\documentclass[11pt,leqno]{amsart}\usepackage{bbm}
\usepackage{mathrsfs}
\usepackage{amssymb}
\usepackage{amsmath}
\usepackage{amsthm}
\usepackage{amsfonts}
\usepackage{color}
\usepackage{graphicx}
\usepackage[active]{srcltx}
\usepackage{CJK}

\usepackage[cp1252]{inputenc}

\usepackage{mathrsfs}
\usepackage{graphicx}

\usepackage[active]{srcltx}

\allowdisplaybreaks

\usepackage{titletoc}


\textwidth 169truemm
\textheight 226truemm
\oddsidemargin -1.0mm
\evensidemargin -1.0mm
\topmargin -10mm
\headsep 6mm
\footskip 11mm
\baselineskip 4.5mm

\def\rr{{\mathbb R}}
\def\rn{{{\rr}^n}}

\def\nn{{\mathbb N}}

\def\fz{\infty}
\def\az{\alpha}

\def\dist{{\mathop\mathrm{\,dist\,}}}
\def\loc{{\mathop\mathrm{\,loc\,}}}

\def\lz{\lambda}
\def\dz{\delta}

\def\ez{\epsilon}

\def\bz{\beta}

\def\gz{{\gamma}}

\def\tz{\theta}
\def\sz{\sigma}

\def\wz{\widetilde}

\def\bint{{\ifinner\rlap{\bf\kern.35em--}
\int\else\rlap{\bf\kern.45em--}\int\fi}\ignorespaces}

\def\bbint{{\ifinner\rlap{\bf\kern.35em--}
\hspace{0.078cm}\int\else\rlap{\bf\kern.45em--}\int\fi}\ignorespaces}

\def\esup{\mathop\mathrm{\,esssup\,}}

\newtheorem{thm}{Theorem}[section]
\newtheorem{lem}[thm]{Lemma}
\newtheorem{rem}[thm]{Remark}
\newtheorem{cor}[thm]{Corollary}
\numberwithin{equation}{section}

\begin{document}

\arraycolsep=1pt

\title[Everywhere differentiability of absolute minimizers]{ Everywhere differentiability of
absolute minimizers \\
for locally strongly convex and concave \\
Hamiltonian $H(p)\in C^0(\rn)$ with $n\ge3$}

\author{ Peng Fa, Qianyun Miao and Yuan Zhou}
\address{ Department of Mathematics, Beihang University, Beijing 100191, P. R. China}
\email{pengfa@buaa.edu.cn }

\address{ School of Mathematical Sciences, Peking University, Beijing 100871, P. R. China}
\email{qianyunm@math.pku.edu.cn}

\address{ Department of Mathematics, Beihang University, Beijing 100191, P. R. China}
\email{yuanzhou@buaa.edu.cn}

\date{\today}
\maketitle

\begin{center}
\begin{minipage}{15cm}\small
{\noindent{\bf Abstract.}\quad
Suppose that
$n\ge3$ and  $H(p)\in C^0(\rr^n)$ is a locally strongly convex and concave Hamiltonian. We obtain the everywhere differentiability of all absolute minimizers for
$H$ in any domain of $\rn$.
}
\end{minipage}
\end{center}

\section{Introduction}
Let $n\ge2$ and suppose  that $H\in C^0(\rn)$ is  convex and coercive (i.e., $\liminf_{p\to\fz}H(p) =\fz$).
Aronsson   1960's initiated   the study of  minimization problems for   the $L^\infty$-functional  $${\mathcal F}_{H}\left(u,\Omega\right)=\esup_{x\in\Omega}H\left(Du (x) \right)   \  \mbox{for any domain $\Omega\subset\rn$ and function $u\in W^{1,\fz}_\loc\left(\Omega\right)$};$$
 see \cite{a1,a2,a3,a4}.
 Given  any  domain $\Omega\subset\rn$, by Aronsson a function $u\in W^{1,\infty}_{\loc}\left(\Omega\right)$  is  called an absolute minimizer  for $H$ in $\Omega$ (write $u\in AM_H\left(\Omega\right)$ for simplicity)  if
$${\mathcal F}_{H}\left(u,V\right)\le {\mathcal F}_{H}\left(v,V\right)\  \mbox{whenever $V\Subset\Omega$, $v\in W^{1,\infty}_{\loc}\left(V\right)\cap C (\overline V )$ and $u=v$ on $\partial V$}.$$
It turns out that the absolute minimizer  is
  the correct notion of minimizers for such  $L^\infty$-functionals.

 The   existence of absolute minimizers for given continuous boundary in bounded domains
 was proved by Aronsson \cite{a3} for $\frac12|p|^2$
  and Barron-Jensen-Wang \cite{bjw} for general $H(p)\in C^0(\rn)$;
   while their  uniqueness was built up by Jensen
\cite{j93}  for $\frac12|p|^2$ (see also \cite{as,bb,cgw}),  
 and by Jensen-Wang-Yu \cite{jwy}  and
 Armstrong-Crandal-Julin-Smart \cite{acjs} for $H(p)\in C^2(\rn)$  and  $H(p)\in C^0(\rn)$, respectively,
 with $H^{-1}(\min H)$ having empty interior.

 Moreover, if $H\in C^1(\rn)$ is  convex and coercive,
  absolute minimizers coincide with viscosity solutions to
   the Aronsson equation (a highly degenerate  nonlinear elliptic equation)
\begin{equation}\label{eq1.2}
\mathscr A_H(u):=\sum_{i,j=1}^n
H_{p_i} \left(Du\right) H_{p_j} \left(Du\right) u_{x_ix_j}   =0\quad\mbox{\rm in}\;\Omega,
\end{equation}
see Jensen \cite{j93}  for $H(p)= \frac12|p|^2$,  and Crandall-Wang-Yu \cite{cwy}   and  Yu  \cite{y06} (and also \cite{acjs, bjw,c03,gwy06,cgw}) in general.
 Here $H_{p_i}=\frac{\partial H} {\partial p_i}$ for $H\in C^1(\rn)$,
 $u_{x_i}=\frac{\partial u} {\partial x_i}$ for $u\in C^1(\rn)$,   and
 $u_{x_ix_j}=\frac{\partial^2 u} {\partial x_i\partial x_j}$  for $u\in C^2(\rn)$.
 For the theory of viscosity solution see \cite{CIL}.
    In the special case $H(p)= \frac12|p|^2$, the Aronsson equation \eqref{eq1.2} is  the $\fz$-Laplace equaiton
   \begin{equation}\label{eq1.2xxx}
\Delta_\fz u:=\sum_{i,j=1}^n
u_{x_i}   u_{x_j}   u_{x_ix_j}     =0\quad\mbox{\rm in}\;\Omega
\end{equation}
  and its viscosity solutions   are called as $\infty$-harmonic functions.
   If $H\in C^0(\rn)$ but  $\not\in C^1(\rn)$, we refer to \cite{cgw,acjs} for further discussions and related problems on the Euler-Lagrange equation for absolute minimizers.

  The regularity of absolute minimizer   is then the main issue in this field.

By Aronsson \cite{a84},  $\fz$-harmonic functions are not necessarily $C^2$-regular;
indeed $\fz$-harmonic function
$x_1^{4/3}-x_2^{4/3}$ in whole $\rn$  is    not $C^2$-regular.
Such a function also leads to a well-known  conjecture on the $C^{1,1/3}$- and $ W^{2,t}_\loc$-regularity  with $1\le t<3/2$ of $\fz$-harmonic functions. A seminar step  towards this is made by   Crandall-Evans  \cite{ce}, who obtained   their linear approximation property. They \cite{ceg} also proved   that all   bounded $\fz$-harmonic functions  in whole $\rn$ with $n\ge2$
 must be  constant functions.

Next, when $n=2$, Savin \cite{s05} established their interior $C^1$-regularity
 and then deduced the corresponding Liouville theorem, that is,
 all  $\fz$-harmonic functions  in whole  plane
  with a linear growth at $\fz$ (that is,  $|u(x)|\le C(1+|x|)$ for all $x\in\rr^2$) must be linear functions.
   Later, the  interior $C^{1,\alpha}$-regularity for some $0<\alpha\ll 1/3$
     was proved by Evans-Savin  \cite{es08} and the   boundary $C^1$-regularity by Wang-Yu \cite{wy12}.
 Recently, Koch-Zhang-Zhou \cite{kzz} proved that $|Du|^\alpha\in W^{1,2}_\loc$ for   all $\alpha>0$ and
all   $\infty$-harmonic functions $u$ in planar domains,
which is sharp as $\alpha\to0$;
  also  that the distributional determinant $-\det D^2u\,dx$ is a nonnegative Radon measure.

 Moreover, when $n\ge 3$, Evans-Smart \cite{es11a,es11b}  obtained their everywhere differentiability;
Miao-Wang-Zhou \cite{mwz2} and Hong-Zhao
\cite{hz} independently observed
an asymptotic Liouville property, that is,  if
 $u$ is a $\fz$-harmonic function   in whole  $\rn$
  with a linear growth at $\fz$, then $\lim_{R\to\fz}\frac1Ru(Rx)=  e\cdot x$ locally uniformly for some vector $e$ with $|e|=\|Du\|_{L^\fz(\rn)}$.
   But $C^1,C^{1,\alpha}$-regularity and the corresponding Liouville theorem of $\infty$-harmonic functions are completely open.


  On the other hand, if $H \in C^2(\rn)$ is locally strongly convex, Wang-Yu \cite{wy} obtained
  the linear approximation property of absolute minimizer, and when $n=2 $,
   the $C^1$-regularity  and hence the corresponding Liouville theorem. In this paper, we say that $H\in C^0(\rr^n)$ is locally strongly convex (resp. concave)
if for any convex subset $U$ of $\rn$, there exists $  \lambda >0$ depending on $U$ (resp. $\Lambda>0$) such that
$$ H(p)-\frac{\lambda}{2}|p|^2 \quad  \mbox{
(resp. }\ \frac{\Lambda |p|^2}{2}-H(p) \mbox{) is convex in $U$}. $$
Note that 
$H\in C^2(\rn)$ implies that
$H$ is always locally strongly concave.  In particular, the $l_\alpha$-norm for $2<\alpha<\fz$ provides a class of typical example  of   locally strongly convex and concave but non-Hilbertian Hamiltonians.

Recently, under the assumptions that $H\in C^0(\rn)$ is convex and coercive, it was shown by Fa-Wang-Zhou \cite{fwz}   that
   $H$ is not a constant in any line segment
if and only if  all absolute minimizers for $H$ have the linear approximation property; moreover, when
  $n=2$,  if and only if  all absolute minimizers for $H$ are $C^1$-regular, and also
if and only if  the corresponding Liouville theorem holds.
In \cite{fmz},
we proved that  if  $H \in C^2(\rr^2)$ is locally strongly convex and concave, then
$H(Du)^\alpha\in W^{1,2}_\loc$ for   all $\alpha>\frac12-\tau_H$
for all   absolute minimizers $u$ in planar domains, where   $0<\tau_H\le \frac12$ and  $\tau_H=1/2$ when $H\in C^2(\rr^2)$; and also  that the distributional determinant $-\det D^2u\,dx$ is a nonnegative Radon measure.
 But, when $n\ge 3$, the  everywhere differentiability,
  $C^1, C^{1,\alpha}$-regularity and the Liouville theorem is not clear.

If $n\ge3$ and
    $H\in C^0(\rr^n)$ is locally strongly convex and  concave,
this paper aims   to prove the following everywhere differentiability (Theorem \ref{Theorem} below) and asymptotic Liouville property (Theorem \ref{Theorem2} below)
 of absolute minimizers.

 \begin{thm}\label{Theorem}
Suppose that  $n\ge3$ and $H\in C^0(\mathbb{R}^n)$ is locally strongly convex and concave.
Let $\Omega\subset\rn$ be any domain. If $u\in AM_H(\Omega)$,
 then $u$ is  differentiable everywhere in $\Omega$.
\end{thm}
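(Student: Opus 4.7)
The plan is to follow the Evans--Smart \cite{es11a,es11b} strategy for $\infty$-harmonic functions in dimension $n \geq 3$, adapted to the locally strongly convex and concave Hamiltonian setting. Fix $x_0 \in \Omega$ and, after subtracting $u(x_0)$, assume $u(x_0) = 0$. The goal is to produce a vector $e = e(x_0) \in \rn$ such that
$$\lim_{r \to 0^+} \sup_{y \in B_r(x_0)} \frac{|u(y) - e \cdot (y - x_0)|}{r} = 0.$$

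The first step is a linear approximation property at $x_0$: extract a sequence $r_k \to 0^+$ and a vector $e$ so that the rescalings $u_{r_k}(x) := u(x_0 + r_k x)/r_k$ converge locally uniformly on $\rn$ to $e \cdot x$. The convexity and coercivity of $H$ give uniform local Lipschitz bounds on $\{u_r\}_{r > 0}$, so the family is $C^0_\loc$-precompact. Any subsequential limit $v$ is a global absolute minimizer on $\rn$ for a Hamiltonian that is still locally strongly convex and concave. A Liouville-type rigidity---in the spirit of Wang--Yu \cite{wy} for $C^2$ strongly convex $H$, and of \cite{fmz} in the plane---should force $v$ to be affine; the upper quadratic control supplied by strong concavity is what lets this pass from $C^2$ to merely $C^0$ Hamiltonians.

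The principal step is to upgrade this subsequential convergence to full differentiability. Consider the one-sided slopes
$$S^{\pm}(x_0, r) := \frac{1}{r} \sup_{y \in \partial B_r(x_0)} \pm \bigl( u(y) - u(x_0) \bigr),$$
which are non-decreasing in $r$ by comparison with $H$-cones (an equivalent formulation of being an absolute minimizer when $H$ is convex and coercive; cf.\ \cite{cgw,acjs}). Hence $L^\pm := \lim_{r \to 0^+} S^\pm(x_0, r)$ exist; by the linear approximation step, $L^+ = L^- = |e|$, which pins down the \emph{magnitude} of the limiting gradient. To also fix its \emph{direction} and obtain control of the full oscillation of $u(\,\cdot\,) - e \cdot (\cdot - x_0)$ on $B_r(x_0)$, I would run a flatness-propagation argument: using $H$-cones as two-sided barriers (with curvature bounds coming from the strong convexity constant $\lambda$ and the strong concavity constant $\Lambda$), show that the blow-up along \emph{every} sequence $r \to 0^+$ produces the same linear limit $e \cdot x$.

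The main obstacle will be the lack of a usable differential form of the Aronsson equation: because $H \in C^0$ only, we cannot invoke $\mathscr A_H(u) = 0$ in the viscosity sense, and every estimate must be derived from the absolute-minimizer / $H$-cone comparison characterization. Unlike the $\infty$-Laplacian case there is no explicit gradient-flow structure to exploit. The strong concavity hypothesis, trivial for $C^2$ Hamiltonians but an essential new assumption in the $C^0$ class, is precisely what supplies the upper quadratic bound on $H$ that makes comparison $H$-cones effective from above and enables quantitative barrier estimates. Coordinating the compactness, linear-approximation, and flatness-propagation steps in this purely variational framework, uniformly in $n \geq 3$, is where I expect the bulk of the work to lie.
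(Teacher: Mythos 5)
Your first step (blow-up compactness plus a linear approximation / Liouville rigidity for the limit) matches the paper's use of the linear approximation property from Fa--Wang--Zhou \cite{fwz} (Lemma 2.1), so that portion of the plan is sound. But the decisive second step---upgrading subsequential linear approximation to uniqueness of the blow-up vector---is where your proposal diverges and, as sketched, would fail. You propose a ``flatness-propagation argument using $H$-cones as two-sided barriers'' inside a ``purely variational framework.'' This is the geometric/variational route of Crandall--Evans and Savin, and the paper explicitly flags that it does not extend to $n\ge 3$: the two-dimensional proofs lean on planar topology (nested level curves, degree arguments) that has no $n$-dimensional analogue. Cone comparison does give monotonicity of the slope functions $S^{\pm}(x_0,r)$ and hence uniqueness of $H(e)$ across all blow-ups (this is already in Lemma 2.1), but it offers no mechanism to rule out two distinct directions $a\ne b$ with $H(a)=H(b)$; that is precisely the open problem your sketch treats as a ``coordination'' issue.

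What the paper actually does is the opposite of avoiding the Aronsson equation: it manufactures a usable differential form of it. The Hamiltonian is smoothed to $H^\gamma$, the absolute minimizer is approximated by $u^\gamma\in AM_{H^\gamma}$ (Lemma 2.2, stability from \cite{fmz}), and $u^\gamma$ is in turn approximated by smooth solutions $u^{\gamma,\ez}$ of the viscous regularization $\mathscr A_{H^\gamma}(v)+\ez\Delta v=0$ (Theorem \ref{ex-una}). The heart of the argument is then the integral flatness estimate of Theorem \ref{Fla}, proved by the Evans--Smart adjoint method: one solves a linear adjoint problem for a Green's function $\Theta^\ez\ge0$ of the linearized operator $L_\ez$, establishes exponential decay of $\Theta^\ez$ where $H(Du^\ez)$ drops below its value at the center, and tests against $|Du^\ez - e_n|^2$ (the choice of test function is itself a novelty, since $H$ has no Hilbert structure). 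All constants are tracked in $\lambda,\Lambda$ only because no third-order derivatives of $H^\gamma$ enter the linearization---this is why the paper uses $\mathscr A_{H^\gamma}(v)+\ez\Delta v=0$ rather than Evans's $e^{H/\ez}$-regularization. The contradiction from two blow-up vectors is then extracted by plugging the flatness estimate into a diagonal limiting argument over $\gamma,\ez,r$. Your sketch correctly names Evans--Smart but then substitutes the variational route for their PDE machinery; the smoothing of $H$, the $\ez$-viscosity term, the adjoint Green's function, and the integral flatness estimate are the content that is missing, and none of it can be replaced by cone comparison alone in $n\ge3$.
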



 \begin{thm}\label{Theorem2}
Suppose that  $n\ge3$ and $H\in C^0(\mathbb{R}^n)$ is locally strongly convex/concave.
If $u\in AM_H(\rn)$ with a linear growth at $\fz$,
then there exists a unique vector $e$ such that
 $$H(e)=\|H(Du)\|_{L^\fz(\rn)}\quad{\rm and }\quad \lim_{R\to\fz} \frac1{R}u(Rx)=e\cdot x\quad\mbox{ locally uniformly in $\rn$}.$$
\end{thm}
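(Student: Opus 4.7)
The strategy is to deduce Theorem~\ref{Theorem2} from the everywhere differentiability established in Theorem~\ref{Theorem} via a blow-down argument, in the spirit of \cite{mwz2,hz} in the $\fz$-harmonic case. First I would use the linear growth of $u$, comparison with affine functions, and coercivity of $H$ to establish a global Lipschitz bound $L:=\|Du\|_{L^\fz(\rn)}<\fz$, and set $M:=\|H(Du)\|_{L^\fz(\rn)}<\fz$. Consider the rescaling family $u_R(x):=R^{-1}(u(Rx)-u(0))$. Since the functional $\mathcal{F}_H(\cdot, V)$ is invariant under the rescaling $v\mapsto R^{-1}v(R\cdot)$, each $u_R\in AM_H(\rn)$ with $u_R(0)=0$ and $\|Du_R\|_{L^\fz(\rn)}\le L$; moreover $\|H(Du_R)\|_{L^\fz(B_r)}=\|H(Du)\|_{L^\fz(B_{Rr})}\nearrow M$ as $R\to\fz$ for every $r>0$. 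By Arzel\`a--Ascoli, extract $R_j\to\fz$ with $u_{R_j}\to u_\fz$ locally uniformly; stability of absolute minimizers under local uniform convergence gives $u_\fz\in AM_H(\rn)$, and by Theorem~\ref{Theorem}, $u_\fz$ is differentiable everywhere. Set $e:=Du_\fz(0)$.

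The key step is to show $u_\fz(x)\equiv e\cdot x$ on $\rn$. The saturation $\sup_{B_r}H(Du_\fz)=M$ on every ball around the origin (inherited from the blow-down), combined with differentiability of $u_\fz$ at $0$ and a Crandall--Evans-type linear approximation argument in the convex/concave setting (adapted from \cite{wy,cgw,acjs}), yields $H(e)=M$. Next, exploiting both the local strong convexity and local strong concavity of $H$, I would establish a rigidity statement: any everywhere-differentiable entire absolute minimizer of $H$ whose gradient energy saturates the maximum value $M$ on every ball around the origin must coincide with its tangent hyperplane there. The strong convexity of $H$ ensures that the supporting hyperplane to $\{H\le M\}$ at $e$ is unique, fixing the gradient direction; the strong concavity supplies matching two-sided Aronsson-cone comparison bounds, which pinch $u_\fz$ between $e\cdot x\pm\varepsilon|x|$ along every ray from the origin for arbitrary $\varepsilon>0$, forcing $u_\fz(x)=e\cdot x$.

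Finally I would show that the vector $e$ does not depend on the chosen subsequence, upgrading subsequential convergence to full convergence. If two subsequences produced distinct vectors $e_1\ne e_2$ (both with $H(e_i)=M$), an interleaving/diagonal argument would yield a further subsequential blow-down limit whose linear form would have gradient equal to both $e_1$ and $e_2$, a contradiction. Therefore $R^{-1}u(Rx)\to e\cdot x$ locally uniformly as $R\to\fz$, with $H(e)=M$ and $e$ unique. The main obstacle is the rigidity step in the previous paragraph: converting the saturation $\sup_{B_r}H(Du_\fz)=M$ together with differentiability at the origin into global linearity of $u_\fz$. This requires both the strong convexity and strong concavity of $H$ in order to produce two-sided comparison estimates compatible with an Aronsson-cone monotonicity adapted from the classical convex coercive setting.
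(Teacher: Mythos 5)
Your blow-down setup (rescaling, Arzel\`a--Ascoli, stability of $AM_H$ under local uniform limits, and appealing to Theorem~\ref{Theorem} for differentiability of the limit) is sound, but the two central steps that carry the real content of Theorem~\ref{Theorem2} are left as unproven ``rigidity'' and ``interleaving'' claims, and neither goes through as stated.

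\emph{Gap in the rigidity step.} Differentiability of $u_\fz$ at the origin gives $|u_\fz(x)-e\cdot x|=o(|x|)$ only as $x\to 0$; the pinching $|u_\fz(x)-e\cdot x|\le\varepsilon|x|$ that you need to force $u_\fz\equiv e\cdot x$ must hold at \emph{all} scales, and nothing in your proposal propagates the tangent-plane estimate outward. Two-sided cone comparison with the saturated slope $M=\|H(Du_\fz)\|_{L^\fz}$ only yields $-\mathscr C^H_M(-x)\le u_\fz(x)\le\mathscr C^H_M(x)$, which is far from linearity; and neither strong convexity nor strong concavity of $H$ supplies a second-order pinching at a fixed base point without some quantitative flatness input. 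You flag this as ``the main obstacle'' but do not supply a mechanism. In the paper this step is unnecessary: Lemma~\ref{llafz} (from \cite{fwz}) already states that \emph{every} subsequential blow-down limit of $u$ is a linear function $e\cdot y$ with $H(e)=\|H(Du)\|_{L^\fz(\rn)}$, so the only remaining issue is uniqueness of the vector~$e$, i.e. that $\mathscr D(\fz)$ is a singleton.

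\emph{Gap in the uniqueness step.} The ``interleaving/diagonal'' argument does not produce a contradiction: alternating two sequences converging to $e_1\cdot x$ and $e_2\cdot x$ just yields a non-convergent sequence of rescalings; there is no further subsequential limit ``whose linear form would have gradient equal to both''. Ruling out two distinct vectors $a\ne b\in\mathscr D(\fz)$ is exactly where the flatness estimate, Theorem~\ref{Fla}, together with the smooth approximants $u^{\gz,\ez}$ from Theorem~\ref{ex-una}, is indispensable. The paper's proof of Theorem~\ref{Theorem2} runs the Steps~1--4 of the proof of Theorem~\ref{Theorem} with $r_j\to\fz$ instead of $r_j\to 0$: normalize $a=e_n$, pick a scale $s_k$ (large, from the $b$-sequence) then a much larger scale $r_j\gg s_k$ (from the $a$-sequence), so that the rescaled $u_j$ is $\tau$-close to $x_n$ on $B(0,3)$ while on the small ball $B(0,s_k/r_j)$ its $(\gz,\ez)$-approximant is $\eta$-close to $b\cdot x$; then Theorem~\ref{Fla} forces $Du^{\gz,\ez}_j$ at a common point to be simultaneously close to $e_n$ and to $b$, a contradiction since $|b-e_n|=\theta>0$.

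In short, rather than attempting to prove global linearity of a generic blow-down limit via a rigidity/comparison argument (which you do not establish), you should cite the linear approximation property at infinity, Lemma~\ref{llafz}, to obtain linearity directly, and then repeat the flatness-estimate contradiction argument from the proof of Theorem~\ref{Theorem} to show $\mathscr D(\fz)$ is a singleton.
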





%

When $n\ge3$, it is unclear to us  whether   the assumption for $H$ in Theorems \ref{Theorem}\&\ref{Theorem2} can be relaxed to  the weaker (and also necessary in some sense) assumption that $H\in C^0(\rn)$ is convex and coercive and  is not a constant in any line segment. By  \cite{fwz}, if $H\in C^0(\rn)$ is convex and coercive,  and is constant in some line-segment, then both of Theorems \ref{Theorem}\&\ref{Theorem2} are not necessarily true.

In particular,  it would be interesting to prove the everywhere differentiability of
 absolute minimizer for   $l_\alpha$-norm with  $1<\alpha<2$.
 Recall that if $2<\alpha<\fz$, then
 $l_\alpha$-norm  belongs to  $C^2(\rn)$ and is convex, and hence both of the conclusions of
 Theorem \ref{Theorem}\&\ref{Theorem2} holds.
  If $\alpha=1 $ or $\fz$,  the $l_\alpha$-norm will be constant in some line-segment.

By Remark \ref{r1.3} below, we only need to prove Theorems \ref{Theorem}\&\ref{Theorem2} when $H\in C^0(\rn)$ satisfies
  \begin{enumerate}
   \item[(H1)]  $H$ is   strongly convex and concave in $\rr^n$, that is, there exist $0<\lambda\le \Lambda<\fz$ such that $$\mbox{ both of $ H(p)-\frac{\lz}2|p|^2$ and  $\frac\Lambda{2 }|p|^2- H(p)$ are convex in $\rn$.}$$
    \item[(H2)]  $H(0)=\min_{p\in \rn}H(p)=0$.
   \end{enumerate}

\begin{rem}\label{r1.3}\rm  Suppose that  $H\in C^0(\mathbb{R}^n)$ is locally strongly convex and concave.

(i)
If   $u\in AM_H(\Omega)$ for some domain $\Omega\subset\rn$,
letting $U\Subset\Omega$ be arbitrary subdomain,
  we have $k  =\| Du \|_{L^\fz(U)}<\fz$.
Next, by \cite[Lemma A.8]{fmz}, there exists a $\wz H$ which is
     strongly convex/concave in $\rr^n$ and $\wz H= H$ in $B(0,k+1)$.
     Thus $u\in   AM_{\wz H}(U)$.
The strongly convexity of $\wz H$ implies that
 there exists a $p_0\in\rn$ such that $\min_{p\in\rn}\wz H(p)=H(p_0)$.
Set  $\bar H(p)= \wz H(p+p_0)-\wz H(p_0) $ for $p\in\rn$. Then $\bar H$
 satisfies (H1)\&(H2).
Write $\bar u(x)=u(x)-p_0\cdot x$ for all $x\in U$.
We have $\bar u  \in AM_{\bar H} (U ) $.
 Since $u$ and $\bar u$ have the same regularity in $U$,
 we only need to prove the everywhere differentiability of $\bar u$ in $U$.

 (ii) If
  $u\in AM_H(\rn)$ has a linear growth at $\fz$, then by \cite{fwz} we have  $k:=\|Du\|_{L^\fz(\rn)}<\fz$.
  Let $\bar u$ and $\bar H$ as above. Then $u$ is linear if and only if $\bar u$ is linear.
  So we only need to prove $\bar u$ is linear.
 \end{rem}

Unless other specifying, we always assume that $H\in C^0(\rn)$ satisfies (H1)\&(H2) below.
Note that the geometric\&variational approach used in dimension 2 (see Savin \cite{s05} and also \cite{fwz,wy}) is not enough to prove Theorems \ref{Theorem}\&\ref{Theorem2},
since it includes a key planar topological argument. Moreover, since
$H\in C^0(\rn)$ does not have Hilbert structure necessarily,
it is not clear whether one can prove Theorem \ref{Theorem} by using the idea of Evans-Smart \cite{es11a}---a PDE approach based on maximal principle (see also Remark \ref{rem} (ii)).
But,  in Section 2, we are able to prove Theorems \ref{Theorem}\&\ref{Theorem2} by borrowing some idea of Evans-Smart \cite{es11b}---a PDE approach based on an adjoint argument, and using  the following crucial ingredients:
  \begin{enumerate}
 \item[(a)] the linear approximation property of any given absolute minimizer $u$ for $H$
 as obtained in Fa-Wang-Zhou \cite{fwz} and Wang-Yu \cite{wy} (see Lemmas 2.1\&2.5).

 \item[(b)] a stability result  in \cite{fmz} (see Lemma 2.2)
 which allows to approximate $u$   via absolute minimizers $u^\gz$ of a Hamiltonian $H^\gz$, where
  $H^\gz$ is a smooth approximation of  $H$  and satisfies (H1)\&(H2) with the same constants $\lz, \Lambda$.

   \item[(c)] a  uniform approximation  to  $u^\gz$  via smooth functions $u^{\gz,\ez}$ (see Theorem \ref{ex-una}),  which is an appropriate modification of Evans' approximation via $e^{\frac1\ez H^\gz}$-harmonic functions in  \cite{e03}.
       The point is that none of $k\ge3$ -order derivatives of $H^\gz$ is involved in
       the linearization of the  equation \eqref{apeq2} for $u^{\gz,\ez}$.

 \item[(d)] an integral flatness estimate for $u^{\gz,\ez}$ (see Theorem \ref{Fla}).
\end{enumerate}

Theorem \ref{ex-una} will be proved in Section 3.
 The novelty in the proof of Theorem \ref{ex-una} is that  we use  viscosity  solutions to certain Hamilton-Jacobi equation as barrier functions to get a boundary regularity of $u^{\gz,\ez}$ and then conclude the uniform approximation of $u^{\gz,\ez}$ to $u^\gz$.
 The reason to use $u^{\gz,\ez}$ instead of $e^{\frac1\ez H^\gz}$-harmonic functions
 is that the linearization of $e^{\frac1\ez H^\gz}$-harmonic equation contains $3$-order derivatives of $H^\gz$; see Remark \ref{rem} (i) for details.

  Theorem \ref{Fla} will be proved in Section 5.
To this end, we generalize in Section 4 the adjoint arguments of \cite{es11b} to Hamiltonian $H^\gz$ and $u^{\gz,\ez}$.
Since none of $k\ge3$ -order derivatives of $H^\gz$ is involved in
       the equation for $u^{\gz,\ez}$, all key estimates in Theorem \ref{ex-una} and Section 4
       rely only on $\lz$ and $\Lambda$. This is indeed important to get Theorem \ref{Fla}.
Moreover, since  $H\in C^0(\rn)$ does not have Hilbert structure in general,
some new ideas are needed to get Theorem \ref{Fla} in Section 5; in particular, the test function used in the proof of flatness estimates in \cite{es11b}  is not enough to us,
as an another novelty we find a suitable test function and build up some related estimates.

\section{Proofs of Theorems \ref{Theorem}\&\ref{Theorem2}}

%
%

Considering Remark \ref{r1.3}, we always assume  that  $H\in C^0(\rn)$ satisfies (H1)\&(H2).
To prove Theorem \ref{Theorem}, let $\Omega$ be any domain of $\rn$, and    $u\in AM_H(\Omega) $.
We recall    the following linear
    approximation property of $u$ as established by   \cite{fwz}.
  \begin{lem} \label{lla} For   any $x\in \Omega$ and any sequence $\{r_j\}_{j\in\nn}$ which converges to $0$,
there exist a subsequence $
  \{r_{j_k}\}_{k\in\nn}$ and a vector $e_{\{r_{j_k}\}_{k\in\nn}}$ such that    $$\lim_{k\to\fz}\sup_{y\in B\left(0,1\right)}\left|\frac{u (x+r_{j_k}y )-u\left(x\right)}{r_{j_k}} -  e_{\{r_{j_k}\}_{k\in\nn}}\cdot y \right|=0 $$
  and $$H (e_{\{r_{j_k}\}_{k\in\nn}}  ) =\lim_{r\to0 } \|H\left(Du\right)\|_{L^\fz\left(B\left(x,r\right)\right)}.$$
 \end{lem}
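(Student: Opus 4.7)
The plan is a standard blow-up argument: extract a subsequential limit $v$ of the rescalings of $u$ at $x$, verify that $v$ is an absolute minimizer for $H$ on all of $\rn$ whose Aronsson functional realizes the monotone limit $L:=\lim_{r\to 0^+}\|H(Du)\|_{L^\fz(B(x,r))}$, and then invoke a rigidity result to conclude that $v$ is affine.

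First, set $u_j(y):=r_j^{-1}[u(x+r_j y)-u(x)]$. Since $u\in AM_H(\Omega)\subset W^{1,\fz}_\loc(\Omega)$, for every $R>0$ the functions $u_j$ are equi-Lipschitz on $B(0,R)$ once $r_j R<\dist(x,\partial\Omega)$, with Lipschitz constant bounded by $\|Du\|_{L^\fz(B(x,r_1 R))}$. A diagonal Arzela--Ascoli extraction yields a subsequence $u_{j_k}\to v$ locally uniformly on $\rn$, with $v$ Lipschitz and $v(0)=0$. Direct rescaling shows $u_j\in AM_H(B(0,R_j))$ with $R_j:=\dist(x,\partial\Omega)/r_j-1\to\fz$; the standard stability of absolute minimizers under locally uniform convergence (valid because $H$ is continuous and the $u_j$ carry uniform gradient bounds) then yields $v\in AM_H(\rn)$.

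Next, pin down the size of $H(Dv)$. The map $r\mapsto\|H(Du)\|_{L^\fz(B(x,r))}$ is nondecreasing, so the limit $L$ exists. Rescaling gives $\|H(Du_{j_k})\|_{L^\fz(B(0,R))}=\|H(Du)\|_{L^\fz(B(x,r_{j_k}R))}\to L$ for every fixed $R>0$, and weak-$\ast$ lower semicontinuity under $Du_{j_k}\rightharpoonup^\ast Dv$ gives $\|H(Dv)\|_{L^\fz(B(0,R))}\le L$, hence $\|H(Dv)\|_{L^\fz(\rn)}\le L$. Combining this with the monotonicity/tightness relations available for an absolute minimizer forces $\|H(Dv)\|_{L^\fz(B(0,R))}=L$ for every $R>0$; that is, $v$ is a tight absolute minimizer on $\rn$ at level $L$.

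The main obstacle is the final rigidity step: showing that such a tight absolute minimizer $v$ on $\rn$ must be affine, $v(y)=e\cdot y$, with $H(e)=L$. Because $H$ is (locally) strongly convex and concave, in particular not constant on any line segment, this is precisely the setting in which \cite{fwz} establishes linearity of tight absolute minimizers on $\rn$ via a comparison-with-linear-functions argument along segments in the sublevel set $\{H\le L\}$. I would invoke that rigidity to conclude $v(y)=e\cdot y$; the identification $H(e)=L$ then follows immediately from $\|H(Dv)\|_{L^\fz(\rn)}=L$, and the uniform convergence $u_{j_k}\to v$ on $B(0,1)$ gives the asserted linear approximation.
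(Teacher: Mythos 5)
The paper does not prove this lemma; it is quoted directly from Fa--Wang--Zhou \cite{fwz} (the $\fz$-Laplacian prototype being Crandall--Evans \cite{ce}), so there is no internal proof to compare against. Your blow-up skeleton --- rescale, extract a locally uniform limit $v$, use stability to get $v\in AM_H(\rn)$, then argue $v$ is affine --- is the right shape for the argument in \cite{fwz}, but two of the steps that carry the real content are left hanging.

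The claim $\|H(Dv)\|_{L^\fz(B(0,R))}=L$ for all $R>0$ does not follow from weak-$\ast$ lower semicontinuity, which only yields the inequality $\le L$; your phrase ``monotonicity/tightness relations available for an absolute minimizer'' names the needed ingredient without producing it. The actual mechanism is comparison with generalized cones $\mathscr C^H_\sz$ (cf.\ \cite{gwy06}, \cite{acjs}): the associated cone-slope quantity is monotone in $r$, it passes to the blow-up under locally uniform convergence of the $u_j$ (unlike $L^\fz$-norms of gradients), and it produces not just the energy equality but also a direction $e$ along which $v$ is exactly linear on the whole line $\{te:t\in\rr\}$. That line of linearity is the one thing your sketch never obtains, and it is precisely what makes the final rigidity step work: the rigidity you cite is \emph{not} a generic Liouville theorem for entire absolute minimizers (which is open for $n\ge3$ even when $H(p)=\frac12|p|^2$), but the elementary pinch for the tight blow-up: once $v$ satisfies the $\mathscr C^H_L$-Lipschitz bound and is linear along $\{te:t\in\rr\}$, sending $t\to\pm\fz$ in the cone inequality squeezes out $v(y)=e\cdot y$. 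Without the linear line the pinch has nothing to close on, so invoking \cite{fwz} for ``linearity of tight absolute minimizers on $\rn$'' at that stage is effectively citing the lemma you are trying to prove.
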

For each $x\in\Omega,$ denote by $\mathscr D u(x)$   the collection of all possible vector $e_{\{r_{j_k}\}_{k\in\nn}}$ as above.
Observe that  $u$ is      differentiable    at $x $  if and only if
 $\mathscr D u(x)$ is a singleton; in this case $\mathscr D u(x)=\{Du(x)\}$.

To see that  $\mathscr D u(x)$ is a singleton,
   we need the following approximation to $u$   given in \cite{fmz}.
  Precisely, let $\{H^\gamma\}_{\gamma\in(0,1]}$ be a standard smooth
approximation to $H$  as below.
 For each  $\gz\in(0,1]$, let $\wz H^\gz=\eta_\gz\ast H$, where $\eta_\gz$ is standard smooth mollifier. Since $H^\gz$ is strictly convex  there exists a unique point $p^\gz\in \rr^2$ such that
 $\wz H^\gz(p^\gz)= \min_{p\in\rr^2} \wz H^\gz(p) .$
Set
\begin{equation} \label{eq7.1} H^\gz(p)=\wz  H^\gz(p+p^\gz)-\wz H^\gz(p^\gz)\quad\forall p\in\rn.\end{equation}
 Obviously, $H^\gamma$ satisfies (H2); by \cite[Appendix A]{fmz},
 $\{H^\gamma\}_{\gamma\in(0,1]}$ satisfies (H1) with the same $\lz$ and $\Lambda$,  and
$ H^\gamma\to H$  locally uniformly as $\gz\to0$.
For each $\gz\in(0,1]$ and $U\Subset\Omega$, let
$$\mbox{$u^\gz\in AM_{H^\gz}(U)\cap C^0(\overline U)$ with $u^\gz=u$ on $\partial U$. }$$
We then have the  following result; see
 \cite{fmz} for $n=2$  and note that the proofs  in \cite{fmz} also works for $n\ge3$.
\begin{lem}\label{abso}
We have
$$ \|H^\gz(Du^\gz)\|_{L^\fz (  U)}\le C \Lambda \|u^\gz\|_{C^{0,1}(\partial U)}\quad\forall \gz\in(0,1],$$
and  $u^\gz\to u$ in $C^{0}(U)$ as $\gz\to0$.
\end{lem}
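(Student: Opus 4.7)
The plan is to establish the lemma in two pieces: the Lipschitz / Hamiltonian bound for each $u^\gz$, and the uniform convergence $u^\gz\to u$ as $\gz\to 0$.

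For the first assertion, the crucial point is that $H^\gz$ satisfies (H1)\&(H2) with constants $\lz,\Lambda$ independent of $\gz$ (as already noted in the paragraph preceding the lemma). In particular, $H^\gz$ is strictly convex with $H^\gz(0)=\min_{\rn}H^\gz=0$, so every affine function is an absolute minimizer for $H^\gz$ on every subdomain. Comparing $u^\gz$ against affine competitors on small balls $B(x,r)\Subset U$, via the standard Lipschitz-preservation / comparison-with-cones argument for absolute minimizers of convex coercive Hamiltonians, yields
\[
\|Du^\gz\|_{L^\fz(U)} \ls \|u^\gz\|_{C^{0,1}(\partial U)}.
\]
Combining this with the upper bound $H^\gz(p)\le \tfrac{\Lambda}{2}|p|^2$ coming from (H1) then delivers the stated $L^\fz$-estimate on $H^\gz(Du^\gz)$.

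For convergence, the uniform Lipschitz bound together with $u^\gz|_{\partial U}=u|_{\partial U}$ shows that $\{u^\gz\}_{\gz\in(0,1]}$ is equi-Lipschitz and equibounded on $\overline{U}$. Arzel\`a--Ascoli extracts from any sequence $\gz_k\to 0$ a uniformly convergent subsequence with limit $v\in C(\overline{U})$ satisfying $v=u$ on $\partial U$. I would then invoke stability of absolute minimizers under locally uniform perturbation: because $H^{\gz_k}\to H$ locally uniformly and $u^{\gz_k}\to v$ uniformly, the usual competitor-correction argument (replacing the boundary trace of a candidate $w$ by $u^{\gz_k}$ and passing to the limit in $\mathcal F_{H^{\gz_k}}(u^{\gz_k},V)\le \mathcal F_{H^{\gz_k}}(w_k,V)$) gives $v\in AM_H(U)$. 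Conditions (H1)\&(H2) force $H^{-1}(\min H)=\{0\}$ to have empty interior, so the uniqueness theorem of Armstrong--Crandall--Julin--Smart \cite{acjs} yields $v=u$ on $U$; since every subsequence has the same limit, the full family converges.

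The main obstacle is keeping the Lipschitz estimate of the first step uniform in $\gz$, with constants depending only on $\lz$ and $\Lambda$. Because $H^\gz$ is merely continuous with quadratic two-sided control from (H1), smooth elliptic PDE tools are unavailable, and the variational / comparison-with-cones viewpoint is essential. Crucially, the strong convexity/concavity constants $\lz,\Lambda$ are preserved under the mollification defining $H^\gz$, as already cited from \cite[Appendix A]{fmz}, so the estimates do not degenerate as $\gz\to 0$.
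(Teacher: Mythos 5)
The paper does not actually prove this lemma: it is cited to \cite{fmz} with the remark that the planar proof carries over. So there is no in-text argument to compare against, but your plan (uniform gradient bound; Arzel\`a--Ascoli; stability; uniqueness from \cite{acjs}) is the standard one and is certainly the intended route. Two steps, however, are stated in a way that does not quite close.

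For the first assertion, "comparing $u^\gz$ against affine competitors on small balls $B(x,r)\Subset U$" does not yield a Lipschitz bound that is uniform up to $\partial U$: local cone/affine comparison on an interior ball controls $|Du^\gz(x)|$ by $\osc_{B(x,r)}u^\gz/r$, which degenerates as $x\to\partial U$ and never sees the boundary regularity of the data. The mechanism you actually need is the \emph{global} minimization property of the Barron--Jensen--Wang solution: the unique $u^\gz\in AM_{H^\gz}(U)\cap C^0(\overline U)$ with $u^\gz=u$ on $\partial U$ satisfies $\cf_{H^\gz}(u^\gz,U)\le \cf_{H^\gz}(w,U)$ for every $w\in W^{1,\fz}(U)\cap C(\overline U)$ with $w=u$ on $\partial U$. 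Taking $w$ to be a McShane extension of $u|_{\partial U}$ and using $H^\gz(p)\le \tfrac{\Lambda}{2}|p|^2$ gives the stated bound (and then the reverse inequality $H^\gz(p)\ge \tfrac\lz2|p|^2$ converts it into the uniform Lipschitz bound needed for compactness).

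For the stability step, the ``competitor-correction'' as you describe it --- replacing the boundary trace of $w$ by $u^{\gz_k}$ and passing to the limit in $\cf_{H^{\gz_k}}(u^{\gz_k},V)\le\cf_{H^{\gz_k}}(w_k,V)$ --- does not go through for the $L^\fz$-functional. Uniform convergence $u^{\gz_k}\to v$ gives no control on $Du^{\gz_k}-Dv$, and in the transition zone where $w_k$ is glued to $u^{\gz_k}$ the gradient picks up a term of size $|D\eta_k|\,|w-u^{\gz_k}|$ that cannot be made small uniformly; unlike $L^p$ energies, shrinking the transition annulus does not help because $\esssup$ is insensitive to the measure of the bad set. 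The clean way to get $v\in AM_H(U)$ is through the comparison-with-cones characterization (Gariepy--Wang--Yu; Armstrong--Crandall--Julin--Smart): each $u^{\gz_k}$ satisfies CICA/CICB for $H^{\gz_k}$, the generalized cones $\mathscr C^{H^{\gz_k}}_\sz$ converge locally uniformly to $\mathscr C^{H}_\sz$ since $H^{\gz_k}\to H$ locally uniformly, and these one-sided cone inequalities pass to the uniform limit. After that, the uniqueness argument you give is correct, and the subsequence principle finishes the proof as you say.
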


Next,
for  any $ \gz\in(0,1]$,  to approximate $u^\gz$ in a smooth way we consider   the following Dirichlet problem:
\begin{equation}\label{apeq2}
\mathscr{A}_{H^{\gamma}} (v)+\ez \Delta v=0\quad {\rm in \ }\
U;\ v=u^\gz\ {\rm on} \ \partial U.
\end{equation}
The following result is proved in Section 3.
\begin{thm}\label{ex-una}
 For each $\ez,\gz\in(0,1]$, there exists a unique solution
  $u^{\gz,\ez} \in C^\fz(U)\cap C^{0}(\overline U)$ to \eqref{apeq2}. Moreover, the following hold.
  \begin{enumerate}
\item[(i)] We have
$$ \|u^{\gz,\ez}\|_{C^0(\overline U)}\le \|u^{\gz}\|_{C^0(\partial U)}\quad\forall  \ez\in(0,1].
$$

\item[(ii)]  We have
$$ \|Du^{\gz,\ez}\|_{C^0(\overline V)}\le C_0(\lambda,\Lambda,{\rm dist}(V,\partial U) ,\|u^{\gz}\|_{C^0(\partial U)} )
  \quad\mbox{ for any $V\Subset U$ and $  \ez\in(0,1]$,}
$$
 where the constant $C_0(\lambda,\Lambda,{\rm dist}(V,\partial U) ,\|u^{\gz}\|_{C^0(\partial U)} )$   depends only on $\lz$, $\Lambda$, ${\rm dist}(V,\partial U)$ and
$\|u^{\gz}\|_{C^0(\partial U)}$.
 \item[(iii)] There exist  $\ez_\ast>0$ and $C_\ast>0$ depending on  $H^\gz$ and $ \|u^\gz\|_{C^{0,1}(\overline U)} $ such that
for any $\ez\in (0,\ez_\ast)$, we have
$$
|u^{\gz,\ez}(x)-u^{\gz}(x_0)|\le C_\ast |x-x_0|
\quad \forall x\in U,\ x_0\in \partial U.
$$
\item[(iv)]  We have $u^{\gz,\ez} \to u^\gz$  in $C^0(\overline U)$ as $\ez\to 0$.
\end{enumerate}
  %

  \end{thm}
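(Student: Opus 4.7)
The plan is to exploit that \eqref{apeq2}, rewritten as
$\sum_{i,j}(\ez\delta_{ij}+H^\gz_{p_i}(Dv)H^\gz_{p_j}(Dv))v_{x_ix_j}=0$,
is uniformly elliptic with ellipticity constants $[\ez,\ez+|DH^\gz(Dv)|^2]$ and smooth quasilinear structure in $Dv$. The strategy is to first establish the estimates in (i)--(iii) a priori for any smooth solution, and then invoke standard quasilinear elliptic theory (Leray--Schauder plus Schauder estimates) to conclude existence of $u^{\gz,\ez}\in C^\fz(U)\cap C^0(\overline U)$; uniqueness follows from the classical comparison principle. Part (i) is then immediate: constants solve \eqref{apeq2}, so comparison of $u^{\gz,\ez}$ with $\pm\|u^\gz\|_{C^0(\partial U)}$ yields the sup-norm bound.

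For the boundary Lipschitz estimate (iii), I would construct barriers at each $x_0\in\partial U$ from viscosity solutions of the eikonal equation. Fix $M=M(\|u^\gz\|_{C^{0,1}(\overline U)})$ so large that $\{H^\gz\le M\}$ contains the closed ball $B(0,\|u^\gz\|_{C^{0,1}(\overline U)})$, and let $w$ be the Hopf--Lax generalized ``cone'' with $H^\gz(Dw)=M$ in $\rn\setminus\{x_0\}$ and $w(x_0)=0$. Then $w(x)\ge\|u^\gz\|_{C^{0,1}(\overline U)}|x-x_0|$ on $\partial U$, so $u^\gz(x_0)+w\ge u^\gz$ on $\partial U$. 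Wherever $w$ is smooth, the identity $H^\gz(Dw)\equiv M$ forces $\mathscr{A}_{H^\gz}(w)=0$, and hence
$\mathscr{A}_{H^\gz}(u^\gz(x_0)+w)+\ez\Delta(u^\gz(x_0)+w)=\ez\Delta w.$
One then perturbs $w$ by an $\ez$-small correction (e.g.\ subtracting $c|x-x_0|^2$, or adding a second eikonal cone with slightly larger $M$) to absorb the $\ez\Delta w$ defect into a true supersolution inequality, while keeping the modified barrier $\phi^+$ above $u^\gz$ on $\partial U$ and $C_\ast$-Lipschitz at $x_0$. Comparison with $u^{\gz,\ez}$ yields $u^{\gz,\ez}(x)-u^\gz(x_0)\le C_\ast|x-x_0|$, and the strong concavity of $H^\gz$ produces a symmetric subsolution barrier for the reverse inequality. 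The threshold $\ez_\ast$ is precisely what keeps this correction from spoiling either the supersolution inequality or the Lipschitz bound.

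For the interior estimate (ii), I would run a Bernstein-type argument. Differentiating \eqref{apeq2} in $x_k$ produces a linear equation for $v_{x_k}$ whose coefficients involve only first and second derivatives of $H^\gz$ (the observation flagged in the introduction that no third or higher derivatives of $H^\gz$ appear in the linearization of \eqref{apeq2}). Testing the maximum principle against $\phi=\zeta^2|Dv|^2+\psi(v)$ for a smooth cutoff $\zeta\equiv 1$ on $V$ supported in $U$ and an auxiliary function $\psi$ chosen to absorb cross terms, and using (i) for the $L^\fz$ control of $v$, yields
$\sup_V|Du^{\gz,\ez}|\le C_0(\lz,\Lambda,\dist(V,\partial U),\|u^\gz\|_{C^0(\partial U)}).$
That the constant depends on $H^\gz$ only through $\lz,\Lambda$ is enforced by (H1): strong convexity lower-bounds and strong concavity upper-bounds the Hessian of $H^\gz$ controlling the linearized operator.

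Finally for (iv), the uniform bounds (i), (ii), (iii) give that $\{u^{\gz,\ez}\}_{\ez<\ez_\ast}$ is uniformly bounded in $C^0(\overline U)$, equicontinuous at every boundary point with a uniform modulus, and locally equi-Lipschitz in $U$; hence Arzel\`a--Ascoli extracts a $C^0(\overline U)$ subsequential limit $u^\ast$ with $u^\ast=u^\gz$ on $\partial U$. Passing \eqref{apeq2} to the viscosity limit (the term $\ez\Delta v$ drops) shows $\mathscr{A}_{H^\gz}(u^\ast)=0$ in the viscosity sense; since $H^\gz\in C^\fz$ is strictly convex, the uniqueness of absolute minimizers (\cite{jwy,acjs}) forces $u^\ast=u^\gz$, and subsequence-independence upgrades the convergence to the whole family. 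The main obstacle is clearly (iii): producing a supersolution of the \emph{perturbed} Aronsson equation that dominates $u^\gz$ on $\partial U$, touches it at $x_0$, and has Lipschitz constant controlled only by $\|u^\gz\|_{C^{0,1}(\overline U)}$ and the structural constants $\lz,\Lambda$---balancing the $\ez\Delta w$ defect of a pure eikonal cone against the required supersolution inequality is what dictates both $\ez_\ast$ and $C_\ast$.
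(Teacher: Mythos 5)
Your handling of existence, uniqueness, part (i), and the Bernstein maximum-principle argument for (ii) with test function $\zeta^2|Dv|^2+\psi(v)$ is essentially the paper's proof (which takes $\psi(v)=\alpha v^2$); the observation that no third-order derivatives of $H^\gz$ enter the linearized operator $L_\ez$ is indeed what makes $C_0$ depend only on $\lz,\Lambda$, $\dist(V,\partial U)$ and $\|u^\gz\|_{C^0(\partial U)}$. Part (iv) via Arzel\`a--Ascoli, passing to the viscosity limit, and uniqueness of $u^\gz$ from \cite{acjs} is also the paper's route.

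The gap is in (iii), and it is not cosmetic. Your barrier $w=\mathscr C^{H^\gz}_M(\cdot-x_0):=\max_{H^\gz(p)=M}p\cdot(\cdot-x_0)$ satisfies $\mathscr A_{H^\gz}(w)=0$ exactly where smooth, so the entire burden of the supersolution inequality falls on $\ez\Delta w$. But $w$ is a maximum of linear functions, hence convex, and it is positively $1$-homogeneous about $x_0$; therefore $\Delta w>0$ with $\Delta w\sim|x-x_0|^{-1}$, which blows up as $x\to x_0\in\partial U$. Thus $\mathscr A_{H^\gz}(u^\gz(x_0)+w)+\ez\Delta(u^\gz(x_0)+w)=\ez\Delta w$ is strictly \emph{positive} and \emph{unbounded} near the touch point --- the wrong sign with no reserve to trade against, and no smallness of $\ez$ helps because the obstruction is in $x$. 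None of your proposed corrections repair this while staying uniformly Lipschitz at $x_0$, which you need in order to land the bound $|u^{\gz,\ez}(x)-u^\gz(x_0)|\le C_\ast|x-x_0|$: subtracting $c|x-x_0|^2$ adds only the bounded quantity $-2nc$ to the Laplacian, a correction strong enough to dominate $|x-x_0|^{-1}$ necessarily has an unbounded gradient at $x_0$, and adding a second generalized cone with larger $M$ is again convex and worsens the sign. The paper resolves this by replacing the pure eikonal cone with the \emph{discounted} Hopf--Lax cost $\mathcal L^\dz_\sz(x_0,\cdot)$ of Lemmas~\ref{distance0}--\ref{com-pr1}: the discount $\dz$ buys a strict Aronsson defect, $\mathscr A_{H^\gz}(\mathcal L^\dz_\sz(x_0,\cdot))\le-\tfrac{\dz\sz}{2}$ in the viscosity sense (Lemma~\ref{distance0}(iii)), and simultaneously makes $\mathcal L^\dz_\sz$ semiconcave, $D^2\mathcal L^\dz_\sz\le\mu_1 I$ distributionally, so that $\ez\Delta\mathcal L^\dz_\sz\le\ez n\mu_1$ uniformly (Lemma~\ref{com-pr1}). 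Choosing $\ez_\ast$ with $\ez n\mu_1<\tfrac{\dz\sz}{2}$ then yields a genuine supersolution of \eqref{apeq2}, while Lemma~\ref{distance0}(i)--(ii) together with $\sz>8\Lambda\|u^\gz\|^2_{C^{0,1}(\overline U)}$ pin $\mathcal L^\dz_\sz$ between Lipschitz cones and keep it above $u^\gz-u^\gz(x_0)$ on $\partial U$. That built-in strict defect --- not an $\ez$-small perturbation of a pure eikonal cone --- is the missing ingredient in your proposal.
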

  The existence and uniqueness of $u^{\gz,\ez}$, and also
  Theorem \ref{ex-una} (i) follow  from the classical elliptic theory;
   Theorem \ref{ex-una} (iv) from Theorem \ref{ex-una} (ii) and (iii).
   Theorem \ref{ex-una} (ii) follows from the approach by \cite{es11a} based on  the maximal principle  and the linearized operator arising from \eqref{apeq2}:
  \begin{equation}\label{lin-op1}
 -
H_{p_i}(Du^{\gz,\ez})H_{p_j}(Du^{\gz,\ez})v_{x_ix_j}-2H_{p_ip_l}(Du^{\gz,\ez})u^{\gz,\ez}_{x_ix_j}H_{p_j}(Du^{\gz,\ez})
v_{x_l}-\ez \Delta v.
\end{equation}
  Since   none of $k\ge 3$ order  derivatives of $H$ is involved in \eqref{lin-op1}, we will conclude that
     the constant $C_0$ in Theorem \ref{ex-una}  (ii)
     depends at most on $\lz$, $\Lambda$ ${\rm dist}(V,\partial U) $ and $\|u^{\gz}\|_{C^0(\partial U)}$.
          To get  Theorem \ref{ex-una} (iii), we need new ideas.
    Indeed, unlike the case  $H(p)=\frac12|p|^2$, where we use
     $|x|^\gz$ as a barrier function to conclude Theorem \ref{ex-una} (iii) from the comparison principle,
      the novelty here is that due to we take viscosity solutions $\mathcal{L}^{\dz}_{\sigma}$
      of certain Hamilton-Jacobi equation as barrier functions; see Lemmas 3.1-3.2.

In Section 5, we establish the  following flatness estimate
of $u^{\gz,\ez}$, which is  is crucial to show that $\mathscr Du(x)$ is a singleton.
Denote by $e_n$    the vector $(0,\cdots,0,1)$.

  \begin{thm}\label{Fla}
Suppose that $U=B(0,3)$ and for some $\gz,\ez\in(0,1]$, $u^{\gz,\ez}$ satisfies
\begin{equation}\label{cond1}
\max_{B(0,3)}|u^{\gz,\ez}-  x_n|\le \tau
\end{equation}
for some $0<\tau<1$  and
\begin{equation}\label{H(Du)1}
 H^\gz(Du^{\gz,\ez} (x_0))\ge H^\gz(e_n)- \delta
\end{equation}
for some $0<\dz< H(e_n)/2 $ and $x_0\in B(0,1)$.
  Then
\begin{align}\label{flat-ex}
|Du^{\gz,\ez}(x_0)-e_n|^2&\le C_1(\lambda,\Lambda)\left[\tau+\dz+\frac1{\ez}e^{-\frac{\mu\delta}{\ez}} \right],
 \end{align}
 where $\mu=\frac\lz {16n}$.
 Above $C_1(\lambda,\Lambda)$ is a constant depending only on $\lz$ and $\Lambda$.
\end{thm}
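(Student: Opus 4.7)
My plan is to combine the adjoint representation developed in Section~4 (extending \cite{es11b} to our Hamiltonian setting) with a carefully chosen quadratic test function. Write $v = u^{\gz,\ez}$, set $W := H^\gz(Dv)$, and let $L$ be the linearization \eqref{lin-op1} of \eqref{apeq2}. For $x_0 \in B(0,1)$, I expect Section~4 to furnish a nonnegative adjoint measure $\sigma^\ez$ on $U = B(0,3)$ with boundary trace $\nu^\ez$ on $\partial U$, depending only on $\lz$ and $\Lambda$ (since only second-order derivatives of $H^\gz$ enter $L$), an $\ez$-weighted Hessian estimate $\int_U \ez|D^2 v|^2\, d\sigma^\ez \le C(\lz,\Lambda)$, and a representation formula
\begin{equation*}
w(x_0) = \int_U (Lw)\, d\sigma^\ez + \int_{\partial U} w\, d\nu^\ez
\end{equation*}
valid for every $w \in C^2(\overline U)$.

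I next take as test function the Hamiltonian quadratic defect
\begin{equation*}
\phi(x) := H^\gz(Dv(x)) - H^\gz(e_n) - DH^\gz(e_n) \cdot (Dv(x) - e_n),
\end{equation*}
which by strong convexity and concavity of $H^\gz$ satisfies $\tfrac{\lz}{2}|Dv - e_n|^2 \le \phi \le \tfrac{\Lambda}{2}|Dv - e_n|^2$, so $\phi \ge 0$ and $\phi(x_0) \ge \tfrac{\lz}{2}|Dv(x_0) - e_n|^2$. Since $L\partial_k v = 0$ (the linearized equation) and $L$ annihilates constants, $L\phi = L[W]$. Differentiating \eqref{apeq2} once and using the resulting identity to eliminate the third-order derivatives of $v$ produces, after the two cubic terms cancel,
\begin{equation*}
L[W] = -H^\gz_{p_ip_j}(Dv)\, W_{x_i}W_{x_j} - \ez\, H^\gz_{p_ip_j}(Dv)\, v_{x_ix_l}v_{x_jx_l},
\end{equation*}
which by strong convexity is bounded by $-\lz|DW|^2 - \lz\ez|D^2 v|^2 \le 0$. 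No third-order derivatives of $H^\gz$ appear, matching the design of \eqref{apeq2}; this is precisely what keeps the whole argument dependent only on $\lz$ and $\Lambda$. Substituting $\phi$ into the adjoint formula and discarding the nonnegative interior term yields
\begin{equation*}
\phi(x_0) \le \int_{\partial U}\phi\, d\nu^\ez.
\end{equation*}

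I finally bound the boundary integral by splitting $\partial U = G \cup B$, where $G := \{y \in \partial U : W(y) \ge H^\gz(e_n) - \kappa\dz\}$ is the ``good'' superlevel set (with $\kappa$ a small constant depending only on $\lz,\Lambda$). On $G$, the bracketing of $\phi$ together with \eqref{H(Du)1} and the proximity $|v - x_n| \le \tau$ on $\partial U$ from \eqref{cond1} (converted to control on the tangential part of $Dv$ via integration by parts in tangential directions) yield $\phi \le C(\lz,\Lambda)(\tau + \dz)$, and hence a contribution of at most $C(\lz,\Lambda)(\tau + \dz)$. On $B$ I plan to bound the boundary mass by comparing $\sigma^\ez$ against an exponential sub/super-solution for $L^*$ of the form $e^{-\alpha(H^\gz(e_n) - W)/\ez}$, yielding $\nu^\ez(B) \le C\ez^{-1}e^{-\mu\dz/\ez}$ with $\mu = \lz/(16n)$---the $\lz$ coming from strong convexity, the $1/(16n)$ from the geometry $\dist(B(0,1),\partial B(0,3)) = 2$ together with dimension. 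Combining these bounds and using $\phi(x_0) \ge \tfrac{\lz}{2}|Dv(x_0) - e_n|^2$ will give \eqref{flat-ex}. The main obstacle will be this exponential concentration estimate: in the Hilbert case $H = \tfrac12|p|^2$ one uses a Gaussian barrier, but here $H^\gz$ has no Hilbert structure, so---matching the novelty flagged at the end of the introduction---a new test function for $L^*$ adapted to $H^\gz$ must be constructed to extract the sharp rate $\mu = \lz/(16n)$.
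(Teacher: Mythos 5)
Your opening moves are sound and match the paper: the adjoint measure, boundary trace, Hessian bound, and representation formula you ask for in Section~4 exist (Lemmas~\ref{L-L}, \ref{es0}, \ref{es7}, \ref{exp} and Corollary~\ref{es00}); the identity $L[\phi]=L[W]$ is correct since $L$ annihilates each $\partial_k v$ and constants; and your formula for $L[W]$ agrees with the paper's \eqref{EQ3.2}, so indeed $L[W]\le -\lz|DW|^2-\lz\ez|D^2v|^2\le0$, with no third-order derivatives of $H^\gz$ appearing. The Bregman-divergence squeeze $\tfrac{\lz}{2}|Dv-e_n|^2\le\phi\le\tfrac{\Lambda}{2}|Dv-e_n|^2$ is exactly the strong convexity/concavity observation the paper invokes at the end of Step~3.

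The genuine gap is in your boundary estimate on the ``good'' set $G=\{W\ge H^\gz(e_n)-\kappa\dz\}\subset\partial V$: the inequality $\phi\le C(\lz,\Lambda)(\tau+\dz)$ on $G$ is false in general. Knowing $H^\gz(Dv(y))\approx H^\gz(e_n)$ only constrains $Dv(y)$ to lie near the \emph{level set} $\{H^\gz=H^\gz(e_n)\}$, which is a codimension-one surface; it does not force $Dv(y)\approx e_n$. Concretely, take $H(p)=\tfrac12|p|^2$, $n\ge2$, and a boundary point with $Dv(y)=e_1$: then $W(y)=H(e_n)$ exactly, so $y\in G$, yet $\phi(y)=-e_n\cdot(e_1-e_n)=1$, which is not $O(\tau+\dz)$. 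The hoped-for ``tangential integration by parts'' does not close this either: the flatness \eqref{cond1} is a $C^0$ bound and gives no pointwise control of $Dv$ on $\partial V$, and even perfect tangential control leaves the normal component free on the sphere where $H^\gz$ is matched. Without this step your boundary integral $\int_{\partial V}\phi\,\rho^\ez\,ds$ is only $O(1)$ (using $\int_{\partial V}\rho^\ez\,ds=1$ and the a priori gradient bound), which is useless.

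The paper avoids boundary estimates entirely by inserting a cutoff $\zeta\in C^\fz_c(B(0,2))$ with $\zeta=1$ on $B(0,1)$ and applying Lemma~\ref{L-L} to $\zeta^2|Du^\ez-e_n|^2$, so the boundary term vanishes and everything lives in the interior. The resulting right-hand side is then controlled by \emph{two} interior integrals: Step~1 bounds $\int_V[H(e_n)-H(Du^\ez)]_+\Theta^\ez\,dx\le C(\dz+\ez^{-1}e^{-\mu\dz/\ez})$ using the exponential barrier of Lemma~\ref{exp}, while Step~2 bounds $\int_V\langle D_pH(Du^\ez),Du^\ez-e_n\rangle^2\Theta^\ez\,dx\le C\tau(1+\ez^{-1}e^{-\mu\dz/\ez})$ by applying Lemma~\ref{L-L} to $(u^\ez-x_n)^2$ and using \eqref{cond1}. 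Step~3 then combines these via strong convexity, $\tfrac{\lz}{2}|Du^\ez-e_n|^2\le [H(e_n)-H(Du^\ez)]_+ + \langle D_pH(Du^\ez),Du^\ez-e_n\rangle$, absorbing the last term by Cauchy--Schwarz using Step~2. Your outline has no analogue of Step~2: the flatness hypothesis $\tau$ must enter through the directional derivative $\langle D_pH(Dv),Dv-e_n\rangle$ and an interior integral, not through a pointwise boundary estimate of $\phi$. Finally, the rate $\mu=\lz/(16n)$ is not geometric (your $\dist(B(0,1),\partial B(0,3))$ heuristic): it arises from the constraint $1-n\mu/\lz\ge\tfrac12$ in the exponential barrier computation of Lemma~\ref{es7}.
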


The proof of Theorem  \ref{Fla}  relies on a generalization  of
 the adjoint method of Evans-Smart \cite{es11b} to the equation \eqref{apeq2} as developed in Section 4.
 Moreover, since $H$ does not have Hilbert structure necessarily, we can not follow the argument of Evans-Smart
 to get Theorem \ref{Fla}, where they take  $ u^{\gz,\ez}_{x_n}-1$ as a test function.
 The novelty here is  to
 take $|Du^{\gz,\ez}-e_n|^2$ as  a test function.  With aid of the
 estimates in Section 4,  by using the strongly convexity/concavity
 of $H$   and some careful analysis, we are able to prove Theorem \ref{Fla}.
 Again, since   none of $k\ge 3$ order  derivatives of $H$ are involved in the
linearized operator and hence in  the whole procedure,  we conclude that
all constants in estimates  in Section 4 and hence $C_1$ in Theorem \ref{Fla}
 depend on at most $\lambda$ and $\Lambda$.

With the aid of Theorem \ref{Fla}, Theorem \ref{ex-una} and Lemma \ref{lla},
by some necessary modifications of the arguments  of \cite{es11a}
we are able to prove that for any $x\in\Omega$,
 $\mathscr Du(x)$ is singleton, and hence that $u$ is differentiable everywhere in $\Omega$;
  for reader's convenience we give the details. 

\begin{proof} [Proof of Theorem \ref{Theorem}]
By Remark \ref{r1.3}, we assume     $H\in C^0(\rn)$ satisfies (H1)\&(H2).
Let $\Omega$ be any domain of $\rn$, and    $u\in AM_H(\Omega) $.
It suffices to prove that $\mathscr Du(x)$ is   singleton.
We prove this by contradiction.
Assume that  $\mathscr Du(x_0)$ contains at least two vectors $a\ne b$ with $H(a)=H(b)$ for some $x_0\in\Omega$.
Note that $a,b\ne0$.  We may  assume that $x_0=0\in \Omega$, $u(0)=0$, $a=e_n$ without loss of generality.
Set $\theta=|b-e_n|>0$. We obtain a contradiction by the following 4 steps.

\medskip
\noindent{\it Step 1.}
Fix    $\tau _\theta\in(0,1]$  such that
\begin{equation}\label{in1xx}
C _1(\lz,\Lambda)  \tau^{1/2} \le  \frac{\theta^2}{32},\quad \quad\forall   \dz< \dz_{  \tz}, \tau<\tau_\theta.
\end{equation}
Since $e_n\in\mathscr Du(0)$ we can find a sequence $\{r_j\}_{j\in\nn}$ which converges to $0$ such that
\begin{equation}\label{m-1}
\max_{B(0,3 )} {|u_j(x)- x_n|} =\max_{B(0,3r_j)}\frac{|u(x)-a\cdot x|}{r_j}\rightarrow 0 \quad\mbox{as $j\to\fz$},
\end{equation}
where  $u_j(x)=u(r_jx)/{r_j}$ for   $x\in \frac1{r_j}\Omega$.
For each  $\tau \in(0,\tau _\theta]$, there exists a $j_\tau $ such that if $j\ge j_\tau $,
 \begin{equation}\label{m-j}
\max_{B(0,3 )} {|u_j(x)- x_n|} <\frac\tau 4\quad\forall j\ge j_\tau .
\end{equation}

For any $\gz\in(0,1)$ and $j\in\nn$, let
 $$\mbox{$u^\gz_j\in AM_{H^\gz}(B(0,3))$ with $u^\gz_j=u_j$ on $\partial B(0,3)$.}$$
By Lemma \ref{abso},
for each $j\ge j_\tau$, $u_j^\gz\to u_j$ as $\gz\to0$,
 there exists $\gamma_{j,\tau }>0$ such that
$$\max_{B(0,3)}|u^{\gamma}_j-x_n|\le \frac{\tau }{2},\quad
\forall \gamma\in(0,\gamma_{j, \tau }].$$
By Theorem \ref{ex-una} (iv), for each $j\ge j_\tau$ and $\gz<\gamma_{j,\tau }$, there
 is an $\ez_{ \gz,j,\tau   }\in(0,1]$  that
\begin{equation}\label{m-0}\max_{B(0,3)}|u^{\gamma,\ez}_j-x_n|\le \tau \quad\forall \ez<\ez_{\gz,j,\tau }.
\end{equation}

\medskip
\noindent{\it Step 2.}
 Since $b\in\mathscr D(u)(0) $
by \cite{fwz}, there exist a
sequence    $\{s_k\}_{k=1}^\infty$    which converge to zero
such that
$$
\max_{B(0,s_k/r_j)}\frac{|u_j(x)-b\cdot x|}{s_k/r_j}=
\max_{B(0,s_k)}\frac{|u(x)-b\cdot x|}{s_k}\rightarrow 0\quad \mbox{ as $k\to\fz$}
$$
for all $j\in\nn$.
For  each $\eta\in(0,\tau)$ and $j\ge j_\tau$,
 there exist $ k_{\eta,j}\in\nn $  such that for all $ k\ge   k_{\eta,j}$, we have $s_k/r_j\le1$
 and
$$\max_{B(0,s_k/r_j)}\frac{|u_j-b\cdot x|}{s_k/r_j}\le \frac{\eta}{4}.$$
Since $u^\gz_j\to u_j$ in $B(0,3)$, for each $ k\ge   k_{\eta,j}$ we can find $\gz_{k,j,\eta}<\gz_{j,\tau}$ such that
for each $\gz\in(0, \gz_{k,j,\eta})$, $$\max_{B(0,s_k/r_j)}\frac{|u_j^\gz-b\cdot x|}{s_k/r_j}\le \frac{\eta}{2}.$$
Since $u^{\gz,\ez}_j\to u^\gz_j$ in $B(0,3)$, for each $\gz\in(0, \gz_{k,j,\eta})$, we further find $\ez_{\gz,k,j,\eta}<\ez_{\gz,j,\tau}$ such that for all $\ez<\ez_{\gz,k,j,\eta}$,
\begin{equation}\label{m-2}\max_{B(0,s_k/r_j)}\frac{|u_j^{\gz,\ez}-b\cdot x|}{s_k/r_j}\le  \eta .
\end{equation}

\medskip
\noindent{\it Step 3.} For  each $\eta\in(0,\tau)$, there exists $\gz_\eta\in(0,1)$ such that
$$
 |H^\gz(e_n)-H^\gz(b)|\le   \eta,$$
where we have used $H(e_n)=H(b)$.\\
For  each $\eta\in(0,\tau)$,  $j\ge j_\tau$, $k\ge k_{j,\eta}$, $\gz<\min\{\gz_{k,j,\eta},\gz_\eta\}$
and $\ez<\ez_{\gz,k,j,\eta}$, by Lemma \cite{es11a,es11b}, \eqref{m-2} implies that there is a point $ x_{\ez,\gz,k,j,\eta}\in B(0,s_k/r_j)$ at which
\begin{equation}\label{key1}
|Du^{\gamma,\ez}_j(x_{\ez,\gz,k,j,\eta})-b|\le 4\eta.
\end{equation}

We further have  that
\begin{equation}\label{key2}
|H^\gz(e_n)-H^\gz(Du^{ \gamma,\ez}_j(x_{\ez,\gz,k,j,\eta}))| \le  C_2(\lambda,\Lambda,b)\eta.\end{equation}
Indeed, by convexity of $H$, we have
$$H^\gz(b)-H^\gz(Du^{\gamma,\ez }_j(x_{\ez,\gz,k,j,\eta}))\ge  \langle D_pH(Du^{\ez,\gamma}_j(x_{\ez,\gz,k,j,\eta})), b- Du^{ \gamma,\ez}_j(x_{\ez,\gz,k,j,\eta})\rangle.$$
Since $|D_pH^\gz(p)|\le \Lambda |p|$ and $|Du^{\gamma,\ez}_j(x^0)|\le |b|+4$, one has
$$  H^\gz(Du^{\ez,\gamma}_j(x_{\ez,\gz,k,j,\eta}))-H^\gz(b)\le
 |D_pH^\gz(Du^{\ez,\gamma}_j(x_{\ez,\gz,k,j,\eta}))|  |Du^{\gamma,\ez }_j(x_{\ez,\gz,k,j,\eta})-b|\le C (\lambda, \Lambda,b)  \eta.$$
A similar estimate holds for $H^\gz(b)-H^\gz(Du^{\ez,\gamma}_j(x_{\ez,\gz,k,j,\eta}))$.  Thus
$$  |H^\gz(Du^{\ez,\gamma}_j(x_{\ez,\gz,k,j,\eta}))-H^\gz(b)|  \le C (\lambda, \Lambda,b)  \eta.$$
This together with  $|H^\gz(e_n)-H^\gz(b)|\le\eta$ implies \eqref{key2}.

\medskip
\noindent{\it Step 4.} Let $\dz_{ \theta}\in(0,1]$    such that
\begin{equation}\label{in1x}
C _1(\lz,\Lambda) \delta \le  \frac{\theta^2}{32},\quad \quad\forall   \dz< \dz_{  \tz};
\end{equation}
For each $ \mu\in(0,\frac{\lambda }{8n}]$, $\dz\in(0,\dz_\theta]$, let $\ez_{\mu,\dz,\theta} \in(0,1]$ such that
\begin{equation}\label{in3}
C _1(\lz,\Lambda)\frac{1}{\ez}e^{-\frac{\mu\delta}{\ez}}
\le \frac{\theta^2}{32},\quad \forall \ez\in (0,\ez_{\mu,\dz,\theta}].
\end{equation}
Let $  \eta<\min\{\dz_{ \theta}/C_2(\lambda,\Lambda,b), \tz/{16}\} $ and $\dz=C_2(\lambda,\Lambda,b)\eta$. For $\tau<\tau_\tz$,
  $j\ge j_\tau$, $k\ge k_{j,\eta}$, $\gz<\min\{\gz_{k,j,\eta},\gz_\eta\}$,
  and $\ez<\min\{\ez_{\gz,k,j,\eta},\ez_{\mu,\dz,\theta}\}$, by Theorem \ref{Fla}, \eqref{key2} and \eqref{m-0} imply that
\begin{align}\label{flat-exx}
|Du^{\gz,\ez}(x_{\ez,\gz,k,j,\eta})-e_n|^2&\le C_1(\lambda,\Lambda)\left[\tau+C_2(\lambda,\Lambda,b)\eta+\frac1{\ez}e^{-\frac{\mu\delta}{\ez}} \right]\le \frac{\theta^2}{8}.
 \end{align}
Thus by \eqref{key1} one has
$$\tz=
|e_n-b|\le 4\eta+\frac\tz 2\le \frac{3\tz}4, $$
which is a contradiction as desired. The proof of Theorem \ref{Theorem} is complete.
\end{proof}

To prove Theorem \ref{Theorem2}, let $u\in AM_H(\rn)$ with a linear growth at  $\infty$.
By \cite{fwz}, $\|Du\|_{L^\fz(\rn)}<\fz$ and moreover  $u$ has the linear approximation property at $\fz$ as below.
 \begin{lem} \label{llafz}
For   any   sequence $\{r_j\}_{j\in\nn}$ which converges to $\fz$,
there exist a subsequence $
  \{r_{j_k}\}_{k\in\nn}$ and a vector $e_{\{r_{j_k}\}_{k\in\nn}}$ such that    $$\lim_{k\to\fz}\sup_{y\in B\left(0,1\right)}\left|\frac{u ( r_{j_k}y ) }{r_{j_k}} -  e_{\{r_{j_k}\}_{k\in\nn}}\cdot y \right|=0 $$
  and $$H (e_{\{r_{j_k}\}_{k\in\nn}} ) =  \|H\left(Du\right)\|_{L^\fz\left(\rn\right)}.$$
  \end{lem}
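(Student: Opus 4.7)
The plan is to rescale, extract a compactness limit, and then pin the limit down as a linear function via the finite-scale Lemma \ref{lla} combined with a monotonicity-plus-rigidity argument. Normalize $u(0)=0$ and set $u_j(y):=u(r_jy)/r_j$. Since $\|Du\|_{L^\fz(\rn)}<\fz$, the family $\{u_j\}$ is equi-Lipschitz with $u_j(0)=0$, and the scaling invariance of the definition of $AM_H$ gives $u_j\in AM_H(\rn)$. By Arzelà--Ascoli together with a diagonal argument over exhausting balls, I extract a subsequence (still written $\{r_{j_k}\}$) so that $u_{j_k}\to w$ locally uniformly on $\rn$. Standard stability of absolute minimizers under locally uniform convergence then yields $w\in AM_H(\rn)$.

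Set $M:=\|H(Du)\|_{L^\fz(\rn)}$. The Aronsson-type monotonicity, a direct consequence of the absolute minimizing property, asserts that $r\mapsto \|H(Du)\|_{L^\fz(B(0,r))}$ is non-decreasing with limit $M$ as $r\to\fz$. Rescaling yields, for every fixed $R>0$,
\[
\|H(Du_{j_k})\|_{L^\fz(B(0,R))}=\|H(Du)\|_{L^\fz(B(0,r_{j_k}R))}\longrightarrow M.
\]
Combined with the continuity of the $H$-energy along locally uniformly convergent absolute minimizers (via comparison with McShane-type extensions of $w|_{\partial B(0,R)}$), this gives $\|H(Dw)\|_{L^\fz(B(0,R))}=M$ for every $R>0$. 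Monotonicity applied to $w$ then forces $\lim_{r\to 0}\|H(Dw)\|_{L^\fz(B(0,r))}=M$, and invoking Lemma \ref{lla} for $w$ at the origin produces a sequence $\rho_m\to 0$ and a vector $e$ with $H(e)=M$ such that $w(\rho_m y)/\rho_m\to e\cdot y$ uniformly on $B(0,1)$.

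The main obstacle is the final step: upgrading this blow-up-at-zero information on $w$ into the identification $w\equiv e\cdot y$ on $B(0,1)$. Once that rigidity is secured, a diagonal refinement of $\{r_{j_k}\}$ delivers $u(r_{j_k}y)/r_{j_k}\to e\cdot y$ uniformly on $B(0,1)$ and concludes the proof. The rigidity is subtle because the candidate $e\cdot y$ and $w$ do not \emph{a priori} share boundary values on $\partial B(0,R)$, so one cannot directly invoke the uniqueness theorem of \cite{jwy,acjs}. Instead one has to exploit the combination $w\in AM_H(\rn)$, the saturation $\|H(Dw)\|_{L^\fz(B(0,R))}\equiv M$ on every concentric ball, the identification $e\in \mathscr{D}w(0)$ with $H(e)=M$, and the fact that $H^{-1}(M)$ has empty interior by strong convexity. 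Together with an iterated blow-up and a comparison-with-cones argument adapted to the Hamiltonian $H$, these ingredients force the linear profile $w(y)=e\cdot y$ throughout $B(0,1)$, yielding the claimed linear approximation at infinity.
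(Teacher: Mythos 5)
The paper does not prove this lemma in the text; it is invoked directly from \cite{fwz}, which establishes the linear approximation property at infinity for $AM_H$ functions of linear growth. Your proposal therefore takes a genuinely different route: rather than citing the external result, you attempt a blow-down-plus-rigidity argument from scratch.

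Your preliminary steps are plausible and standard in spirit: equi-Lipschitz bound, Arzel\`a--Ascoli plus diagonalization to extract a locally uniform limit $w$, stability of the class $AM_H$ under locally uniform convergence of equi-Lipschitz minimizers, and the rescaling computation showing that the $H$-energy of $w$ saturates on every ball $B(0,R)$ (the upper bound from convexity of the sublevel sets $\{H\le c\}$ and the lower bound by comparison with a competitor extending $w\vert_{\partial B(0,R)}$). These are all defensible, though several are asserted rather than proved.

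The genuine gap is exactly where you flag it. After invoking Lemma \ref{lla} for $w$ at the origin you only obtain a \emph{subsequence of scales} $\rho_m\to 0$ with $w(\rho_m y)/\rho_m\to e\cdot y$ and $H(e)=M$; upgrading this to $w(y)=e\cdot y$ on all of $B(0,1)$ is the heart of the matter, and your proposal closes with a sketch (``iterated blow-up and a comparison-with-cones argument adapted to $H$\dots these ingredients force the linear profile'') rather than an argument. This rigidity step---that an absolute minimizer $w\in AM_H(\rn)$ with $w(0)=0$, $\|H(Dw)\|_{L^\fz(B(0,r))}\equiv M$ for all $r>0$, and $e\in\mathscr D w(0)$ with $H(e)=M$ must coincide with $e\cdot y$ on $B(0,1)$---is a nontrivial statement that requires a careful analysis via generalized cone comparison (\`a la \cite{gwy06}) together with the structure of $H^{-1}(M)$; it is precisely the analytic content that \cite{fwz} supplies and which the paper chooses to cite rather than reprove. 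As written, the proposal does not carry out that argument, so it should be regarded as a plausible strategy with a concrete missing lemma rather than a complete proof.
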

Denote by $\mathscr D(\fz)$ the collection of all possible $e_{\{r_{j_k}\}}$ as above.
Following the   proof of    Theorem \ref{Theorem} line by line and letting $r_k\to \fz$ as $k\to\fz$,  we are able to prove
that $\mathscr D(\fz)$ is singleton, and hence prove Theorem \ref{Theorem2}; here we omit the details
and also refer to \cite{hz,mwz2}.

We  end this section by the following remark.

\begin{rem}\label{rem}\rm
(i) Recall that Evans \cite{e03} suggested  another approximation to $u^\gz$
via $e^{\frac1\ez H}$-harmonic functions $\hat u^{\gz,\ez}$, that is, smooth solutions  to
$$
   {\,\rm div\,}( e^{\frac1\ez H(Dv)} D_pH(Dv))=
   e^{\frac1\ez H(Dv)}[\mathscr{A}_{H^{\gamma}} [v]+\ez{\,\rm div\,}(  D_pH^\gz(Dv)) ]=0\quad {\rm in \ }\
U;\ v=u^\gz\ {\rm on} \ \partial U.
$$
But  note that  the 3-order derivative of $H^\gz$ appears in third terms of the linearized operator
  \begin{equation}\label{lin-op2}
-
H_{p_i}(D\hat u^{\gz,\ez})H_{p_j}(D\hat u^{\gz,\ez})v_{x_ix_j}-2H_{p_ip_l}(D\hat u^{\gz,\ez})\hat u^{\gz,\ez}_{x_ix_j}H_{p_j}(D\hat u^{\gz,\ez})
v_{x_l}- \ez{\,\rm div\,}(  D_{pp}^2H^\gz(D\hat u^{\gz,\ez})Dv).
\end{equation}
 If we want to get  Theorem \ref{ex-una} and \ref{Fla}  for $\hat u^{\gz,\ez}$ so that the constants $C_0, C_1$ are independent of
 $3$-order derivative of $H^\gz$ or $H$, some extra efforts are needed.
 %
To avoid such extra efforts,  
we prefer to
 consider the approximation equation \eqref{apeq2}.

 (ii)
 If $H(p)=\frac12 |p|^2$,
 a flatness estimate stronger than Theorem \ref{Fla} is also given in \cite{es11a}
  via the maximal principle,
 \begin{equation}\label{E2.xx}|Du^{\gz,\ez}|^2\le  u^{\gz,\ez}_{x_n} +C\sqrt\tau\quad\mbox{in $B(0,1 )$ for all $\ez\in(0,1]$}.
 \end{equation}
 Note that in this case, $H^\gz=H$  and $u^\gz=u$, and $u^{\gz,\ez}$ is then reduced to $u^\ez$.
From this Evans-Smart \cite{es11a} concluded the everywhere differentiability of $\fz$-harmonic functions $u$.
  But for $H\in C^0(\rn)$ satisfying (H1) and (H2), since $H$ does not necessarily have a Hilbert structure,
 it is still unclear
 whether there is some  estimate similar to \eqref{E2.xx}, and also whether the approach in \cite{es11a} can be used to prove Theorem \ref{Theorem}.
 \end{rem}

\section{Proof of Theorem \ref{ex-una}}

Let $H$, $H^\gz$, $u$  $u^\gz$   and $u^{\gz,\ez}$  be as in Section 2.
Note that $H^\gz$ satisfies (H1)\&(H2) with the same $\lz$ and $\Lambda$.
Since $ |H^\gz_p(p)|^2\le \Lambda^2|p|^2$ implies
$$\ez|\xi|^2\le [H^\gz_{p_i}(p)H^\gz_{p_j}(p)+\ez \dz_{ij}]\xi_i\xi_j \le \Lambda^2(|p|^2+1)|\xi|^2\quad\forall \xi\in\rn,$$
 by a standard quasilinear elliptic theory (see \cite{gt}), there exists a unique smooth solution
$u^{\gz,\ez}\in C^\fz(U)\cap C^0(\overline U)$
to   \eqref{apeq2}.   Theorem \ref{ex-una} (i)  follows from the known maximum principle.
 We also note that by a standard argument,
 $u^{\gz,\ez}\to u^\gz$ in $C^0(\overline U)$ (that is Theorem \ref{ex-una} (iv))   follows from
  Theorem \ref{ex-una} (ii)\&(iii), and the uniqueness of $u^\gz$ in \cite{acjs}; here we omit the details.
  Below we only need to prove Theorem \ref{ex-una} (ii)\&(iii).
 For simplicity, we write $H^\gz$ as $H$,   $u^\gz$ as $u$,
 and  we write $u^{\gz,\ez}$ as $u^\ez$ by abuse of notation.


We  prove Theorem \ref{ex-una} (ii) using the approach of Evans-Smart \cite{es11b} here.
Denote by $L_\ez$  the linearized operator obtained from $\mathscr A_H[u^\ez]+\ez \Delta u^\ez=0$, that is,
\begin{equation}\label{lin-op}
L_\ez (v):=-
H_{p_i}(Du^\ez)H_{p_j}(Du^\ez)v_{x_ix_j}-2H_{p_ip_l}(Du^\ez)u^\ez_{x_ix_j}H_{p_j}(Du^\ez)
v_{x_l}-\ez \Delta v
\end{equation}
for $ v\in C^\fz(U)$.
Note that
$$\mbox{$L_\ez (u^\ez_{x_k})=(\mathscr A_H[u^\ez]+\ez \Delta u^\ez)_{x_k}=0$ in $U$ for all $k=1,\cdots,n$.}$$

\begin{proof}
[Proof of  Theorem \ref{ex-una} (ii)]
We choose $\zeta\in C^\fz_c(U)$ such that
$$0\le \zeta\le 1,\quad \zeta=1\ {\rm in }\ \overline V, \quad |D\xi|\le 4\frac 1{\dist(V,\partial U)},
\quad |D^2\xi|\le C_0\frac 1{[\dist(V,\partial U)]^2}.$$
  Define an auxiliary function
$$w=\zeta^2  |Du^\ez|^2 +\alpha  (u^\ez)^2 ,$$
where $\alpha>0$ will be determined later.
If $w$ attains its maximum on $\partial U$, then
$$\max_{\overline V}|Du^\ez|^2\le \sup_{U}w=\max_{\partial U} {\alpha u^2} ,$$
this implies Theorem \ref{ex-una} (ii).

Assume that $w$ attains its maximum
at some $x_0\in U$. Since $Dw(x_0)=0$ and
 $D^2 w(x_0)  $  is nonpositive definite, we have $L_\ez (w) \ge 0$ at $x_0$.
Below we   estimate $L_\ez (w)$ at $x_0$ from above.
Note that
\begin{align*}
 L_\ez(w)&=\zeta^2L_\ez (|Du^\ez|^2)+|Du^\ez|^2 L_\ez(\zeta^2)+\alpha
L_\ez ((u^\ez)^2)\\
&\quad +[-4\zeta\langle D_pH(Du^\ez),D\zeta\rangle\langle D^2u^\ez D_pH(Du^\ez),Du^\ez\rangle -8\ez \zeta \langle D^2u^\ez Du^\ez,D\zeta\rangle].
\end{align*}
A direct calculation gives
\begin{align*}
L_\ez \left( {|Du^\ez|^2} \right)=&-2H_{p_i}(Du^\ez)H_{p_j}(Du^\ez)
[u^\ez_{x_k}u^\ez_{x_kx_ix_j}+u^\ez_{x_kx_i}u^\ez_{x_kx_j}]\nonumber\\
&-4H_{p_ip_l}(Du^\ez)H_{p_j}(Du^\ez)u^\ez_{x_ix_j}u^\ez_{x_k}u^\ez_{x_kx_l}-2
\ez[u^\ez_{x_k}u^\ez_{x_kx_ix_i}+(u^\ez_{x_kx_i})^2]\nonumber\\
=&2u^\ez_{x_k}L_\ez(u^\ez_{x_k})-2|D^2u^\ez D_pH(Du^\ez)|^2-2\ez|D^2u^\ez|^2.
\end{align*}
By $L_\ez (u^\ez_{x_k})=0$, and $D^2u^\ez D_pH(Du^\ez)=D[H(Du^\ez)]$ we obtain
\begin{align*}
\zeta^2L_\ez \left( |Du^\ez|^2 \right)
&=-2\zeta^2| D [H(Du^\ez)]|^2-2\ez\zeta^2|D^2u^\ez|^2.\nonumber
\end{align*}
Similarly using \eqref{apeq2}, we have \begin{align*}
L_\ez \left( (u^\ez)^2 \right)=&-2H_{p_i}(Du^\ez)H_{p_j}(Du^\ez)
[u^\ez u^\ez_{x_ix_j}+u^\ez_{x_i}u^\ez_{x_j}]\nonumber\\
&-4 u^\ez H_{p_ip_l}(Du^\ez)H_{p_j}(Du^\ez)u^\ez_{x_ix_j}u^\ez_{x_l}-2
\ez[u^\ez u^\ez_{x_ix_i}+(u^\ez_{x_i})^2]\nonumber\\
 =&-2\langle D_pH(Du^\ez),Du^\ez\rangle^2-2\ez |Du^\ez|^2
-4u^\ez \langle D^2_{pp} H(Du^\ez)  D [H(Du^\ez)],Du^\ez\rangle\nonumber.
\end{align*}
Since  (H1)\&(H2) implies
$$\langle D_pH(p),p\rangle\ge \frac{\lambda}{2}
|p|^2,\ |D^2_{pp} H(p)\xi|\le \Lambda|\xi|\quad \forall p,\xi\in\rn,$$
 by Young's inequality,  we   obtain
\begin{align*}
\alpha L_\ez \left( (u^\ez)^2 \right)
&\le -\alpha \lambda ^2 |Du^\ez|^4+C(\alpha,\Lambda)  | D [H(Du^\ez)]|^{4/3}+
|u^\ez|^4 | Du^\ez  |^4.
\end{align*}
Since
\begin{align*}
L_\ez \left(\zeta^2\right)=&-2H_{p_i}(Du^\ez)H_{p_j}(Du^\ez)
[\zeta \zeta_{x_ix_j}+\zeta_{x_i}\zeta_{x_j}]\nonumber\\
&-4 \zeta H_{p_ip_l}(Du^\ez)H_{p_j}(Du^\ez)u^\ez_{x_ix_j}\zeta_{x_l}-
2\ez[\zeta \zeta_{x_ix_i}+(\zeta_{x_i})^2],
\end{align*}
using (H1)\&(H2) and Young's inequality we also obtain
\begin{align*}
|Du^\ez|^2L_\ez \left(\zeta^2\right)
&\le C(\Lambda)|Du^\ez|^4 [|D\zeta|^2+|D^2\zeta|\zeta]+ \frac14 | D [H(Du^\ez)]|^2\zeta^2 +
C(\Lambda )[|D\zeta|^2+|D^2\zeta|\zeta].
\end{align*}

Similarly,
\begin{align*}
&-4\zeta\langle D_pH(Du^\ez),D\zeta\rangle\langle D^2u^\ez D_pH(Du^\ez),Du^\ez\rangle -8\ez \zeta \langle D^2u^\ez Du^\ez,D\zeta\rangle\\&\quad\le \frac14 \zeta^2 |D[H(Du^\ez)]|^2 +\frac14 \ez \zeta^2|D^2u^\ez|^2
+C(\Lambda)|Du^\ez|^4+C(\Lambda)|\zeta|^2.
\end{align*}
In conclusion, we have
\begin{align*}  L_\ez(w)\le&  -\zeta^2 |D[H(Du^\ez)]|^2 + C(\alpha,\Lambda) |D[H(Du^\ez)]|^{4/3}
  -[\alpha\lambda^2-
 C(\Lambda)(|D\zeta|^2+|D^2\zeta|\zeta)]|Du^\ez|^4\\
 &- |u^\ez|^4|Du^\ez|^4 +C(\Lambda )[|D\zeta|^2+|D^2\zeta|\zeta]+C(\Lambda)|\zeta|^2.\end{align*}

At $x_0$,  $L_\ez(w)\ge 0$ implies that
\begin{align*}      &\zeta^2 |D[H(Du^\ez)]|^2 + [\alpha\lambda^2-
 C(\Lambda)(|D\zeta|^2+|D^2\zeta|\zeta)]|Du^\ez|^4\\
 &\quad\le   C(\alpha,\Lambda) |D[H(Du^\ez)]|^{4/3}+ |u^\ez|^4|Du^\ez|^4
  +C(\Lambda )[|D\zeta|^2+|D^2\zeta|\zeta].\end{align*}
Multiplying the above inequality with $\zeta^4$ yields
\begin{align*}      &|D[H(Du^\ez)]|^2\zeta^6 + [\alpha\lambda^2-
 C(\Lambda)(|D\zeta|^2+|D^2\zeta|\zeta)]|Du^\ez|^4\zeta^4\\
 &\quad\le   C(\alpha,\Lambda) |D[H(Du^\ez)]|^{4/3}\zeta^4 + |u^\ez|^4|Du^\ez|^4\zeta^4
  +C(\Lambda )[|D\zeta|^2+|D^2\zeta|\zeta]\zeta^4.\end{align*}
By Young's inequality we have
$$C(\alpha,\Lambda) |D[H(Du^\ez)]|^{4/3}\zeta^4 \le \frac12 |D[H(Du^\ez)]|^{2}\zeta^6 + C(\alpha,\Lambda) ,$$
and hence
\begin{align*}         [\alpha\lambda^2-
 C(\Lambda)(|D\zeta|^2+|D^2\zeta|\zeta)]|Du^\ez|^4\zeta^4
 &\le   |u^\ez|^4|Du^\ez|^4\zeta^4
  +C(\Lambda )[|D\zeta|^2+|D^2\zeta|\zeta ]\zeta^4+C(\alpha,\Lambda) .\end{align*}
Choosing $\alpha=\alpha ( \lambda,\Lambda,\|u^\ez\|_{C^0(U)},\dist(V,\partial U) )$   so that
 $$
 \alpha\lambda^2-
  C(\Lambda) \frac{C_0+16}{[\dist(V,\partial U)]^2}\ge \|u^\ez\|_{C^0(U)}^4+1,$$
 we have
$$\zeta^4|Du^\ez|^4|_{x=x_0}\le C(\lambda,\Lambda,\|u^\ez\|_{C^0(U)},\dist(V,\partial U)).$$
Hence,
$$\sup_{V}|Du^\ez|^4\le [\sup_{U}  w]^2\le \zeta^4|Du^\ez|^4|_{x=x_0}+ \alpha[ u^\ez(0)]^2\le C(\lambda,\Lambda,\|u^\ez\|_{C^0(U)},\dist(V,\partial U))    $$
as desired.
\end{proof}

To prove Theorem \ref{ex-una} (iii), we need the following Lemma \ref{distance0}, which can be found in \cite[Lemma 3.2 and Lemma 3.4]{fm}.   For each $t>0$, $\dz>0$, $\sz>0$ and $x,y\in U$,
define
$$\mathcal{L}^{\dz}_{\sigma}(x,y):=\inf\Big\{\int^{t}_{0}
\left[\sz+L(\dot{\xi}(s))\right] e^{-\dz(t-s)}\, ds\Big| t>0,\xi\in\mathcal{C}(0,t;x,y;U)\Big\},$$
where   $\mathcal{C}(0,t;x,y; U)$  is the set of all rectifiable curves $\xi:[0,t]\rightarrow  U$ that joins $x $ to $y$,  and
$$L(q) =\sup_{p\in \mathbb{R}^n}\{p\cdot q-H(p)\},\quad \forall q\in\mathbb{R}^n.$$
For each $\sz>0$, we also need to the notion of generalized cones, that is
$$\mathscr C^H_{\sz}(x)=\max_{H(p)=\sz}\{p\cdot x\},\quad \forall x\in U.$$
By the strongly convexity of $H$, one always has that  $$\sqrt{2\sz/\Lambda}|x|\le \mathscr C^H_{\sz}(x)\le  \sqrt{2\sz/\lz}|x|$$

\begin{lem}\label{distance0}
Assume that $H\in C^\fz(\rr^n)$ satisfy (H1){\rm\&}(H2).
 \begin{enumerate}
\item[(i)]
For all $\sz>0$,$\dz\geq0$ and $x,y\in U$,  we have
\begin{equation*}
  \mathscr C^H_\sigma(y-x)\ge \mathcal{L}^\dz_{\sigma}(x,y)\geq0.
\end{equation*}

 \item[(ii)] When $\frac{\dz}{\sigma}\mathcal{L}^{\dz}_{\sigma}(x,y)<\ln \sqrt {2}$, we also have
\begin{equation*}
\mathscr C^H_\sigma (y-x)\leq e^{\frac{4\dz}{\sigma}\mathcal{L}^{\dz}_{\sigma}(x,y)}\mathcal{L}^{\dz}_{\sigma}(x,y).
\end{equation*}

 \item[(iii)] For any
domain $V\Subset \rr^n$ and $x _0\in \overline V$, we have
 $\mathcal{L}^\dz_\sz(x_0,\cdot)$ is a viscosity sup-solution of
$$\mbox{$\mathscr{A}_H (v)=-\frac{\dz \sz}{2}$ in $V\backslash \{x_0\}$  whenever } 0<\dz<\dz_{\sz,V}=\frac{\sz}{2\sup\{\mathscr C^H_{\sz}(y-x):x,y\in \overline V\}}$$

 and
  $\mathcal{L}^\dz_\sz(\cdot, x_0)$ is a viscosity sub-solution of
$$\mbox{$\mathscr{A}_H (v)= \frac{\dz \sz}{2}$ in $V\backslash \{x_0\}$  whenever } 0<\dz<\dz_{\sz,V}.$$
 \end{enumerate}
\end{lem}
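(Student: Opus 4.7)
The plan is to treat (i) and (ii) by direct use of Fenchel--Legendre duality, and (iii) by a standard dynamic programming/viscosity argument for the discounted control problem underlying $\mathcal L^\dz_\sz$.

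For (i), nonnegativity is immediate from $L \ge L(0) = -\min_p H = 0$, a consequence of (H2). For the upper bound, I would pick $p^\ast \in \{H = \sz\}$ realizing $p^\ast \cdot (y - x) = \mathscr C^H_\sz(y-x)$; Lagrange multipliers force $DH(p^\ast) = \mu(y-x)$ for some $\mu > 0$, and the straight-line competitor with constant velocity $DH(p^\ast)$ and duration $T = 1/\mu$ gives, via the Fenchel equality $L(DH(p^\ast)) = p^\ast \cdot DH(p^\ast) - H(p^\ast)$ and the bound $(1 - e^{-\dz T})/\dz \le T$, a total cost of at most $p^\ast \cdot (y - x) = \mathscr C^H_\sz(y-x)$.

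For (ii), for any competitor $(t, \xi)$ joining $x$ to $y$ in $U$ and any $p^\ast \in \{H = \sz\}$, Fenchel's inequality $L(\dot\xi) \ge p^\ast \cdot \dot\xi - \sz$ together with $e^{-\dz(t-s)} \ge e^{-\dz t}$ yields $\mathrm{cost} \ge e^{-\dz t} \mathscr C^H_\sz(y-x)$ after maximizing over $p^\ast$. Separately, the trivial bound $\mathrm{cost} \ge \sz(1 - e^{-\dz t})/\dz$ inverts to $\dz t \le -\ln(1 - \dz\,\mathrm{cost}/\sz)$, and the elementary estimate $-\ln(1 - r) \le 4r$ on $[0, \ln\sqrt 2]$---applicable precisely under the standing hypothesis $\frac{\dz}{\sz}\mathcal L^\dz_\sz < \ln\sqrt 2$---gives $e^{\dz t} \le e^{4\dz\,\mathrm{cost}/\sz}$. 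Combining these bounds along an infimizing sequence of paths yields the claimed inequality.

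For (iii), set $v(y) := \mathcal L^\dz_\sz(x_0, y)$. A standard dynamic programming argument (concatenation with a near-optimal trajectory for one half of the viscosity pair, and extension by an arbitrary velocity $q$ for the other) shows that $v$ is a viscosity solution of the first-order Hamilton--Jacobi equation
\[
H(Dv) + \dz v = \sz \qquad \text{in } V \setminus \{x_0\}.
\]
Formally differentiating HJB yields $D^2 v \cdot DH(Dv) = -\dz Dv$, whence
\[
\mathscr A_H(v) = -\dz \langle Dv, DH(Dv) \rangle \le -\dz H(Dv) = -\dz(\sz - \dz v) \le -\dz\sz/2,
\]
using the Fenchel identity $\langle Dv, DH(Dv) \rangle = H(Dv) + L(DH(Dv)) \ge H(Dv)$ and the bound $\dz v \le \dz \sup_{V \times V} \mathscr C^H_\sz \le \sz/2$ coming from (i) and the restriction $\dz < \dz_{\sz, V}$. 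The reversed value function $\mathcal L^\dz_\sz(\cdot, x_0)$ satisfies the corresponding backward HJB by an identical DPP argument and yields the subsolution inequality. The main obstacle is to upgrade the formal chain computation to a rigorous viscosity statement without classical differentiability of $v$; my plan is to use an inf-convolution regularization, which is semiconcave, satisfies HJB up to errors vanishing with the regularization parameter, and is twice differentiable almost everywhere, perform the computation pointwise on the regularization, and pass to the viscosity limit. Crucially, (H1) ensures that $H$ enters only through $DH$ with uniform bounds and that the Fenchel pairing is stable, so no higher derivatives of $H$ are needed and the limit procedure is clean.
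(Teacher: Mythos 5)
The paper does not prove this lemma: it is quoted from \cite[Lemmas 3.2 and 3.4]{fm}, so there is no in-paper argument to compare against. Your proofs of (i) and (ii) via Fenchel--Legendre duality are correct. For (i), the Lagrange multiplier $\mu$ in $y-x=\mu DH(p^\ast)$ is positive because $DH$ is the outward normal to the convex level set $\{H=\sz\}$, so the straight-line competitor with constant velocity $DH(p^\ast)$ and duration $\mu$ is admissible (at least when the segment stays in $U$, which is the case for the balls the paper actually uses), and Fenchel--Young together with $(1-e^{-\dz T})/\dz\le T$ yields the bound. For (ii), since $\sz+L\ge 0$ you may apply $e^{-\dz(t-s)}\ge e^{-\dz t}$ under the integral, the Fenchel inequality then gives $\mathrm{cost}\ge e^{-\dz t}\mathscr C^H_\sz(y-x)$, and combining with $\mathrm{cost}\ge\sz(1-e^{-\dz t})/\dz$ and the elementary estimate $-\ln(1-r)\le 4r$ on $[0,\ln\sqrt 2]$ along an infimizing sequence gives the claim.

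Part (iii) is the weak spot. The formal derivation $\mathscr A_H(v)=-\dz\langle Dv,DH(Dv)\rangle\le-\dz H(Dv)=-\dz(\sz-\dz v)\le-\dz\sz/2$ (using (i) and $\dz<\dz_{\sz,V}$) is exactly right, but the proposed regularization does not close the gap. Inf-convolution transfers only the supersolution half of the first-order Hamilton--Jacobi equation, whereas your chain rule needs the equality $H(Dv)+\dz v=\sz$; moreover $\mathcal L^\dz_\sz(x_0,\cdot)$ is already semiconcave (standard for Tonelli value functions, and exactly what the paper exploits in the proof of Lemma \ref{com-pr1} via Jensen's lemma \cite[Lemma A.3]{CIL}), so the inf-convolution is both insufficient and unnecessary. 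Even granting the equation pointwise a.e., passing to $D^2v\,DH(Dv)=-\dz Dv$ at an Alexandrov point needs the strong form of Alexandrov's theorem (differentiability of $Dv$ there, not merely a second-order expansion of $v$), which should be invoked. A cleaner route is a second-order Taylor expansion of the dynamic programming principle along the optimal trajectory emanating from a point where a $C^2$ test function touches from below, which gives $\mathscr A_H(\phi)(y_0)\le-\dz\langle D\phi(y_0),DH(D\phi(y_0))\rangle$ directly. Finally, the subsolution half is not ``an identical DPP argument'': time-reversal replaces $L(q)$ by $L(-q)$, hence $H$ by $H(-\cdot)$, and what one actually obtains is that $-\mathcal L^\dz_\sz(\cdot,x_0)$ is a subsolution of $\mathscr A_H(v)=\dz\sz/2$ --- precisely the form the paper uses in the proofs of Lemma \ref{com-pr1} and of Theorem \ref{ex-una}(iii) --- which is not the same as a statement about $\mathcal L^\dz_\sz(\cdot,x_0)$ itself unless $H$ is even. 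This sign discrepancy (present also between the statement of Lemma \ref{distance0}(iii) and its downstream use) should be resolved rather than glossed over.
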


We also need the following comparison principle, see \cite[ Appendix,
Theorem 2]{y06}.
\begin{lem}\label{com-pr}
Assume that $H\in C^\fz(\rr^n)$ satisfy (H1){\rm \&}(H2).
 For any $\sz>0$ and domain $V\subset \rr^n$, assume that $u_1\in C^{0}(\overline V)$ is a viscosity sup-solution of
$$\mathscr{A}_H (u_1)+\ez \Delta u_1=-\dz\quad in \ V$$
and $u_2\in C^{0}(\overline V)$ is a viscosity sub-solution of
$$\mathscr{A}_H (u_2)+\ez \Delta u_2=0\quad in \ V.$$
If either $u_1\in C^{0,1}(V)$ or $u_2\in C^{0,1}(V)$, then
$$\max_{\overline V}(u_2-u_1)\le \max_{\partial V}(u_2-u_1).$$
\end{lem}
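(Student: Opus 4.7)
The plan is to use a standard Crandall--Ishii--Lions doubling-of-variables argument applied to the quasilinear degenerate elliptic operator
\[
F(p,M):=-H_{p_i}(p)H_{p_j}(p)M_{ij}-\ez\,\mathrm{tr}(M),
\]
which is degenerate elliptic (monotone decreasing in $M$) and continuous in $p$ thanks to $H\in C^\fz(\rn)$. The strict inequality $\delta>0$ on the supersolution side will supply the needed slack to close the contradiction without any zeroth-order term.

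First, I would argue by contradiction and assume $M_0:=\max_{\overline V}(u_2-u_1)-\max_{\partial V}(u_2-u_1)>0$. Then the maximum of $u_2-u_1$ is attained at an interior point $x^\ast\in V$. For $\az\gg 1$, consider the doubled function
\[
\Phi_\az(x,y):=u_2(x)-u_1(y)-\frac{\az}{2}|x-y|^2,\qquad (x,y)\in \overline V\times \overline V,
\]
and pick maximizers $(x_\az,y_\az)$. Standard estimates ensure that, for $\az$ large enough, $(x_\az,y_\az)\in V\times V$ and $\az|x_\az-y_\az|^2\to 0$. Here the hypothesis that one of $u_1,u_2$ lies in $C^{0,1}(V)$ comes in decisively: if, say, $u_1$ is Lipschitz with constant $K$, then from $\Phi_\az(x_\az,y_\az)\ge \Phi_\az(x_\az,x_\az)$ one gets $\frac{\az}{2}|x_\az-y_\az|^2\le K|x_\az-y_\az|$, hence $|p_\az|:=|\az(x_\az-y_\az)|\le 2K$, so the penalty gradient stays in a fixed compact set of $\rn$ on which $H_{p_i}H_{p_j}$ is uniformly bounded.

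Next, I would invoke Ishii's lemma (the parabolic/elliptic maximum principle for semicontinuous functions) to produce symmetric matrices $X_\az,Y_\az\in S^n$ such that $(p_\az,X_\az)\in \overline J^{2,+}u_2(x_\az)$, $(p_\az,Y_\az)\in \overline J^{2,-}u_1(y_\az)$, together with the one-sided matrix inequality
\[
\begin{pmatrix} X_\az & 0 \\ 0 & -Y_\az\end{pmatrix}\le 3\az \begin{pmatrix} I & -I \\ -I & I\end{pmatrix},
\]
which, in particular, forces $X_\az\le Y_\az$. The viscosity sub/supersolution inequalities then give
\[
H_{p_i}(p_\az)H_{p_j}(p_\az)(X_\az)_{ij}+\ez\,\mathrm{tr}\,X_\az\ge 0
\]
and
\[
H_{p_i}(p_\az)H_{p_j}(p_\az)(Y_\az)_{ij}+\ez\,\mathrm{tr}\,Y_\az\le -\dz.
\]

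Finally, subtracting the two inequalities yields
\[
H_{p_i}(p_\az)H_{p_j}(p_\az)(X_\az-Y_\az)_{ij}+\ez\,\mathrm{tr}(X_\az-Y_\az)\ge \dz.
\]
Since $X_\az-Y_\az\le 0$ and the coefficient matrix $A:=H_p(p_\az)\otimes H_p(p_\az)+\ez I$ is positive semi-definite (indeed strictly positive thanks to $\ez>0$), the left-hand side satisfies $\mathrm{tr}(A(X_\az-Y_\az))\le 0$, contradicting $\dz>0$. This rules out $M_0>0$ and proves the claim. The main obstacle is the control of the penalty gradient $p_\az$: without a bound on $|p_\az|$ the coefficients $H_{p_i}(p_\az)H_{p_j}(p_\az)$ could be unbounded as $\az\to\fz$, and the contradiction step above would need extra care; the Lipschitz assumption on $u_1$ or $u_2$ is precisely what bypasses this difficulty.
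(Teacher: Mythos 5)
The paper does not give its own proof of this lemma; it is quoted from Yu \cite[Appendix, Theorem 2]{y06}. Your doubling-of-variables/Ishii argument is the standard route for such a comparison principle and it is correct as stated: the operator $F(p,M)=-H_{p_i}(p)H_{p_j}(p)M_{ij}-\ez\,\mathrm{tr}\,M$ is degenerate elliptic, Ishii's lemma yields $X_\az\le Y_\az$, and the strict slack $\dz>0$ on the supersolution side closes the contradiction despite the absence of a zeroth-order term. One point in your exposition is off, though. You claim the $C^{0,1}$ hypothesis is what "bypasses" the difficulty of controlling $p_\az$, but your own argument never uses that control: because $H$ has no explicit $x$-dependence, both touching jets carry the \emph{same} gradient $p_\az=\az(x_\az-y_\az)$, so the coefficient matrix $A(p_\az)=D_pH(p_\az)\otimes D_pH(p_\az)+\ez I$ cancels exactly when you subtract, and the contradiction $\dz\le \mathrm{tr}\bigl(A(p_\az)(X_\az-Y_\az)\bigr)\le 0$ holds for any fixed sufficiently large $\az$, whether or not $|p_\az|$ stays bounded. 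The Lipschitz assumption is inherited from the cited reference (where the Hamiltonian may depend on $x$, so one genuinely must compare $A(x_\az,p_\az)$ with $A(y_\az,p_\az)$ and needs $p_\az$ controlled); in the present $x$-independent setting it is a harmless but inert hypothesis, and attributing a decisive role to it misdescribes the mechanism of your proof.
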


From Lemma \ref{distance0} and \ref{com-pr},  we deduce the following.
\begin{lem}\label{com-pr1}
Assume that $H\in C^\fz(\rr^n)$ satisfy (H1){\rm\&}(H2). For any domain $V\Subset\rn$ and
 $x_0\in \overline V$
  for all   $\sz>0$  and $0<\dz<\dz_{\sz,V}$,
  there exist constant $\mu_1,\mu_2>0 $ depending on $\sz, \dz, H$   such that for all $\ez\in(0,1)$,
$\mathcal{L}^\lambda_\sz(x_0, \cdot)$ is a viscosity sup-solution of
$$\mbox{$\mathscr{A}_H (v)+\ez \Delta v=-\frac{\dz \sz}{2}+\ez n\mu_1$ in $V\backslash \{x_0\}$}$$
and $ \mathcal{  L}^\lambda_\sz(\cdot,x_0)$ is a viscosity sub-solution of
$$\mbox{$\mathscr{A}_{H} (v)+\ez \Delta v=\frac{\dz \sz}{2}-\ez n\mu_2$ in $V\backslash \{x_0\}$.}$$
\end{lem}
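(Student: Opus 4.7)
The plan is to upgrade Lemma \ref{distance0}(iii) by controlling the perturbation $\epsilon\Delta\phi$ for every $C^2$ test function $\phi$ touching $\mathcal{L}^\delta_\sigma$ in $V\setminus\{x_0\}$. In the paper's convention, since the Aronsson operator has positive semi-definite coefficient matrix $H_{p_i}H_{p_j}$, Lemma \ref{distance0}(iii) reads $\mathscr{A}_H(\phi)(y_0)\le -\delta\sigma/2$ at any $\phi$ touching $\mathcal{L}^\delta_\sigma(x_0,\cdot)$ from below at $y_0$, and $\mathscr{A}_H(\phi)(x)\ge \delta\sigma/2$ at any $\phi$ touching $\mathcal{L}^\delta_\sigma(\cdot,x_0)$ from above at $x$. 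The two perturbed claims thus reduce to the one-sided Laplacian bounds $\Delta\phi(y_0)\le n\mu_1$ (for the sup-solution) and $\Delta\phi(x)\ge -n\mu_2$ (for the sub-solution), which I plan to deduce from one-sided second-order (semi-concave/semi-convex) estimates for $\mathcal{L}^\delta_\sigma$.

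For the semi-concavity of $\mathcal{L}^\delta_\sigma(x_0,\cdot)$ I would run the standard variational competitor. Pick a near-optimal trajectory $\xi\colon[0,t_0]\to U$ for $\mathcal{L}^\delta_\sigma(x_0,y_0)$ and form the affine perturbations $\xi_\pm(s)=\xi(s)\pm (s/t_0)h$, which are admissible for $\mathcal{L}^\delta_\sigma(x_0,y_0\pm h)$. By (H1), the Legendre dual $L=H^\ast$ is $C^{1,1}$ with $D^2 L\le \lambda^{-1}I$, so
\[
L(\dot\xi+h/t_0)+L(\dot\xi-h/t_0)-2L(\dot\xi)\le \frac{|h|^2}{\lambda\, t_0^2}.
\]
Integrating this against $e^{-\delta(t_0-s)}\,ds$ and using that $t_0$ is bounded below by a positive constant (by coercivity of $L$ and boundedness of $V$) gives $\mathcal{L}^\delta_\sigma(x_0,y_0+h)+\mathcal{L}^\delta_\sigma(x_0,y_0-h)-2\mathcal{L}^\delta_\sigma(x_0,y_0)\le \mu_1|h|^2$ for some $\mu_1=\mu_1(\sigma,\delta,H)$. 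Comparing Taylor expansions of $\phi\le\mathcal{L}^\delta_\sigma(x_0,\cdot)$ near $y_0$ then forces $D^2\phi(y_0)\le\mu_1 I$, hence $\Delta\phi(y_0)\le n\mu_1$, and the sup-solution claim follows.

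For the semi-convexity of $\mathcal{L}^\delta_\sigma(\cdot,x_0)$ the direct competitor only yields a semi-concave upper bound, so a different input is needed. The plan is to invoke the other half of (H1): the $\Lambda$-smoothness of $H$ gives $L$ the matching strong convexity $D^2 L\ge \Lambda^{-1}I$. Together with the near-conic representation $\mathcal{L}^\delta_\sigma(x,x_0)=\mathscr{C}^H_\sigma(x_0-x)+O(\delta)$ that follows from Lemma \ref{distance0}(i)--(ii), and the convexity of the generalized cone $\mathscr{C}^H_\sigma$ (the support function of the convex set $\{H\le\sigma\}$, hence convex and $C^{1,1}$ off the apex), this will yield the matching lower second-difference bound $\mathcal{L}^\delta_\sigma(x+h,x_0)+\mathcal{L}^\delta_\sigma(x-h,x_0)-2\mathcal{L}^\delta_\sigma(x,x_0)\ge -\mu_2|h|^2$. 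A Taylor comparison then gives $D^2\phi(x)\ge -\mu_2 I$ for any $\phi$ touching from above, hence $\Delta\phi(x)\ge -n\mu_2$, and the sub-solution claim follows.

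The main obstacle will be the semi-convex estimate, since the variational competitor argument is intrinsically one-sided: it only produces a semi-concave (upper) bound regardless of which endpoint is varied. The $\Lambda$-concavity half of (H1) (equivalently, the strong convexity of $L=H^\ast$) is essential for upgrading the semi-concave bound to the matching lower bound required, and this is precisely why the two-sided condition (H1) appears in the hypotheses of the lemma.
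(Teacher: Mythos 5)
Your strategy coincides with the paper's in overall outline: use Lemma \ref{distance0}(iii) for the Aronsson part and control the extra $\ez\Delta\phi$ term by one-sided second-order estimates on $\mathcal{L}^\delta_\sigma$. For the supersolution half your route is in fact a mild simplification of the paper's: you read off $D^2\phi(y_0)\le\mu_1 I$ directly from semiconcavity (comparison with the touching paraboloid), whereas the paper runs the full Jensen--Ishii machinery (Lemma A.3 of [CIL]) to pass to nearby points of twice differentiability, invokes the distributional bound $D^2\mathcal{L}^\delta_\sigma\le\mu_1 I_n$, and then sends $r=\eta\to 0$. Both are acceptable; the paper's version is more careful about whether the pointwise inequality $\mathscr{A}_H(\mathcal{L}^\delta_\sigma)\le-\delta\sigma/2$ from Lemma \ref{distance0}(iii) is genuinely available at the touching point, which your sketch takes for granted.

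Two concrete gaps. First, your claim that ``$t_0$ is bounded below by a positive constant (by coercivity of $L$ and boundedness of $V$)'' is not correct: for a free-time discounted problem the near-optimal time $t_0$ joining $x_0$ to $y_0$ scales like $|y_0-x_0|$, so your competitor estimate $\frac{|h|^2}{\lambda t_0^2}\int_0^{t_0}e^{-\delta(t_0-s)}\,ds\sim\frac{|h|^2}{\lambda t_0}$ degenerates as $y_0\to x_0$; the semiconcavity constant you produce is not uniform on $V\setminus\{x_0\}$. This needs to be handled (the paper simply asserts the distributional bound and does not address it explicitly either), and it is consistent with the fact that $\mathcal{L}^\delta_\sigma(x_0,\cdot)$ has a cone-type singularity at $x_0$ whose positive curvature blows up.

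Second, and more seriously, your subsolution argument does not close. You correctly diagnose that the variational competitor, no matter which endpoint it varies, only yields a semiconcave upper second-difference bound; a subsolution test requires a \emph{lower} bound $D^2\phi\ge -\mu_2 I$ on test functions touching from above, which would follow from semiconvexity of $\mathcal{L}^\delta_\sigma(\cdot,x_0)$, not from semiconcavity. But the fix you sketch does not produce such a bound: Lemma \ref{distance0}(i)--(ii) gives $\mathcal{L}^\delta_\sigma(x,x_0)=\mathscr{C}^H_\sigma(x_0-x)+O(\delta)$ only in the zeroth-order ($C^0$) sense, and $C^0$-closeness to a convex function is very far from giving a lower Hessian bound; the $\Lambda$-bound on $D^2L$ likewise enters the competitor estimate on the wrong side. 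The paper dispatches this half with a one-line ``Similarly, we can prove that $-\mathcal{L}^\delta_\sigma(x,x_0)$ is a viscosity sub-solution\dots'', which conflates $\mathcal{L}^\delta_\sigma(\cdot,x_0)$ and $-\mathcal{L}^\delta_\sigma(\cdot,x_0)$ and does not spell out where the lower Hessian bound comes from; you are right to flag the asymmetry, but your near-conic route is not a valid repair. A correct treatment must either supply a genuine one-sided bound on $D^2\mathcal{L}^\delta_\sigma(\cdot,x_0)$ of the right sign, or reformulate which function is being used as the barrier (and recheck the comparison principle accordingly).
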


\begin{proof}
[Proof of Lemma \ref{com-pr1}]
For any $\phi\in C^2(V)$ and $\mathcal{L}^\dz _\sz(x_0,x)-\phi(x)$ attains its locally minimum
at $y\in V\backslash \{x_0\}$, it suffice to prove that
\begin{equation}\label{com0}
\mathscr{A}_H(\phi)(y)+\ez \Delta \phi(y)\le -\frac{\dz  \sz}{2}+\ez n\mu_1.
\end{equation}
Without loss of generality, we may assume that $\mathcal{L}^\dz _\sz(x_0,x)-\phi$ attains
its  a strictly minimum at $y_0\in V\backslash \{x_0\}$. Since $\mathcal{L}^\dz _\sz(x_0,x)$
is semiconcave, for any $\eta>0$, $r>0$, by Lemma A.3 in \cite{CIL} there exist $x^{r,\eta}\in B(y_0,r)$
and $p^{r,\eta}\in B(0,\eta)$ such that $\mathcal{L}^\dz _\sz(x_0,x)-\phi(x)-
\langle p^{r,\eta},x\rangle$ has a local minimal at $x^{r,\eta}$ and $\mathcal{L}^\dz _\sz(x_0,x)$ is twice differentiable at $x^{r,\eta}$. Also, the semiconcave property of
$\mathcal{L}^\dz _\sz(x_0,x)$ implies that there exists $\mu_1>0$ depending on $\sz,\dz,H$ such that
\begin{equation}\label{com3}
D^2 \mathcal{L}^\dz _\sz(x_0,x)\le \mu_1 I_n
\end{equation}
in the sense of distributions, where $I_n$ is identity matrix. Since $\mathcal{L}^\dz _\sz(x_0,x)$
is twice differentiable at $x^{r,\eta}$, by Lemma \ref {distance0} and \eqref{com3}, we have
\begin{equation}\label{com4}
\langle D^2 \mathcal{L}^\dz _\sz(x_0,x^{r,\eta}) D_pH(D\mathcal{L}^\dz _\sz(x_0,x^{r,\eta})),D_pH(D\mathcal{L}^\dz _\sz(x_0,x^{r,\eta}))\rangle
+\ez \Delta \mathcal{L}^\dz _\sz(x_0,x^{r,\eta})\le
-\frac{\dz \sz }{2}+\ez n \mu_1.
\end{equation}
On the other hand, since $\mathcal{L}^\dz _\sz(x_0,x)-\phi(x)-
\langle p^{r,\eta},x\rangle$ has a local minimal at $x^{r,\eta}$, we have $D\mathcal{L}^\dz _\sz(x_0,x^{r,\eta})=D
\phi(x^{r,\eta})+p^{r,\eta}$ and $D^2\mathcal{L}^\dz _\sz(x_0,x^{r,\eta})
\ge D^2\phi(x^{r,\eta})$. Thus
\begin{align}\label{com5}
&\langle D^2 \mathcal{L}^\dz _\sz(x_0,x^{r,\eta}) D_pH(D\mathcal{L}^\dz _\sz(x_0,x^{r,\eta})),D_pH(D\mathcal{L}^\dz _\sz(x_0,x^{r,\eta}))\rangle
+\ez \Delta \mathcal{L}^\dz _\sz(x_0,x^{r,\eta})\\
&\ge \langle D^2 \phi(x^{r,\eta}) D_pH(D\phi(x^{r,\eta})+p^{r,\eta}),D_pH(D\phi(x^{r,\eta})+p^{r,\eta})\rangle
+\ez \Delta \phi(x^{r,\eta}).\nonumber
\end{align}
Combing \eqref{com4} and \eqref{com5}, we have
\begin{align}\label{com6}
\langle D^2 \phi(x^{r,\eta}) D_pH(D\phi(x^{r,\eta})+p^{r,\eta}),D_pH(D\phi(x^{r,\eta})+p^{r,\eta})\rangle
+\ez \Delta \phi(x^{r,\eta})\le -\frac{\dz \sz }{2}+\ez n \mu_1.
\end{align}
Letting $r=\eta\to 0$ and noting $p^{r,\eta}\to 0$, $x^{r,\eta}\to y_0$,
this leads to the \eqref{com0}.

Similarly, we can prove  that
$-\mathcal{  L}^\dz _\sz(x, x_0)$ is viscosity sub-solution of
$$\mbox{$\mathscr{A}_{H} (v)+\ez \Delta v=\frac{\dz  \sz}{2}-\ez n\mu_2$ in $V\backslash \{x_0\}$.}$$
The proof is complete.
\end{proof}

We are able to prove Theorem 2.2 (iii) as below.

\begin{proof}
[Proof of   Theorem 2.2 (iii).]
Note that  $u\in C^{0,1}(\overline U)$. Letting $\sz>8\Lambda \|u\|^2_{C^{0,1}(\overline U)} $, we have
\begin{equation}\label{claim0}
|u(y)-u(x )|\le \|u\| _{C^{0,1}(\overline U)}|y-x| \le \frac14\mathscr C_\sz^H(y-x),\quad
\forall  x,y\in  U.
\end{equation}
Moreover, there exist $\dz(\sz,U)>0$ such that for all $x,y\in U$ and $\dz<\dz(\sz,U)$,  we have
 $$\frac{\dz}{\sigma}\mathcal{L}^{\dz}_{\sigma}(x,y)<\ln \sqrt {2}$$
and hence, by Lemma \ref{distance0},
\begin{equation*}
\mathscr C^H_\sigma (y-x)\leq e^{\frac{4\dz}{\sigma}\mathcal{L}^{\dz}_{\sigma}(x,y)}\mathcal{L}^{\dz}_{\sigma}(x,y)
\le 4\mathcal{L}^{\dz}_{\sigma}(x,y).
\end{equation*}
By \eqref{claim0}, for all  $\sz>8\Lambda \|u\|^2_{C^{0,1}(\overline U)} $ and $\dz<\dz(\sz,U)$,  we have
\begin{equation}\label{claim}
|u(y)-u(x )|\le  \mathcal{L}^{\dz}_{\sigma}(x,y) ,\quad
\forall  x,y\in  U.
\end{equation}

Note that
$$\mathscr{A}_H (u^\ez)+\ez \Delta u^\ez\ge 0\quad \mbox{in $U $} $$
in viscosity sense  and by Lemma \ref{com-pr1},
$$\mathscr{A}_H (\mathcal{ L}^\dz_\sz(x_0,x))+\ez \Delta \mathcal{ L}^\dz_\sz(x_0,x)\le -\frac{\dz\sz}{2}+\ez n \mu_1\quad \mbox{in $U $}$$
in viscosity sense.
For all  $\sz>8\Lambda \|u\|^2_{C^{0,1}(\overline U)} $ and $\dz<\dz(\sz,U)$
 and if $0<\ez<\frac{\dz\sz}{2n\mu_1}$,
by Lemma \ref{com-pr} we have
\begin{equation}\label{holder2}
u^\ez(x)-u(x_0)\le \mathcal{ L}^\dz_\sz(x_0,x),\quad
\forall x\in U,x_0\in\partial U.
\end{equation}
By similar argument, for all  $\sz>8\Lambda \|u\|^2_{C^{0,1}(\overline U)} $ and $\dz<\dz(\sz,U)$,
if    $0<\ez<\frac{\dz\sz}{2n\mu_2}$,
 we have
$$u^\ez(x)-u(x_0)\ge -\mathcal{  L}^\dz_\sz(x,x_0),\quad
\forall x\in \partial U,x_0\in\partial U.$$

 We therefore conclude that  for    $\sz=8\Lambda \|u\|^2_{C^{0,1}(\overline U)} $ and $\dz<\dz(\sz,U)$
 if $0<\ez<\min\{\frac{\dz\sz}{2n\mu_1},\frac{\dz\sz}{2n\mu_2}\} $,
$$
|u^\ez(x)-u(x_0)|\le  \mathcal{C}^H _\sz(x_0,x),
\quad \forall x\in U,x_0\in\partial U.
$$
Thus, there exist  $\ez_\ast$ and $C$ depending on $U$, $\|u\|_{C^{0,1}(\overline U)}$,  $H$, $\dz,\sz$  such that
for all  $0<\ez<\ez_\ast$, we have
$$
|u^\ez(x)-u(x_0)|\le   C|x-x_0|
\quad \forall x\in U,x_0\in\partial U.
$$
The proof of Theorem 2.2 is complete. \end{proof}

\section{A generalization of Evans-Smart' adjoint method}

Let $H$, $H^\gz$, $u$  $u^\gz$   and $u^{\gz,\ez}$  be as in Section 2.
For convenience, we write $H^\gz$ as $H$, and $u^\gz$ as $u$, $u^{\gz,\ez}$ as $u^\ez$ below.
Let $L_\ez$ be the linearized operator given in \eqref{lin-op},   
and  $L^\ast_\ez$  be its  dual operator, that is,
\begin{align}\label{oper2}
L_\ez^\star(v)&:=-[H_{p_i}(Du^\ez)H_{p_j}(Du^\ez)v]_{x_ix_j}+
2[H_{p_ip_l}(Du^\ez)u^\ez_{x_ix_j}H_{p_j}(Du^\ez)v]_{x_l}-\ez\Delta v
\end{align}
for any $v\in C^\fz (U)$.
Observe that
$$
\int_\rn L_\ez(v)(x)w(x)\,dx= \int_\rn v(x)L^\ast_\ez(w)(x)\,dx\quad\forall v, w\in C^\fz_c (U).$$

Fix a smooth domain $V\Subset U$.
For each point $x_0\in V$, we consider the adjoint problem
 \begin{equation}\label{adj}
 L^\ast_\ez(v)  =\delta_{x_0}  \  {\rm in}\  V; v  =0  \  {\rm on}\ \partial V,
 \end{equation}
where $\delta_{x_0}$ denotes the Dirac measure at $x_0$. Equivalently,
$$
\int_{V} v(x)L_\ez(\phi)(x)\,dx=\phi(x_0)\quad \forall \phi\in C^\fz_0(V);\
 v  =0  \  {\rm on}\ \partial V.$$
 Then we have the following result.
\begin{thm}\label{ex-ad}
For each point $x_0\in V$,
there exists a unique solution $\Theta^\ez\in C^\fz(\overline V\setminus\{x_0\})$
of the linear adjoint problem \eqref{adj} such that $\Theta^\ez\ge 0$ in $V$.
\end{thm}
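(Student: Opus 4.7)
The plan is the standard regularization construction of a fundamental solution for the linear adjoint operator $L^\star_\ez$. Fix a smooth nonnegative mollifier $\eta_\rho\in C^\fz_c(B(x_0,\rho))$ with unit mass, where $\rho$ is small enough that $B(x_0,\rho)\Subset V$. Since $u^\ez\in C^\fz(V)$, all coefficients of $L_\ez$ and of its formal adjoint $L^\star_\ez$ are smooth on $\overline V$, and both principal parts contain $-\ez\Delta$, so both operators are uniformly elliptic with ellipticity constant at least $\ez$. Because $\partial V$ is smooth, classical linear elliptic theory \cite{gt} then produces a unique $\Theta^{\ez,\rho}\in C^\fz(\overline V)$ solving
$$L^\star_\ez(\Theta^{\ez,\rho})=\eta_\rho\ \text{in}\ V,\qquad \Theta^{\ez,\rho}=0\ \text{on}\ \partial V.$$

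The main substantive step is the nonnegativity $\Theta^{\ez,\rho}\ge 0$, which I obtain by duality. For any nonnegative $f\in C^\fz_c(V)$, let $\phi_f\in C^\fz(\overline V)$ be the unique classical solution of $L_\ez(\phi_f)=-f$ in $V$ with $\phi_f=0$ on $\partial V$. Since $L_\ez$ carries no zero-order term and is uniformly elliptic, and $L_\ez(\phi_f)\le 0$, the weak maximum principle gives $\phi_f\le 0$ in $V$. Integration by parts then yields
$$-\int_V \Theta^{\ez,\rho}\,f\,dx=\int_V \Theta^{\ez,\rho}L_\ez(\phi_f)\,dx=\int_V \phi_f L^\star_\ez(\Theta^{\ez,\rho})\,dx=\int_V \phi_f\,\eta_\rho\,dx\le 0,$$
so $\int_V \Theta^{\ez,\rho}f\,dx\ge 0$ for every such $f$, hence $\Theta^{\ez,\rho}\ge 0$ in $V$. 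Taking instead $\phi_0\ge 0$ to be the Dirichlet solution of $L_\ez\phi_0=1$ (nonnegative by the same maximum principle), the same identity yields the uniform $L^1$-mass bound
$$\int_V \Theta^{\ez,\rho}\,dx=\int_V \phi_0\,\eta_\rho\,dx\longrightarrow \phi_0(x_0)\quad\text{as }\rho\to 0.$$

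Finally I pass to the limit $\rho\to 0$. On any compact $K\Subset \overline V\setminus\{x_0\}$, for all sufficiently small $\rho$ the function $\Theta^{\ez,\rho}$ satisfies the homogeneous equation $L^\star_\ez(\Theta^{\ez,\rho})=0$ on a neighborhood of $K$ in $V$; combining the uniform $L^1$-mass bound with the elliptic Harnack inequality (applicable since $\Theta^{\ez,\rho}\ge 0$) in the interior and with classical boundary estimates near $\partial V\cap K$ gives a uniform $L^\fz$-bound on $K$, and interior plus boundary Schauder estimates for the uniformly elliptic operator $L^\star_\ez$ then upgrade this to uniform $C^{k,\alpha}(K)$-bounds for every $k$. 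A diagonal Arzel\`a-Ascoli extraction produces $\rho_j\to 0$ and a limit $\Theta^\ez\in C^\fz(\overline V\setminus\{x_0\})$ which is nonnegative, vanishes on $\partial V$, and satisfies $L^\star_\ez(\Theta^\ez)=\delta_{x_0}$ distributionally on $V$. Uniqueness follows by Fredholm duality: the difference $w$ of two such solutions is a distributional $L^1$-solution of $L^\star_\ez w=0$ on all of $V$ with $w|_{\partial V}=0$, hence smooth on $V$ by elliptic regularity, and then identically zero by the unique solvability of the homogeneous Dirichlet problem for $L_\ez$ (granted by its weak maximum principle). I expect the only delicate step in the whole argument to be the nonnegativity via duality, where the absence of a zero-order term in $L_\ez$ is essential.
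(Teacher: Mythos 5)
Your proposal is correct and follows essentially the same route as the paper: existence of a Green's function $\Theta^\ez$ for $L^\star_\ez$ via standard linear elliptic theory (the paper invokes the Fredholm alternative after noting that $L_\ez w=0$, $w|_{\partial V}=0$ forces $w\equiv0$), and nonnegativity via duality with the primal Dirichlet problem for $L_\ez$ combined with the weak maximum principle, which crucially uses that $L_\ez$ has no zeroth-order term. The only difference is cosmetic: you make the existence step explicit through mollification, a uniform $L^1$ mass bound, Harnack/Schauder estimates away from $x_0$, and an Arzel\`a--Ascoli extraction, whereas the paper subsumes all of this under ``standard linear elliptic PDE theory,'' and you run the duality with $L_\ez\phi_f=-f$ (getting $\phi_f\le0$) where the paper uses $L_\ez\omega^\ez=f$ (getting $\omega^\ez\ge0$); these are the same argument up to a sign.
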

\begin{proof}
Consider problem
 \begin{equation*}
L_\ez (w) =0 \quad {\rm in}\  V;
w =0 \quad {\rm on}\ \partial V.
\end{equation*}
By  Theorem \ref{ex-una}, there exists a unique solution
$\omega\equiv 0$ on $\overline V$. So that $0$ is not an eigenvalue of the operator $L_\ez$,
and hence $0$ is   not an eigenvalue of $L_\ez^\star$.
Applying standard linear elliptic PDE theory, there exists smooth
Green's function $\Theta^\ez\in C^\fz(\overline B(0,2)\backslash\{x_0\})$.
Next we show that $\Theta^\ez\ge 0$. For any $f\in C^\fz (V)$ and
$f\ge 0$ in $V$, we introduce  the solution $\omega^\ez$ of the linear boundary value problem
 \begin{equation}\label{eq0}
L_\ez (\omega^\ez) =f \quad {\rm in}\  V;
\omega^\ez =0 \quad {\rm on}\ \partial V.
\end{equation}
By Theorem \ref{ex-una}, we know that
there exists a unique solution $0\le \omega^\ez\in C^\fz(V)$.
Multiply the equation in \eqref{eq0} by $\Theta^\ez$, we have
\begin{align*}
&\int_Vf\Theta^\ez\,d x=\int_VL_\ez(\omega^\ez )\Theta^\ez\,d x\\
&=\int_V [-H_{p_i}(Du^\ez)H_{p_j}(Du^\ez)\omega^\ez_{x_ix_j}-2H_{p_j}(Du^\ez)
H_{p_ip_l}(Du^\ez)u^\ez_{x_ix_j}\omega^\ez_{x_l}-\ez \omega^\ez_{x_ix_i}]\Theta^\ez\,d x.
\end{align*}
By integration by parts, $\omega^\ez|_{\partial V}=0$ and $\Theta^\ez|_{\partial V}=0$, we have
\begin{align*}\
&\int_V -H_{p_i}(Du^\ez)H_{p_j}(Du^\ez) \omega^\ez_{x_ix_j}\Theta^\ez\,dx\\
&\quad=
\int_V -(H_{p_i}(Du^\ez)H_{p_j}(Du^\ez)\Theta^\ez)_{x_ix_j}\omega^\ez\,dx -\int_{\partial V}H_{p_i}(Du^\ez)H_{p_j}(Du^\ez)\Theta^\ez\omega^\ez_{x_j} \cos(\overrightarrow{N},x_i)\,ds\\
&\quad\quad+\int_{\partial V} (H_{p_i}(Du^\ez)H_{p_j}(Du^\ez)\Theta^\ez)_{x_i}\omega^\ez \cos(\overrightarrow{N},x_j)\,ds\\
&\quad=\int_V -(H_{p_i}(Du^\ez)H_{p_j}(Du^\ez)\Theta^\ez)_{x_ix_j}\omega^\ez\,dx,
\end{align*}
where $\overrightarrow{N}$ denotes the outward pointing unit normal along $\partial V$.
By similar calculation, which lead to
\begin{align*}
&\int_Vf\Theta^\ez\,d x=\int_VL_\ez^\star(\Theta^\ez)\omega^\ez \,d x
=\omega^\ez(x_0)\ge 0.
\end{align*}
Since for all $f\ge 0$ holds, that is $\Theta^\ez\ge 0$.
\end{proof}

\begin{lem} Denote by
$\overrightarrow{N}$ denotes the outward pointing unit normal along $\partial V$. Then
 $$\cos(\overrightarrow{N},x_i)=-\frac{\Theta^\ez_{x_i}}{|D\Theta^\ez|}\quad\forall i=1,\cdots,n.$$
\end{lem}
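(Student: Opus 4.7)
The plan is to extract the identity from two simple structural features of $\Theta^\ez$ recorded in Theorem~\ref{ex-ad}: its vanishing on $\partial V$ and its non-negativity throughout $V$. The only non-trivial ingredient is guaranteeing $|D\Theta^\ez|\neq 0$ on $\partial V$, which will come from Hopf's boundary point lemma.

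First I would fix $y\in\partial V$ and choose an orthonormal frame $\{\tau^{(1)},\ldots,\tau^{(n-1)},\overrightarrow{N}(y)\}$ with the first $n-1$ vectors spanning $T_y(\partial V)$. Since $\Theta^\ez\in C^\fz(\overline V\setminus\{x_0\})$ and $\Theta^\ez\equiv 0$ along $\partial V$, all tangential derivatives vanish at $y$, so $D\Theta^\ez(y)=c(y)\,\overrightarrow{N}(y)$ for a scalar $c(y)$. To fix the sign of $c(y)$, I use $\Theta^\ez\ge 0$ on $V$ and $\Theta^\ez(y)=0$: for small $t>0$, $y-t\overrightarrow{N}(y)\in V$, so $\Theta^\ez(y-t\overrightarrow{N}(y))\ge 0$; dividing by $t$ and letting $t\downarrow 0$ gives $-D\Theta^\ez(y)\cdot\overrightarrow{N}(y)\ge 0$, i.e., $c(y)\le 0$.

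To exclude the borderline case $c(y)=0$, I apply Hopf's lemma to $\Theta^\ez$. Expanded in non-divergence form on $V\setminus\{x_0\}$, the operator $L_\ez^\star$ has principal part
\begin{equation*}
-\bigl(H_{p_i}(Du^\ez)H_{p_j}(Du^\ez)+\ez\,\dz_{ij}\bigr)\partial_{x_ix_j},
\end{equation*}
which is uniformly elliptic with ellipticity constant $\ez$, plus smooth lower-order coefficients. Since $L_\ez^\star(\Theta^\ez)=0$ on $V\setminus\{x_0\}$ and $\Theta^\ez\ge 0$ is not identically zero there (the latter is forced by the defining property $\int_V \Theta^\ez L_\ez(\phi)\,dx=\phi(x_0)$ for every $\phi\in C^\fz_c(V)$), the strong maximum principle yields $\Theta^\ez>0$ throughout $V\setminus\{x_0\}$, and Hopf's lemma at $y$ then gives $\partial_{\overrightarrow{N}}\Theta^\ez(y)<0$, i.e., $c(y)<0$. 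Combining the three observations, $D\Theta^\ez(y)=-|D\Theta^\ez(y)|\,\overrightarrow{N}(y)$, and taking the $i$-th component yields $\cos(\overrightarrow{N},x_i)=N_i(y)=-\Theta^\ez_{x_i}(y)/|D\Theta^\ez(y)|$, as claimed.

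The main obstacle is purely technical: the zero-order coefficient of $L_\ez^\star$ arising from the double differentiation in \eqref{oper2} has no fixed sign, so the standard forms of the strong maximum principle and Hopf's lemma do not apply verbatim. This is handled routinely by the usual device of multiplying $\Theta^\ez$ by an exponential factor $e^{\kappa\cdot x}$ to absorb the zero-order term into a non-positive one, reducing to the textbook statement; uniform ellipticity of the principal part (with constant $\ez>0$) and smoothness of all coefficients on $\overline V\setminus\{x_0\}$ make this invocation legitimate.
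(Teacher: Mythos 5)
The paper states this lemma without proof, so there is nothing to compare against directly; your argument is the natural one and the overall structure is sound. You correctly isolate the three ingredients: tangential derivatives of $\Theta^\ez$ vanish along $\partial V$ (so $D\Theta^\ez(y)\parallel \overrightarrow{N}(y)$), nonnegativity of $\Theta^\ez$ forces the normal derivative to be $\le 0$, and Hopf's lemma upgrades this to $<0$. You also correctly flag that the zeroth-order coefficient of $L_\ez^\star$, produced by the second derivatives falling on $H_{p_i}(Du^\ez)H_{p_j}(Du^\ez)$, has no sign, so the textbook Hopf lemma does not apply verbatim.

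The one flaw is the device you propose for that last technical point. Multiplying $\Theta^\ez$ by $e^{\kappa\cdot x}$ does \emph{not} make the zeroth-order coefficient nonpositive: writing the equation in the form $a_{ij}\partial_{ij}+b_i\partial_i+c$ with $a_{ij}=H_{p_i}H_{p_j}+\ez\dz_{ij}$ and substituting $\Theta^\ez=e^{-\kappa\cdot x}w$, the new zeroth-order coefficient is $a_{ij}\kappa_i\kappa_j-b_i\kappa_i+c$. Since $(a_{ij})$ is positive definite this is a positive-definite quadratic in $\kappa$, so it becomes \emph{more} positive for large $\kappa$, and in general cannot be made $\le 0$. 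The correct routine device is truncation of the zeroth-order coefficient: replace $c$ by $c^+=\max(c,0)$, and observe that because $\Theta^\ez\ge 0$ one has $(a_{ij}\partial_{ij}+b_i\partial_i-c^+)\Theta^\ez=(a_{ij}\partial_{ij}+b_i\partial_i-c)\Theta^\ez+(c-c^+)\Theta^\ez=c^-\Theta^\ez\le 0$, so $\Theta^\ez$ is a supersolution of an operator whose zeroth-order coefficient $-c^+$ is nonpositive; the strong minimum principle and Hopf's lemma then apply in their standard forms. With that replacement, your proof goes through as written.
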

 We have the following  connection of between operator $L_\ez$ and $\Theta_\ez$.
\begin{lem}\label{L-L}
For any $v\in C^\fz(\overline V)$, we have
\begin{align*}
\int_V L_\ez (v)\Theta^\ez\,d x+\int_{\partial V}v\rho^\ez\,ds=v(x_0),
\end{align*}
where dentes
$$\rho^\ez:=\frac{\langle D_p H(Du^\ez),D\Theta^\ez\rangle^2}{|D\Theta^\ez|}+\ez|D\Theta^\ez|.$$

\end{lem}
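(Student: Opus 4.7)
The plan is to carry out a direct integration by parts on the identity $\int_V L_\ez(v)\Theta^\ez\,dx$, transferring the derivatives from $v$ onto $\Theta^\ez$ so as to produce $\int_V v\, L_\ez^\star(\Theta^\ez)\,dx$, and then collecting the resulting boundary integrals on $\partial V$. Since $\Theta^\ez$ solves the adjoint problem \eqref{adj}, the interior integral $\int_V v\, L_\ez^\star(\Theta^\ez)\,dx$ will equal $v(x_0)$, and the calculation reduces to identifying the boundary integral as $-\int_{\partial V} v\rho^\ez\,ds$.

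To handle the singularity of $\Theta^\ez$ at $x_0$, I first work on $V_\eta:=V\setminus \overline{B(x_0,\eta)}$, where $\Theta^\ez\in C^\fz(\overline{V_\eta})$. On this set I integrate the three terms of
$$L_\ez(v)=-H_{p_i}(Du^\ez)H_{p_j}(Du^\ez)v_{x_ix_j}-2H_{p_ip_l}(Du^\ez)u^\ez_{x_ix_j}H_{p_j}(Du^\ez)v_{x_l}-\ez\Delta v$$
by parts. For the first term I integrate twice (in $x_j$, then $x_i$); for the second, once (in $x_l$); for the third, twice (Green's identity). Each time the boundary contribution on $\partial V$ that still involves $\Theta^\ez$ (not $D\Theta^\ez$) drops out because $\Theta^\ez\equiv0$ on $\partial V$. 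The contribution over $\partial B(x_0,\eta)$ converges, as $\eta\to0$, to $v(x_0)$ by the Green's function property (equivalently, by the distributional identity $L_\ez^\star\Theta^\ez=\delta_{x_0}$).

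The surviving boundary terms on $\partial V$ come from two sources. The second integration by parts of the second-order term produces
$$\int_{\partial V}H_{p_i}(Du^\ez)H_{p_j}(Du^\ez)\Theta^\ez_{x_j}N_i\,v\,ds,$$
and the Green's identity on $-\ez\Delta v$ produces $-\int_{\partial V}\ez v\,D\Theta^\ez\cdot\vec N\,ds$; the first-order term contributes no boundary integral. Applying the preceding lemma, on $\partial V$ we have $\Theta^\ez_{x_i}=-|D\Theta^\ez|N_i$, hence $\langle D_pH,D\Theta^\ez\rangle\cdot\langle D_pH,\vec N\rangle=-\langle D_pH(Du^\ez),D\Theta^\ez\rangle^2/|D\Theta^\ez|$ and $D\Theta^\ez\cdot\vec N=-|D\Theta^\ez|$. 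Summing, the boundary contribution on $\partial V$ equals
$$-\int_{\partial V}\left[\frac{\langle D_pH(Du^\ez),D\Theta^\ez\rangle^2}{|D\Theta^\ez|}+\ez|D\Theta^\ez|\right]v\,ds=-\int_{\partial V}v\rho^\ez\,ds.$$
Rearranging yields the claimed identity.

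There is no real obstacle here beyond careful bookkeeping: one must keep track of which boundary terms vanish at each stage (all those multiplied by $\Theta^\ez$ itself on $\partial V$), handle the singularity at $x_0$ by cutting it out and passing to the limit, and apply the unit-normal formula $\vec N=-D\Theta^\ez/|D\Theta^\ez|$ to recognise $\rho^\ez$. A minor point worth checking is that $|D\Theta^\ez|>0$ on $\partial V$, so that this formula is meaningful; this follows from Hopf's lemma applied to the nonnegative solution $\Theta^\ez$ of $L_\ez^\star(\Theta^\ez)=0$ near $\partial V$ (since $x_0\in V$ lies away from $\partial V$).
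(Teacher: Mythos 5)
Your proposal follows essentially the same approach as the paper: integrate $\int_V L_\ez(v)\Theta^\ez\,dx$ by parts to shift derivatives onto $\Theta^\ez$, discard boundary terms that carry a factor of $\Theta^\ez$ itself (which vanishes on $\partial V$), recognise the interior limit as $v(x_0)$ via $L_\ez^\star\Theta^\ez=\delta_{x_0}$, and identify the surviving boundary integral using $\vec N=-D\Theta^\ez/|D\Theta^\ez|$. Two small points. First, your intermediate formula for the Laplacian boundary contribution has a sign slip: Green's identity with $\Theta^\ez|_{\partial V}=0$ gives $\int_V(-\ez\Delta v)\Theta^\ez\,dx=\int_V v(-\ez\Delta\Theta^\ez)\,dx+\ez\int_{\partial V}v\,D\Theta^\ez\cdot\vec N\,ds$, i.e.\ with a $+$ sign, which then combines with $D\Theta^\ez\cdot\vec N=-|D\Theta^\ez|$ to give the term $-\ez\int_{\partial V}v|D\Theta^\ez|\,ds$; with the $-$ sign you wrote, the final summation would not come out right, so this must be a transcription error since your final formula is stated correctly. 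Second, your extra care about excising $B(x_0,\eta)$ and your Hopf-lemma justification that $|D\Theta^\ez|>0$ on $\partial V$ are both reasonable refinements the paper passes over silently, and they add value to the argument.
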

\begin{proof}
By  integrate by parts and $\Theta^\ez|_{\partial V}=0$, we have
\begin{align*}
\int_V& L_\ez (v)\Theta^\ez\,d x\\
&=
\int_V [-H_{p_i}(Du^\ez)H_{p_j}(Du^\ez)v_{x_ix_j}-2H_{p_j}(Du^\ez)
H_{p_ip_l}(Du^\ez)u^\ez_{x_ix_j}v_{x_l}-\ez v_{x_ix_i}]\Theta^\ez\,d x\\
&=\int_V [-(H_{p_i}(Du^\ez)H_{p_j}(Du^\ez)\Theta^\ez)_{x_ix_j}+2(H_{p_j}(Du^\ez)H_{p_ip_l}
(Du^\ez)u^\ez_{x_ix_j}\Theta^\ez)_{x_l} -\ez (\Theta^\ez)_{x_ix_i}]v\,d x\\
&\quad+\int_{\partial V}v[(H_{p_i}(Du^\ez)H_{p_j}(Du^\ez)\Theta^\ez)_{x_i}+\ez(\Theta^\ez)_{x_j}]\cos(\overrightarrow{N},x_j) \,ds,
\end{align*}
where $\overrightarrow{N}$ denotes the outward pointing unit normal along $\partial V$.
Note that $\cos(N,x_i)=-\Theta^\ez_{x_i}/|D\Theta^\ez|$ and $\Theta^\ez|_{\partial V}=0$, we have
\begin{align*}
\int_V L_\ez (v)\Theta^\ez\,d x&=
\int_V L_\ez^\star(\Theta^\ez)v\,d x-\int_{\partial V}v\left[\frac{\langle D_p H(Du^\ez),D\Theta^\ez\rangle^2}{|D\Theta^\ez|}+\ez |D\Theta^\ez|\right]\,ds.
\end{align*}
Denote
$$\rho^\ez:=\frac{\langle D_p H(Du^\ez),D\Theta^\ez\rangle^2}{|D\Theta^\ez|}+\ez |D\Theta^\ez|,$$
this complete proof of the Lemma \ref{L-L}.
\end{proof}

Since $L_\ez(1)=0$ in $V$, the following follows from Lemma \ref{L-L} obviously.

\begin{cor}\label{es00} We have
 $$
\int_{\partial V}\rho^\ez\,ds=1.
$$
\end{cor}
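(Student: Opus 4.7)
The plan is extremely short: the corollary is an immediate specialization of the identity proved in Lemma \ref{L-L} to the constant test function $v\equiv 1$. Looking at the linearized operator
\[
L_\ez (v)=-H_{p_i}(Du^\ez)H_{p_j}(Du^\ez)v_{x_ix_j}-2H_{p_ip_l}(Du^\ez)u^\ez_{x_ix_j}H_{p_j}(Du^\ez)v_{x_l}-\ez\Delta v,
\]
every term is a pure first- or second-order derivative of $v$. Hence $L_\ez(1)=0$ pointwise in $V$ (no $0$-order term is present in the PDE, which corresponds to the fact that the absolute minimizer equation and the Laplacian are both invariant under addition of constants).

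First I would simply note that the constant function $v\equiv 1$ belongs to $C^\infty(\overline V)$, so Lemma \ref{L-L} applies and yields
\[
\int_V L_\ez(1)\,\Theta^\ez\,dx+\int_{\partial V}1\cdot \rho^\ez\,ds = 1(x_0)=1.
\]
Since $L_\ez(1)\equiv 0$, the first integral vanishes, and the conclusion
\[
\int_{\partial V}\rho^\ez\,ds=1
\]
follows.

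There is essentially no obstacle; the only thing worth remarking on is that the identity is exactly the ``total mass one'' property one expects of the adjoint problem \eqref{adj} together with the natural boundary measure produced by integration by parts. Indeed, testing $L^\ast_\ez(\Theta^\ez)=\delta_{x_0}$ against $v\equiv 1$ in the distributional sense gives $\langle \delta_{x_0},1\rangle=1$, while the boundary contribution coming from the two integrations by parts (which is precisely what was collected into the density $\rho^\ez$ in the proof of Lemma \ref{L-L}) equals $\int_{\partial V}\rho^\ez\,ds$. So the corollary can also be read as saying that $\rho^\ez\,ds$ is a probability measure on $\partial V$, a fact that will be used repeatedly in the flatness arguments of Section~5.
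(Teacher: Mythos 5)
Your argument is exactly the paper's: apply Lemma \ref{L-L} with $v\equiv 1$, observe $L_\ez(1)=0$ because the linearized operator has no zeroth-order term, and read off the identity. Correct and identical in spirit to the one-line proof given in the text.
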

Letting $v=H(Du^\ez)$ in Lemma \ref{L-L}, we also have the following.
\begin{lem}\label{es0} We have
\begin{align*}
 &\int_V [\langle D^2_{pp}H(Du^\ez) D[H(Du^\ez)],D[H(Du^\ez)]\rangle+\ez  H_{p_kp_s}(Du^\ez)u^\ez_{x_kx_i}u^\ez_{x_sx_i}]\Theta^\ez\,d x\le  \|H(Du^\ez)\|_{L^\fz(V)}.
\end{align*}
\end{lem}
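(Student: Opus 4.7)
The plan is to apply Lemma \ref{L-L} with the test function $v = H(Du^\ez)$ and simplify the resulting $L_\ez(H(Du^\ez))$ into the quadratic form on the left-hand side, after which a boundary bound via Corollary \ref{es00} will finish things.

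First, by the chain rule,
$$
v_{x_l}=H_{p_k}(Du^\ez)u^\ez_{x_kx_l},\qquad v_{x_ix_j}=H_{p_kp_s}(Du^\ez)u^\ez_{x_sx_i}u^\ez_{x_kx_j}+H_{p_k}(Du^\ez)u^\ez_{x_kx_ix_j}.
$$
Plugging these into \eqref{lin-op}, the terms carrying third derivatives of $u^\ez$ collect into $-H_{p_k}(Du^\ez)[H_{p_i}H_{p_j}u^\ez_{x_kx_ix_j}+\ez\Delta u^\ez_{x_k}]$. Since $u^\ez_{x_k}$ lies in the kernel of $L_\ez$, differentiating the PDE \eqref{apeq2} replaces the bracketed expression by the bilinear quantity $-2H_{p_ip_l}(Du^\ez)u^\ez_{x_lx_k}H_{p_j}(Du^\ez)u^\ez_{x_ix_j}$. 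Using the identity $H_{p_i}(Du^\ez)u^\ez_{x_kx_i}=D_{x_k}[H(Du^\ez)]$ one sees that both this term and the two remaining second-derivative contributions collapse into multiples of $\langle D^2_{pp}H(Du^\ez)D[H(Du^\ez)],D[H(Du^\ez)]\rangle$. A careful bookkeeping of coefficients (one contribution of $+2$ from the third-derivative elimination, one of $-1$ from the first $H_{p_i}H_{p_j}v_{x_ix_j}$ term, and one of $-2$ from the drift term) yields the clean identity
$$
L_\ez(H(Du^\ez))=-\bigl[\langle D^2_{pp}H(Du^\ez)D[H(Du^\ez)],D[H(Du^\ez)]\rangle+\ez H_{p_kp_s}(Du^\ez)u^\ez_{x_kx_i}u^\ez_{x_sx_i}\bigr].
$$

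Second, inserting this identity into Lemma \ref{L-L} and rearranging,
$$
\int_V\bigl[\langle D^2_{pp}H(Du^\ez)D[H(Du^\ez)],D[H(Du^\ez)]\rangle+\ez H_{p_kp_s}(Du^\ez)u^\ez_{x_kx_i}u^\ez_{x_sx_i}\bigr]\Theta^\ez\,dx=\int_{\partial V}H(Du^\ez)\rho^\ez\,ds-H(Du^\ez(x_0)).
$$
By (H2) we have $H(Du^\ez(x_0))\ge 0$, while $\rho^\ez\ge 0$ and $\int_{\partial V}\rho^\ez\,ds=1$ by Corollary \ref{es00}. Bounding $H(Du^\ez)$ pointwise on $\partial V$ by $\|H(Du^\ez)\|_{L^\fz(V)}$ then gives exactly the claim.

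The calculation is essentially algebraic once one commits to tracking indices, so the main obstacle is simply executing the contraction $H_{p_i}u^\ez_{x_kx_i}=D_{x_k}[H(Du^\ez)]$ consistently and checking the cancellation of third-order terms via the differentiated equation; the conceptual input (nonnegativity of $H$, the adjoint identity, Corollary \ref{es00}) is already available.
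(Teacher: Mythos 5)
Your proposal is correct and follows essentially the same route as the paper: apply Lemma \ref{L-L} with $v=H(Du^\ez)$, compute $L_\ez(H(Du^\ez))$, use $L_\ez(u^\ez_{x_k})=0$ contracted against $H_{p_k}$ to eliminate third-order derivatives (which also cancels the drift contribution; in fact the $+2$ and $-2$ pieces in your bookkeeping cancel directly before combining with the $-1$), arriving at the identity $L_\ez(H(Du^\ez))=-\langle D^2_{pp}H\,D[H(Du^\ez)],D[H(Du^\ez)]\rangle-\ez H_{p_kp_s}u^\ez_{x_kx_i}u^\ez_{x_sx_i}$, and then bound the boundary term via $\rho^\ez\ge 0$, Corollary \ref{es00}, and $H(Du^\ez(x_0))\ge 0$ from (H2). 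This matches the paper's argument step for step.
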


\begin{proof}
By Lemma \ref{L-L},
$$-\int_V L_\ez(H(Du^\ez))\Theta^\ez\,dx
 =\int_{\partial V}H(Du^\ez)\rho^\ez\,ds-H(Du^\ez(x_0)) \le  \|H(Du^\ez)\|_{L^\fz(V)}.$$
Write
\begin{align*}
L_\ez(H(Du^\ez))
&=-H_{p_i}(Du^\ez)H_{p_j}(Du^\ez)[H_{p_kp_s}(Du^\ez)u^\ez_{x_kx_i}
u^\ez_{x_sx_j}+H_{p_k}(Du^\ez)u^\ez_{x_kx_ix_j}]\\
&\quad -2H_{p_j}(Du^\ez)H_{p_ip_l}(Du^\ez)
u^\ez_{x_ix_j}H_{p_k}(Du^\ez)u^\ez_{x_kx_l}\\
&\quad-\ez [H_{p_kp_s}(Du^\ez)u^\ez_{x_kx_i}
u^\ez_{x_sx_i}+H_{p_k}(Du^\ez)u^\ez_{x_kx_ix_i}].
\end{align*}
Since $ L_\ez(u^\ez_{x_s})=0$, we have
\begin{align}\label{EQ3.2}
L_\ez(H(Du^\ez))&=-H_{p_kp_s}(Du^\ez)[H_{p_i}(Du^\ez)H_{p_j}(Du^\ez)u^\ez_{x_kx_i}u^\ez_{x_sx_j}
+\ez u^\ez_{x_kx_i}
u^\ez_{x_sx_i}]\\
&=-\langle D^2_{pp}H(Du^\ez) D[H(Du^\ez)],D[H(Du^\ez)]\rangle-\ez  H_{p_kp_s}(Du^\ez)u^\ez_{x_kx_i}u^\ez_{x_sx_i} \nonumber
\end{align}
as desired.
\end{proof}

We further need   an exponential estimate.
\begin{lem}\label{es7}
Moreover, for all $0<\mu<\frac{\lambda }{8n}$ we have
 \begin{align*}
&\int_{\partial V}\ez e^{\frac{\mu}{\ez}[H(Du^\ez(x_0))-H(Du^\ez)]}\rho^\ez\,ds\\
&\quad\quad+\mu\int_Ve^{\frac{\mu}{\ez}[H(Du^\ez(x_0))-H(Du^\ez)]}
  \langle D^2_{pp}H(Du^\ez)D[H(Du^\ez)],D[H(Du^\ez)]\rangle \Theta^\ez\,dx\\
 &\quad\quad+\ez\mu \int_Ve^{\frac{\mu}{\ez}[H(Du^\ez(x_0))-H(Du^\ez)]}  H_{p_kp_s}(Du^\ez)u^\ez_{x_sx_i}u^\ez_{x_kx_i} \Theta^\ez\,dx\\
 &\quad\le 2\ez.
\end{align*}

\end{lem}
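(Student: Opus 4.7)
My plan is to apply Lemma \ref{L-L} to the test function $v=\ez F$ with $F=\exp(\mu[H(Du^\ez(x_0))-H(Du^\ez)]/\ez)$, and then to absorb the two ``bad'' quadratic terms produced by the chain rule into the two ``good'' strongly convex quadratic forms that appear simultaneously. Writing $g=H(Du^\ez)$ so that $g_{x_l}=H_{p_k}u^\ez_{x_lx_k}$, a direct differentiation of $F$ combined with the identity \eqref{EQ3.2} for $L_\ez(g)$ gives, after a short computation,
\begin{align*}
\ez L_\ez(F)=\mu F\bigl[\langle D^2_{pp}H\,Dg,Dg\rangle+\ez H_{p_kp_s}u^\ez_{x_kx_i}u^\ez_{x_sx_i}\bigr]-\tfrac{\mu^2}{\ez}F\langle D_pH,Dg\rangle^2-\mu^2 F|Dg|^2.
\end{align*}
Applying Lemma \ref{L-L} to $v=\ez F$ and noting $F(x_0)=1$, I obtain the working identity
\begin{align*}
\int_V L_\ez(\ez F)\,\Theta^\ez\,dx+\int_{\partial V}\ez F\rho^\ez\,ds=\ez.
\end{align*}

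The key observation is an algebraic identity coming directly from the equation \eqref{apeq2}: since $H_{p_i}H_{p_j}u^\ez_{x_ix_j}+\ez\Delta u^\ez=0$, one has
\begin{align*}
\langle D_pH(Du^\ez),Dg\rangle=H_{p_i}(Du^\ez)H_{p_j}(Du^\ez)u^\ez_{x_ix_j}=-\ez\Delta u^\ez.
\end{align*}
Hence the problematic term transforms as $\tfrac{\mu^2}{\ez}\langle D_pH,Dg\rangle^2=\mu^2\ez(\Delta u^\ez)^2\le n\mu^2\ez|D^2u^\ez|^2$, the last step being the eigenvalue Cauchy-Schwarz $(\operatorname{tr}A)^2\le n|A|^2$. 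Coupling this with the strong-convexity bounds $|D^2u^\ez|^2\le\lambda^{-1}H_{p_kp_s}u^\ez_{x_kx_i}u^\ez_{x_sx_i}$ and $|Dg|^2\le\lambda^{-1}\langle D^2_{pp}H\,Dg,Dg\rangle$ from (H1), the assumption $\mu<\lambda/(8n)$ yields the pointwise estimate
\begin{align*}
\tfrac{\mu^2}{\ez}\langle D_pH,Dg\rangle^2+\mu^2|Dg|^2\le\tfrac{1}{2}\mu\bigl[\langle D^2_{pp}H\,Dg,Dg\rangle+\ez H_{p_kp_s}u^\ez_{x_kx_i}u^\ez_{x_sx_i}\bigr],
\end{align*}
i.e.\ the bad terms are dominated by half of the good ones.

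Consequently $L_\ez(\ez F)\ge\tfrac{\mu}{2}F\bigl[\langle D^2_{pp}H\,Dg,Dg\rangle+\ez H_{p_kp_s}u^\ez_{x_kx_i}u^\ez_{x_sx_i}\bigr]$; inserting this into the identity from Lemma \ref{L-L} and multiplying through by $2$ gives
\begin{align*}
\int_V\mu F\bigl[\langle D^2_{pp}H\,Dg,Dg\rangle+\ez H_{p_kp_s}u^\ez_{x_kx_i}u^\ez_{x_sx_i}\bigr]\Theta^\ez\,dx+2\int_{\partial V}\ez F\rho^\ez\,ds\le 2\ez,
\end{align*}
from which the claim follows after discarding the extra nonnegative boundary contribution $\int_{\partial V}\ez F\rho^\ez\,ds$. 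The main obstacle is recognizing the PDE-based identity $\langle D_pH,Dg\rangle=-\ez\Delta u^\ez$: without it, naive Cauchy-Schwarz on $\langle D_pH,Dg\rangle^2$ would produce uncontrolled factors of $|D_pH|$ together with an intractable $1/\ez$ prefactor. The dimensional constant $n$ in the hypothesis $\mu<\lambda/(8n)$ enters precisely through the inequality $(\Delta u^\ez)^2\le n|D^2u^\ez|^2$, and this is why the bound is stated in terms of $\lambda,\Lambda,n$ only.
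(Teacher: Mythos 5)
Your proof is correct and follows essentially the same route as the paper: the paper sets $\phi(r)=\ez e^{\mu(\alpha_\ez-r)/\ez}$ and applies Lemma \ref{L-L} to $v=(\phi\circ H)(Du^\ez)$, which is exactly your $\ez F$, computes $L_\ez(\phi\circ H)$ via the chain rule and the identity $\langle D_pH,D[H(Du^\ez)]\rangle=-\ez\Delta u^\ez$ from the PDE, and then absorbs the two quadratic error terms into the strongly convex forms using $\mu<\lambda/(8n)$, exactly as you do. The only difference is presentational (you make the role of the PDE identity and the trace--norm inequality $(\Delta u^\ez)^2\le n|D^2u^\ez|^2$ more explicit).
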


 \begin{proof}
Let
$$\phi(r)=\ez e^{\frac{\mu}{\ez}[\alpha_\ez-r]}
\quad {\rm and} \ \alpha_\ez :=H(Du^\ez(x_0)),$$
where $0<\mu\le \frac{\lambda }{8n}$.
Similarly to \eqref{EQ3.2} we have
\begin{align*}
L_\ez((\phi\circ H)(Du^\ez))&=-(\phi\circ H)_{p_kp_s}(Du^\ez)[H_{p_i}(Du^\ez)H_{p_j}(Du^\ez)u^\ez_{x_kx_i}u^\ez_{x_sx_j}
+\ez u^\ez_{x_kx_i}
u^\ez_{x_sx_i}]\\
&=-\langle D^2_{pp}H(Du^\ez) D[H(Du^\ez)],D[H(Du^\ez)]\rangle-\ez  (\phi\circ H)_{p_kp_s}(Du^\ez)u^\ez_{x_kx_i}u^\ez_{x_sx_i}.
\end{align*}
Since
$$(\phi\circ H)_{p_kp_s}=(\phi''\circ H)H_{p_k}H_{p_s}+ (\phi'\circ H)H_{p_kp_s} $$
 and $\mathscr A_H[u^\ez]=-\ez\Delta u^\ez$, we get
\begin{align*}&L_\ez((\phi\circ H) (Du^\ez) )\\
&=[(\phi''\circ H)(Du^\ez)H_{p_k}(Du^\ez)H_{p_s}(Du^\ez)+ (\phi'\circ H)(Du^\ez)H_{p_kp_s}(Du^\ez)]\\
&\quad\times[H_{p_i}(Du^\ez)H_{p_j}(Du^\ez)u^\ez_{x_kx_i}u^\ez_{x_sx_j}
+\ez u^\ez_{x_kx_i}
u^\ez_{x_sx_i}] \\
&=  - (\phi' \circ H)(Du^\ez)\langle D^2_{pp}H(Du^\ez)D[H(Du^\ez)],D[H(Du^\ez)]\rangle
-(\phi'' \circ H)(Du^\ez)   \ez |D H(Du^\ez)|^2\\
&\quad -\ez (\phi' \circ H)(Du^\ez) H_{p_kp_s}(Du^\ez)u^\ez_{x_sx_i}u^\ez_{x_kx_i}- \phi''(H(Du^\ez))  \ez^2(\Delta u^\ez )^2.
\end{align*}

Note that
$$\phi'(r)=  -  {\mu}  e^{\frac{\mu}{\ez}[\alpha_\ez-r]}\le0, \phi''(r)= \frac {\mu^2}{\ez} e^{\frac{\mu}{\ez}[\alpha_\ez-r]}\ge0.$$
Since the strongly convexity of $H$ implies $$|p|^2\le\frac1\lz \langle D^2_{pp}H(Du^\ez)p,p\rangle,$$
by $[1-     \frac{n\mu}  \lambda]\ge 1/2$ we have
\begin{align}\label{EQ3.3}L_\ez((\phi\circ H)(Du^\ez) )
&\ge   \frac {\mu}2 e^{\frac{\mu}{\ez}[\alpha_\ez-H(Du^\ez)]}  \langle D^2_{pp}H(Du^\ez)D[H(Du^\ez)],D[H(Du^\ez)]\rangle
\\
&\quad +\ez\frac\mu2   e^{\frac{\mu}{\ez}[\alpha_\ez-H(Du^\ez)]}   H_{p_kp_s}(Du^\ez)u^\ez_{x_sx_i}u^\ez_{x_kx_i}\nonumber.
\end{align}

Since Lemma \ref{L-L} implies
$$\int_{\partial V} (\phi\circ H)(Du^\ez) \rho^\ez\,ds+
 \int_{V} L_\ez((\phi\circ H)(Du^\ez) )\Theta^\ez\,dx=(\phi\circ H)(Du^\ez(x_0)) = \ez.$$
  We obtain the desired estimate.
\end{proof}

Applying Lemma \ref{es7}, we will get the following upper bound.
\begin{lem}\label{es10}
We have
\begin{align*}
&\int_V[H(Du^\ez)]^2\Theta^\ez\,dx\le  C( \lambda,\Lambda,\eta)[1+\|u^\ez\|_{L^\fz(V)}]\left[1+  \|H(Du^\ez)\|_{L^\fz(V)}\right]+  C( \lambda,\Lambda)\eta^2 \int_V\Theta^\ez\,dx.
\end{align*}

\end{lem}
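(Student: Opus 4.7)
The strategy is to compare $[H(Du^\ez)]^2$ with the quadratic form $A^{ij}u^\ez_{x_i}u^\ez_{x_j}$, where $A^{ij}:=H_{p_i}(Du^\ez)H_{p_j}(Du^\ez)$, and then exploit the adjoint identity of Lemma \ref{L-L} with the test function $v=(u^\ez)^2/2$. The pointwise subgradient inequality $H(p)\le\langle D_pH(p),p\rangle$, valid by convexity of $H$ together with $H(0)=\min H=0$ (hypothesis (H2)), gives at $p=Du^\ez$
\begin{equation*}
[H(Du^\ez)]^2\le\langle D_pH(Du^\ez),Du^\ez\rangle^2=A^{ij}u^\ez_{x_i}u^\ez_{x_j}.
\end{equation*}
A direct application of the product rule to $L_\ez$ produces the identity
\begin{equation*}
L_\ez\bigl((u^\ez)^2/2\bigr)=u^\ez L_\ez(u^\ez)-A^{ij}u^\ez_{x_i}u^\ez_{x_j}-\ez|\nabla u^\ez|^2.
\end{equation*}
Integrating against $\Theta^\ez$, invoking Lemma \ref{L-L} with $v=(u^\ez)^2/2$, and using Corollary \ref{es00} ($\int_{\partial V}\rho^\ez\,ds=1$) to bound the boundary contribution, one arrives at
\begin{equation*}
\int_V A^{ij}u^\ez_{x_i}u^\ez_{x_j}\Theta^\ez\,dx \le \int_V u^\ez L_\ez(u^\ez)\Theta^\ez\,dx + \tfrac12\|u^\ez\|_{L^\fz(V)}^2,
\end{equation*}
where the boundary/source contribution will eventually be absorbed into the $C(\lambda,\Lambda,\eta)[1+\|u^\ez\|_{L^\fz}][1+\|H(Du^\ez)\|_{L^\fz}]$ term.

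The heart of the argument is the bound on $\int_V u^\ez L_\ez(u^\ez)\Theta^\ez\,dx$. Since $\mathscr A_H(u^\ez)+\ez\Delta u^\ez=0$, a direct computation identifies $L_\ez(u^\ez)=-2\langle D^2_{pp}H(Du^\ez)\,Du^\ez,\,D[H(Du^\ez)]\rangle$. Viewing $D^2_{pp}H$ as a positive semi-definite inner product, Cauchy--Schwarz together with the strong concavity estimate $\langle D^2_{pp}H\,Du^\ez,Du^\ez\rangle\le\Lambda|Du^\ez|^2\le\tfrac{2\Lambda}{\lambda}H(Du^\ez)$ (the latter from strong convexity and $H(0)=0$) delivers the pointwise bound
\begin{equation*}
|u^\ez L_\ez(u^\ez)|\le 2\|u^\ez\|_{L^\fz(V)}\sqrt{\tfrac{2\Lambda}{\lambda}H(Du^\ez)}\cdot\sqrt{\langle D^2_{pp}H\,DH,\,DH\rangle}.
\end{equation*}
Young's inequality, applied with a parameter to be tuned as a function of $\eta$, now splits this product so that after integrating against $\Theta^\ez$ the $\langle D^2_{pp}H\,DH,DH\rangle$-factor is controlled by Lemma \ref{es0} ($\int_V\langle D^2_{pp}H\,DH,DH\rangle\Theta^\ez dx\le\|H(Du^\ez)\|_{L^\fz}$), while the $H(Du^\ez)$-factor is bounded by $\|H(Du^\ez)\|_{L^\fz}\int_V\Theta^\ez\,dx$.

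The main obstacle is the algebraic bookkeeping: one must calibrate the Young's inequality parameter so that the coefficient of $\int_V\Theta^\ez\,dx$ comes out as $C(\lambda,\Lambda)\eta^2$ with a constant depending only on the structural bounds, while the complementary $\eta$-singular contributions, together with the $\|u^\ez\|_{L^\fz}^2$ boundary term, collapse into $C(\lambda,\Lambda,\eta)[1+\|u^\ez\|_{L^\fz(V)}][1+\|H(Du^\ez)\|_{L^\fz(V)}]$. A crucial feature of this scheme is that only $D^2_{pp}H$ appears anywhere in the chain of estimates; no third-order derivative of $H$ is ever invoked, so the resulting constants depend only on $\lambda$, $\Lambda$, and $\eta$, as required by the statement of the lemma.
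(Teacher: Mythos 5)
Your proposal follows the same route as the paper: same test function $v=\tfrac12(u^\ez)^2$, same pointwise identity $L_\ez\bigl(\tfrac12(u^\ez)^2\bigr)=u^\ez L_\ez(u^\ez)-\langle D_pH(Du^\ez),Du^\ez\rangle^2-\ez|Du^\ez|^2$ with $L_\ez(u^\ez)=-2\langle D^2_{pp}H(Du^\ez)D[H(Du^\ez)],Du^\ez\rangle$, same convexity inequality $[H(Du^\ez)]^2\le\langle D_pH(Du^\ez),Du^\ez\rangle^2$, same Cauchy--Schwarz in the $D^2_{pp}H$-metric, and the same appeal to Lemma \ref{L-L}, Corollary \ref{es00} and Lemma \ref{es0}. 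So the skeleton is correct and identical to the paper's.

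There is, however, a concrete gap at the final absorption step, and the way you state it it would not produce the claimed form. After Cauchy--Schwarz and Young your $\eta$-weighted term is, up to constants, $\eta\,\tfrac{2\Lambda}{\lambda}H(Du^\ez)$; you then say this is controlled by $\|H(Du^\ez)\|_{L^\fz}\int_V\Theta^\ez\,dx$. But that yields a contribution $C(\lambda,\Lambda)\,\eta\,\|H(Du^\ez)\|_{L^\fz(V)}\int_V\Theta^\ez\,dx$, which is \emph{linear} in $\eta$ and carries a $\|H(Du^\ez)\|_{L^\fz}$ factor -- not the required $C(\lambda,\Lambda)\,\eta^2\int_V\Theta^\ez\,dx$ with a constant independent of $u^\ez$. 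Your remark that ``one must calibrate the Young's inequality parameter'' acknowledges the issue but does not resolve it: no choice of Young parameter applied to the product $\sqrt{H(Du^\ez)}\cdot\sqrt{\langle D^2_{pp}H\,DH,DH\rangle}$ alone yields an $\eta^2$-coefficient on $\int_V\Theta^\ez\,dx$ with constant independent of the solution, because $H(Du^\ez)$ enters that product only to the first power.

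The missing step -- and what the paper does -- is one more application of Young's inequality that trades $H(Du^\ez)$ for $[H(Du^\ez)]^2$ and a pure constant, followed by absorption into the left-hand side: from $\langle D^2_{pp}H(Du^\ez)Du^\ez,Du^\ez\rangle\le\Lambda|Du^\ez|^2\le\tfrac{2\Lambda}{\lambda}H(Du^\ez)$ one writes
\begin{equation*}
\eta\,\langle D^2_{pp}H(Du^\ez)Du^\ez,Du^\ez\rangle\ \le\ \eta\,\tfrac{2\Lambda}{\lambda}H(Du^\ez)\ \le\ \tfrac12[H(Du^\ez)]^2+C(\lambda,\Lambda)\,\eta^2,
\end{equation*}
integrates against $\Theta^\ez$, and absorbs $\tfrac12\int_V[H(Du^\ez)]^2\Theta^\ez\,dx$ into the left-hand side of the inequality you derived from Lemma \ref{L-L}. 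This delivers precisely $C(\lambda,\Lambda)\,\eta^2\int_V\Theta^\ez\,dx$ with a constant free of $\|u^\ez\|_{L^\fz}$ and $\|H(Du^\ez)\|_{L^\fz}$. (The paper also keeps $|u^\ez|^2$ glued to the $\langle D^2_{pp}H\,DH,DH\rangle$-side rather than factoring out $\|u^\ez\|_{L^\fz}$ first, which is a cosmetic difference.) With this extra Young step your argument closes.
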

\begin{proof}

By
  $\mathscr{A}_H (u^\ez)+\ez \Delta u^\ez=0$, a direct calculation implies that
\begin{align*}
L_\ez(\frac12(u^\ez)^2)
&=- [\langle D_p H(Du^\ez),Du^\ez\rangle^2+\ez|Du^\ez|^2]-2 u^\ez\langle D^2_{pp}H(Du^\ez)D[H(Du^\ez)], Du^\ez \rangle.
\end{align*}

By the    convexity of $H$ and $H(0)=0$, we have
$$\langle D_p H(Du^\ez),Du^\ez\rangle^2\ge [H(Du^\ez) ]^2.$$
By the Young's inequality, we have
\begin{align*}
&2 u^\ez\langle D^2_{pp}H(Du^\ez)D[H(Du^\ez)], Du^\ez \rangle\\
&\quad\le  C(\eta)
 |u^\ez|^2\langle D^2_{pp}H(Du^\ez)D[H(Du^\ez)],D[H(Du^\ez)]\rangle+ \eta \langle D^2_{pp}H(Du^\ez) Du^\ez , Du^\ez \rangle.
\end{align*}
By the  strongly concavity of $H$, we have
\begin{align*}
 \eta\langle D^2_{pp}H(Du^\ez) Du^\ez , Du^\ez \rangle
&\le \eta
\frac{2\Lambda}{\lambda} H(Du^\ez)\le \frac12[H(Du^\ez)]^2 +C(\lambda,\Lambda)\eta^2.
\end{align*}
Thus
\begin{align*} \int_V[H(Du^\ez) ]^2\Theta^\ez\,dx
&\le
-\int_V L_\ez(\frac12(u^\ez)^2)\Theta^\ez\,dx+ \frac12\int_V[H(Du^\ez)]^2\Theta^\ez\,dx+ C(\lambda,\Lambda)\eta\int_V \Theta^\ez\,dx\\
&\quad+
C(\eta)\int_V|u^\ez|^2\langle D^2_{pp}H(Du^\ez)D[H(Du^\ez)],D[H(Du^\ez)]\rangle\Theta^\ez\,dx
\end{align*}
By Lemma \ref{L-L},
$$-\int_V L_\ez(\frac12(u^\ez)^2)\Theta^\ez\,dx\le 2\|u^\ez\|_{L^\fz(V)},$$
and hence,
\begin{align*}\int_V[H(Du^\ez) ]^2\Theta^\ez\,dx
&\le C(\eta,\lambda,\Lambda)[1+\|u^\ez\|_{L^\fz(V)}^2][1+  \|H(Du^\ez)\|_{L^\fz(V)}]+ C(\lambda,\Lambda)\eta^2\int_V \Theta^\ez\,dx.
\end{align*}

\end{proof}

Moreover, we also need an integral estimate of $\Theta^\ez$.
\begin{lem}\label{exp} Let $x_0\in V$ and $\alpha_\ez:= H(Du^\ez(x_0))>0$.

(i)For any $0<\mu<\frac{\lambda }{8n}$ and $0<\bz< H(Du^\ez(x_0))$,  we have
\begin{align*}
\int_{V\cap \{H\le \beta\}}\Theta^\ez\,dx\le C(\lambda,\Lambda)\frac{1}{\ez}e^{\frac{\mu[\beta-H(Du^\ez(x_0))]}{\ez}}.
\end{align*}

  (ii) If $\liminf_{\ez\to 0} H(Du^\ez(x_0))\ge \alpha>0$, we have
 $$ \int_{V}\Theta^\ez\,dx\le C(\lambda,\Lambda)\frac1\ez  e^{-\frac{\mu \alpha  }{2\ez}}++C( \lambda,\Lambda,\|u\|_{L^\fz(V)},{\rm dist}(V,\partial U))\frac1{\alpha^2}.$$

\end{lem}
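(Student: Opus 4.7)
The proof will be by reducing (ii) to (i) via a level-set decomposition of $V$, and proving (i) with an exponential test function inserted into the adjoint identity Lemma~\ref{L-L}.

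For (ii), the hypothesis $\liminf_{\ez\to 0} H(Du^\ez(x_0))\ge\alpha$ gives $\alpha_\ez:=H(Du^\ez(x_0))\ge\alpha$ for all sufficiently small $\ez$. I would then decompose
\[
\int_V\Theta^\ez\,dx \;=\; \int_{V\cap\{H\le\alpha/2\}}\Theta^\ez\,dx \;+\; \int_{V\cap\{H>\alpha/2\}}\Theta^\ez\,dx,
\]
apply (i) with $\beta=\alpha/2$ to the first integral to get a contribution $\le \frac{C(\lambda,\Lambda)}{\ez}e^{-\mu\alpha/(2\ez)}$, and control the second integral by the Chebyshev bound $\mathbf{1}_{\{H>\alpha/2\}}\le 4H(Du^\ez)^2/\alpha^2$ combined with Lemma~\ref{es10}, choosing $\eta$ of order $\alpha$ so that the $\eta^2\int_V\Theta^\ez$ term on the right-hand side of Lemma~\ref{es10} can be absorbed into $\int_V\Theta^\ez$ on the left. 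The uniform $L^\infty$ bounds on $u^\ez$ and $H(Du^\ez)$ needed for Lemma~\ref{es10} are provided by Theorem~\ref{ex-una}(ii) and Lemma~\ref{abso}.

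For (i), the key test function is
\[
v(x):=\ez\,e^{\mu[\beta-H(Du^\ez(x))]/\ez},
\]
which satisfies $v(x_0)=\ez\,e^{\mu(\beta-\alpha_\ez)/\ez}$, the pointwise inequality $v\ge\ez$ on $V\cap\{H\le\beta\}$, and (crucially) $L_\ez(v)\ge 0$ in $V$. The last property is verified by the same chain-rule computation used in the proof of Lemma~\ref{es7}: expanding $L_\ez(\phi\circ H(Du^\ez))$ with $\phi(r)=\ez e^{\mu(\beta-r)/\ez}$, using $\mathscr{A}_H[u^\ez]=-\ez\Delta u^\ez$ to eliminate third-order derivatives of $u^\ez$, and absorbing the negative contributions $\phi''\ez[|DH|^2+\ez(\Delta u^\ez)^2]$ by means of strong convexity ($|DH|^2\le\lambda^{-1}\langle D^2_{pp}H\,DH,DH\rangle$) and strong concavity ($(\Delta u^\ez)^2\le n\lambda^{-1}H_{p_kp_s}u^\ez_{x_kx_i}u^\ez_{x_sx_i}$). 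The hypothesis $\mu<\lambda/(8n)$ then yields
\[
L_\ez(v)\;\ge\;\tfrac{\mu}{2\ez}\,v\,\langle D^2_{pp}H\,DH,DH\rangle \;+\;\tfrac{\mu}{2}\,v\,H_{p_kp_s}u^\ez_{x_kx_i}u^\ez_{x_sx_i}\;\ge\;0.
\]
Inserting $v$ into Lemma~\ref{L-L} produces the identity
\[
\int_V L_\ez(v)\Theta^\ez\,dx \;+\; \int_{\partial V}v\,\rho^\ez\,ds \;=\; \ez\,e^{\mu(\beta-\alpha_\ez)/\ez},
\]
which combined with $\ez\,\mathbf{1}_{\{H\le\beta\}}\le v$ is the main input for bounding $\int_{V\cap\{H\le\beta\}}\Theta^\ez$.

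The hard part is that Lemma~\ref{L-L} directly bounds $\int_V L_\ez(v)\Theta^\ez$ and the boundary term, but not $\int_V v\,\Theta^\ez$; and the positive lower bound on $L_\ez(v)$ displayed above degenerates at points where $DH(Du^\ez)=0$ and $D^2u^\ez=0$, so one cannot simply bound $L_\ez(v)$ below by a positive multiple of $v$. To bridge this I expect to combine the identity above with the weighted integral estimates from Lemma~\ref{es7} restricted to $\{H\le\beta\}$ (where the exponential weight supplies the decay factor $e^{-\mu(\alpha_\ez-\beta)/\ez}$), possibly via a Poincar\'e- or Caccioppoli-type argument on sublevel sets of $H(Du^\ez)$ that converts the gradient bounds into bounds on the measure $\Theta^\ez\,dx$. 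Tracking the $\ez$-scaling carefully so that the $\frac{1}{\ez}$ prefactor emerges correctly, while keeping all constants dependent only on $\lambda$ and $\Lambda$ (as required by Theorem~\ref{Fla}), is the most delicate point.
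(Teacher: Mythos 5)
Your reduction of part (ii) to part (i) via the decomposition of $V$ into $\{H(Du^\ez)\le\alpha/2\}$ and its complement, the Chebyshev bound on the second piece, Lemma~\ref{es10} with $\eta\sim\alpha$ to absorb the $\eta^2\int\Theta^\ez$ term, and the uniform bounds from Theorem~\ref{ex-una}(ii), is exactly the paper's argument for (ii) and is correct.

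For part (i), however, you have an unresolved gap that you yourself correctly flag. Your test function $v=\ez\,e^{\mu[\beta-H(Du^\ez)]/\ez}$ leads, via Lemma~\ref{L-L} and the chain-rule expansion in Lemma~\ref{es7}, only to a bound on
$\int_V\bigl[\langle D^2_{pp}H\,DH,DH\rangle+\ez H_{p_kp_s}u^\ez_{x_kx_i}u^\ez_{x_sx_i}\bigr]e^{\mu[\beta-H]/\ez}\Theta^\ez\,dx$
(and on the boundary term), but the integrand vanishes whenever $D^2u^\ez=0$, so one cannot extract a lower bound of the form $c\int_{V\cap\{H\le\beta\}}\Theta^\ez$. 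Your proposed fixes (weighted estimates on sublevel sets, a Poincar\'e- or Caccioppoli-type argument) are not filled in, and there is no visible route along those lines that produces the stated $\frac{1}{\ez}$ prefactor with constants depending only on $\lambda,\Lambda$.

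The missing idea in the paper is to take
$v(x)=(\phi\circ H)(Du^\ez(x))\,|x|^2$ with $\phi(r)=\ez e^{\mu(\alpha_\ez-r)/\ez}$,
i.e.\ to multiply your exponential weight by $|x|^2$. Expanding $L_\ez$ of the product, the key new term is $(\phi\circ H)(Du^\ez)\,L_\ez(|x|^2)$, and a direct computation gives
$L_\ez(|x|^2)=-2\bigl[|D_pH(Du^\ez)|^2+\ez n\bigr]-2\langle D^2_{pp}H(Du^\ez)\,D[H(Du^\ez)],D|x|^2\rangle$.
The fixed, non-degenerate contribution $-2\ez n$ is what rescues the estimate: after absorbing all remaining terms into the ``good'' quantity bounded by Lemma~\ref{es7} via Young's inequality, one obtains
$\int_V(\phi\circ H)(Du^\ez)\bigl[|D_pH(Du^\ez)|^2+\ez n\bigr]\Theta^\ez\,dx\le C(\lambda,\Lambda)\,\ez$.
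In particular $\ez^2\int_V e^{\mu[\alpha_\ez-H(Du^\ez)]/\ez}\Theta^\ez\,dx\le C(\lambda,\Lambda)\,\ez$, and restricting the integral to $\{H(Du^\ez)\le\beta\}$, where the exponential weight is at least $e^{\mu(\alpha_\ez-\beta)/\ez}$, yields the asserted bound
$\int_{V\cap\{H\le\beta\}}\Theta^\ez\,dx\le C(\lambda,\Lambda)\,\ez^{-1}e^{\mu(\beta-\alpha_\ez)/\ez}$. Without the $|x|^2$ factor the argument cannot close, which is precisely the degeneracy you identified.
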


\begin{proof}
For each $0<\mu<\frac{\lambda }{8n}$, define
$$\phi(r)=\ez e^{\frac{\mu}{\ez}(\alpha_\ez-r)} $$
and set  $v(x)=(\phi\circ H)(Du^\ez(x)) |x|^2$.  By Lemma \ref{L-L},
$$
 \int_V L_\ez(v)\Theta^\ez\,dx=    v(x_0)- \int_{\partial V}v\rho^\ez\,ds.   $$
Then
\begin{align*}
L_\ez(v)&=(\phi\circ H)(Du^\ez)  L_\ez(|x|^2)+|x|^2L_\ez((\phi\circ H)(Du^\ez))\\
&\quad-
2\langle D_p H(Du^\ez),D(\phi\circ H)(Du^\ez)\rangle\langle D_p H(Du^\ez),D|x|^2\rangle-2\ez \langle D(\phi\circ H)(Du^\ez),D|x|^2\rangle.
\end{align*}
Write $$K:=   e^{\frac{\mu}{\ez}[\alpha_\ez-H(Du^\ez)]}
[\langle D^2_{pp}H(Du^\ez)D[H(Du^\ez)],D[H(Du^\ez)]\rangle+ \ez H_{p_kp_s}(Du^\ez)u^\ez_{x_sx_i}u^\ez_{x_kx_i}].$$
By \eqref{EQ3.3}, we have
\begin{align*}|x|^2 L_\ez((\phi\circ H)(Du^\ez) )
&\le    4  \mu K.
\end{align*}

Note that
\begin{align*}
L_\ez(|x|^2)&=-H_{p_i}(Du^\ez)H_{p_j}(Du^\ez)(|x|^2)_{x_ix_j}-2H_{p_j}(Du^\ez)
H_{p_ip_l}(Du^\ez)u^\ez_{x_ix_j}(|x|^2)_{x_l}-\ez(|x|^2)_{x_ix_i}\\
&=-2[|D_p H(Du^\ez)|^2+\ez n]-2\langle D^2_{pp} H(Du^\ez) D[H(Du^\ez)],D|x|^2\rangle
\end{align*}
and hence by the Young's inequality,
\begin{align*}
 (\phi\circ H)(Du^\ez)L_\ez(|x|^2)
&\le -2(\phi\circ H)(Du^\ez) [ |D_pH(Du^\ez)|^2 +\ez n]+ C(\lambda,\Lambda) K + \frac\ez 8  (\phi\circ H)(Du^\ez)
\end{align*}

Using  $\langle D_p H(Du^\ez),D [H(Du^\ez)] \rangle=\mathscr A_H[u^\ez]=-\ez\Delta u^\ez  $ we also have
\begin{align*}
&-
2\langle D_p H(Du^\ez),D(\phi\circ H)(Du^\ez)\rangle\langle D_p H(Du^\ez),D|x|^2\rangle-2\ez \langle D(\phi\circ H)(Du^\ez),D|x|^2\rangle\\
&\quad= -
2(\phi'\circ H)(Du^\ez) [\langle D_p H(Du^\ez),D [H(Du^\ez)] \rangle\langle D_p H(Du^\ez),D|x|^2\rangle+\ez \langle D [H(Du^\ez)] ,D|x|^2\rangle]\\
&\quad= -
2(\phi'\circ H)(Du^\ez)[ -\ez\Delta u^\ez\langle D_p H(Du^\ez),D|x|^2\rangle+\ez \langle D [H(Du^\ez)] ,D|x|^2\rangle,
\end{align*}
by
$\phi'(r)=  -  {\mu}  e^{\frac{\mu}{\ez}[\alpha_\ez-H(Du^\ez)]}\le0$ and Young's inequality,  which is bounded  by
\begin{align*}
 C(\lambda,\Lambda) \mu K +\frac18(\phi\circ H)(Du^\ez) |D_pH(Du^\ez)|^2 + \frac\ez 8  (\phi\circ H)(Du^\ez).
\end{align*}

Thus $$L_\ez(v)\le - (\phi\circ H)(Du^\ez) [ |D_pH(Du^\ez)|^2 +\ez n]+ C(\lambda,\Lambda) \mu K.$$

Therefore, applying Lemma \ref{es7}  we get
\begin{align*}
&  \int_V  (\phi\circ H)(Du^\ez) [ |D_pH(Du^\ez)|^2 +\ez n]\Theta^\ez\,dx\\
&\quad  \le 4\int_{\partial V}(\phi\circ H)(Du^\ez)  \rho^\ez\,ds
 +C(\lambda,\Lambda)\int_V \mu  K\Theta^\ez\,dx   \le C(\lambda,\Lambda)\ez \nonumber.
\end{align*}
We conclude that
\begin{align*}
&\ez^2\int_V e^{\frac{\mu[\alpha_\ez-H(Du^\ez)]}{\ez}} \Theta^\ez\,dx\le
C(\lambda,\Lambda) \ez ,
\end{align*}
 and hence
\begin{align}\label{exp3}
\int_{V\cap \{H(Du^\ez)\le \beta\}}\Theta^\ez\,dx\le C(\lambda,\Lambda)\frac1\ez  e^{-\frac{\mu[\alpha_\ez-\beta]}{\ez}}.
\end{align}
This implies that
\begin{align*}
\int_{V}\Theta^\ez\,dx&=\int_{V\cap \{H(Du^\ez)\le \frac{\alpha}{2}\}}\Theta^\ez\,dx+
\int_{V\cap \{H(Du^\ez)> \frac{\alpha}{2}\}}\Theta^\ez\,dx\\
&\le  C(\lambda,\Lambda)\frac1\ez  e^{-\frac{\mu[2\alpha_\ez-\az ]}{2\ez}}+
  \frac4{\alpha^2}\int_{V }  [H(Du^\ez)]^2  \Theta^\ez\,dx.
\end{align*}

Let $C_0(\lambda,\Lambda)\eta^2 \frac4{\alpha^2}=\frac12 $, that is, $\eta^2=  \frac {\alpha^2} {8C_0(\lambda,\Lambda)}$. Apply Lemma \ref{es10}, Theorem \ref{ex-una} and $\alpha_\ez\ge \alpha$ for all
$\ez\in (0,1]$, we have
$$ \int_{V}\Theta^\ez\,dx\le C(\lambda,\Lambda)\frac1\ez  e^{-\frac{\mu\az }{2\ez}}+C( \lambda,\Lambda,\|u\|_{L^\fz(V)},{\rm dist}(V,\partial U))\frac1{\alpha^2}.$$
\end{proof}
\section{Proof of Theorem \ref{Fla}}

Let $U=B(0,3)$ and $V=B(0,2)$ in this section. Let $H$, $H^\gz$, $u$  $u^\gz$   and $u^{\gz,\ez}$  be as in Section 2.
For convenience, we write $H^\gz$ as $H$, and $u^\gz$ as $u$, $u^{\gz,\ez}$ as $u^\ez$ below.

%
 Note that the condition \eqref{cond1} and Theorem \ref{ex-una} implies that  $$\sup_{U}|u^\ez|\le 4\quad\mbox{ and  } \sup_{V}|Du^\ez|\le C(\lambda,\Lambda).$$
 Moreover, let $L_\ez$ and $\Theta^\ez$ is given in Theorem \ref{ex-ad}. The condition \eqref{H(Du)1} implies that Lemma \ref{exp} (ii) holds, that is $$\int_V\Theta^\ez\,dx\le C(\lambda,\Lambda).$$
  The proof of Theorem \ref{Fla} is then divided into 3 steps.

\medskip
\noindent {\it Step 1.} We first show that
$$\int_{V}[H(e_n)-H(Du^\ez)]_+\Theta^\ez\,dx\le C(\lambda,\Lambda)[\dz+\frac1\ez  e^{ -\frac{\mu}{\ez}\dz  }].$$
Here and below $f_+=\max\{f,0\}$.
Observe that
 \begin{align*}
\int_{V}[H(e_n)-H(Du^\ez)]_+\Theta^\ez\,dx &=\int_{V\cap \{H(Du^\ez)\le H(e_n)-2\delta\}}[H(e_n)-H(Du^\ez)]\Theta^\ez\,dx\\
   &\quad+\int_{V\cap \{H(e_n)-2\delta\le H(Du^\ez)\le H(e_n) \}}[H(e_n)-H(Du^\ez)]\Theta^\ez\,dx.
 \end{align*}
By Lemma \ref{exp} (i), we have
\begin{align*}
 \int_{V\cap \{H(Du^\ez)\le H(e_n)-2\delta\}}[H(e_n)-H(Du^\ez)]\Theta^\ez\,dx\le   H(e_n) \frac1\ez  e^{ \frac{\mu}{\ez} [H(e_n)-2\delta- H(Du^\ez(x_0))] }\le \Lambda\frac1\ez  e^{ -\frac{\mu}{\ez}\dz  }.
 \end{align*}
By Lemma \ref{exp} (ii), we also have
$$\int_{V\cap \{H(e_n)-2\delta\le H(Du^\ez)\le H(e_n) \}}[H(e_n)-H(Du^\ez)]\Theta^\ez\,dx\le  2\dz \int_{V } \Theta^\ez\,dx\le C(\lambda,\Lambda)\dz.$$

\medskip
\noindent
{\it Step 2.}
We show that
\begin{align*}
\int_{V}\langle D_p H(Du^\ez),Du^\ez-e_n\rangle^2\Theta^\ez\,dx
&\le   C(\lambda,\Lambda) \tau[1+\frac1\ez  e^{ -\frac{\mu}{\ez}\dz  }].
\end{align*}

Taking $v=(u^\ez-x_n)^2$ in Lemma \ref{L-L}, we have
$$
(u^\ez(x_0)-x_n^0)^2=\int_{B(0,2)}L_\ez ((u^\ez-x_n)^2)\Theta^\ez\,dx
+\int_{\partial B(0,2)}(u^\ez-x_n)^2\rho\,ds
$$
and hence, by \eqref{cond1},
\begin{equation}\label{EQ4.1}0\le \int_{B(0,2)}L_\ez ((u^\ez-x_n)^2)\Theta^\ez\,dx+\tau^2.
\end{equation}
Since $\mathscr{A}_H (u^\ez)+\ez \Delta u^\ez=0$, one has
\begin{align*}
&L_\ez((u^\ez-x_n)^2)\\
&=-H_{p_i}(Du^\ez)H_{p_j}(Du^\ez)[2(u^\ez_{x_j}-\delta_{nj})
(u^\ez_{x_i}-\delta_{ni})
+2(u^\ez-x_n)u^\ez_{x_ix_j}]\\
&\quad-4H_{p_i}(Du^\ez)H_{p_jp_l}(Du^\ez)u^\ez_{x_ix_j}(u^\ez-x_n)
(u^\ez_{x_l}-\delta_{nl}) -\ez[2(u^\ez_{x_i}-\delta_{ni})(u^\ez_{x_i}-\delta_{ni})
+2(u^\ez-x_n)u^\ez_{x_ix_i}]\\
&=-2\langle D_pH(Du^\ez),Du^\ez-e_n\rangle^2-4(u^\ez-x_n)\langle D^2_{pp}
H(Du^\ez)D[H(Du^\ez)],Du^\ez-e_n\rangle-2\ez|Du^\ez-e_n|^2,
\end{align*}
 and hence, by \eqref{cond1},
\begin{align*}
L_\ez((u^\ez-x_n)^2)\le -2\langle D_pH(Du^\ez),Du^\ez-e_n\rangle^2+4\tau|\langle D^2_{pp}H(Du^\ez)D[H(Du^\ez)],Du^\ez-e_n\rangle|.
\end{align*}
By Young's inequality,
\begin{align*}
&|\langle D^2_{pp}H(Du^\ez)D[H(Du^\ez)],Du^\ez-e_n\rangle|\\
&\le \frac12\langle D^2_{pp}H(Du^\ez)D[H(Du^\ez)], D[H(Du^\ez)]\rangle
+  \frac12\langle D^2_{pp}H(Du^\ez)(Du^\ez-e_n), (Du^\ez-e_n)\rangle.
\end{align*}
By the strongly concavity/convexity of $H$, we know that
$$\langle D^2_{pp}H(Du^\ez)(Du^\ez-e_n), (Du^\ez-e_n)\rangle\le \frac\Lambda2|Du^\ez-e_n|^2 $$
and
 $$\langle D_pH(Du^\ez),e_n-Du^\ez\rangle+\frac\lambda2|Du^\ez-e_n|^2\le H(e_n)-H(Du^\ez). $$
 Thus \begin{align*}
L_\ez((u^\ez-x_n)^2)&\le -2\langle D_pH(Du^\ez),Du^\ez-e_n\rangle^2+2 \tau
 \langle D^2_{pp}H(Du^\ez)D[H(Du^\ez)], D[H(Du^\ez)]\rangle\\
&\quad
+ 2 \tau \frac\Lambda\lambda [H(e_n)-H(Du^\ez)]-2 \tau \frac\Lambda\lambda\langle D_pH(Du^\ez),e_n-Du^\ez\rangle  ,\end{align*}

Pluging this in \eqref{EQ4.1}, one gets
\begin{align*}
&\int_{B(0,2)}\langle D_p H(Du^\ez),Du^\ez-e_n\rangle^2\Theta^\ez\,dx\\
&\le  \tau ^2+ 2\tau \int_{B(0,2)}
 \langle D^2_{pp}H(Du^\ez)D[H(Du^\ez)], D[H(Du^\ez)]\rangle\Theta^\ez\,dx\\
 & \quad+2 \tau \frac\Lambda\lambda\int_{B(0,2)} [H(e_n)-H(Du^\ez)]_+\Theta^\ez\,dx\\
 &\quad+  2 \tau\frac\Lambda\lambda \int_{B(0,2)}|\langle D_pH(Du^\ez),e_n-Du^\ez\rangle|  \Theta^\ez\,dx.
\end{align*}

Note that
 $$\int_{B(0,2)}
 \langle D^2_{pp}H(Du^\ez)D[H(Du^\ez)], D[H(Du^\ez)]\rangle\Theta^\ez\,dx\le C(\lambda,\Lambda).$$
By Young's inequality,
\begin{align*}
&2 \tau\frac\Lambda\lambda \int_{B(0,2)}|\langle D_pH(Du^\ez),e_n-Du^\ez\rangle|  \Theta^\ez\,dx\\
&\quad\le
\frac12 \int_{B(0,2)}\langle D_p H(Du^\ez),Du^\ez-e_n\rangle^2\Theta^\ez\,dx + \tau^2(\frac\Lambda\lambda)^2
\int_{B(0,2)}  \Theta^\ez\,dx\\
&\quad\le
\frac12 \int_{B(0,2)}\langle D_p H(Du^\ez),Du^\ez-e_n\rangle^2\Theta^\ez\,dx+ C(\lambda,\Lambda)\tau^2.
\end{align*}

\medskip
\noindent
{\it Step 3.}
 Set
$$v:=\zeta^2|Du^\ez-e_n|^2,$$
where $\zeta\in C^\fz_c(B(0,2))$, $0\le\zeta\le1$ in $B(0,2)$ and $\zeta=1$ in $B(0,1)$.
 Lemma \ref{L-L} gives that
\begin{equation}\label{FL1}
|Du^\ez(x_0)-e_n|^2=\int_{B(0,2)}L_\ez(\zeta^2|Du^\ez-e_n|^2)\Theta^\ez\,dx,
\end{equation}
where we used $\zeta|_{\partial B(0,2)}=0$.
One has
\begin{align}\label{FL2}
L_\ez(\zeta^2|Du^\ez-e_n|^2)&=|Du^\ez-e_n|^2L_\ez(\zeta^2)
+\zeta^2L_\ez(|Du^\ez-e_n|^2)\nonumber\\
&-2H_{p_i}(Du^\ez)H_{p_j}(Du^\ez)(\zeta^2)_{x_i}(|Du^\ez-e_n|^2)_{x_j}
-2\ez(\zeta^2)_{x_i}(|Du^\ez-e_n|^2)_{x_i}.\nonumber
\end{align}

Owing to $L_\ez(u^\ez_s)=0$, we further compute
\begin{align*}
L_\ez(|Du^\ez-e_n|^2)&=-H_{p_i}(Du^\ez)H_{p_j}(Du^\ez)
[2u^\ez_{x_sx_j}u^\ez_{x_sx_i}+2(u^\ez_{x_s}-\delta_{ns})
u^\ez_{x_sx_ix_j}]\\
&\quad-4(u^\ez_s-\delta_{ns}) H_{p_i}(Du^\ez)H_{p_jp_l}(Du^\ez)u^\ez_{x_ix_j}u^\ez_{x_sx_l}\\
 &\quad-\ez [2u^\ez_{x_sx_i}u^\ez_{x_sx_i}+2(u^\ez_s-\delta_{ns})
u^\ez_{x_sx_ix_i}]\\
&=-2|D[H(Du^\ez)]|^2-2\ez|D^2u^\ez|^2.
\end{align*}

Note that
\begin{align*}
L_\ez(\zeta^2)&=-H_{p_i}(Du^\ez)H_{p_j}(Du^\ez)[2\zeta_{x_i}\zeta_{x_j}+2\zeta\zeta_{x_ix_j}]-4\zeta H_{p_i}(Du^\ez)H_{p_jp_l}(Du^\ez)u^\ez_{x_ix_j}\zeta_{x_l}\\
 &\quad -\ez[2\zeta_{x_i}\zeta_{x_i}+2\zeta\zeta_{x_ix_i}]\\
 &\le C|D_pH(Du^\ez)|^2+C\Lambda \zeta|D[H(Du^\ez)]|
 +C\ez
\end{align*}
and hence
$$|Du^\ez-e_n|^2L_\ez(\zeta^2)\le \frac18 \zeta^2|D[H(Du^\ez)]|^2+ C(\lambda,\Lambda)|Du^\ez-e_n|^2[1+ | Du^\ez |^2 ]$$

and
\begin{align*}
&-2H_{p_i}(Du^\ez)H_{p_j}(Du^\ez)(\zeta^2)_{x_i}(|Du^\ez-e_n|^2)_{x_j}-2\ez(\zeta^2)_{x_i}(|Du^\ez-e_n|^2)_{x_i}\\
&\quad\le C|D[H(Du)]||D_pH(Du^\ez)||Du^\ez-e_n|\zeta+C\zeta\ez
 |Du^\ez-e_n||D^2u^\ez|\\
 &\quad\le \frac 18 \zeta^2[|D[H(Du^\ez)]|^2+\ez |D^2u^\ez|^2]+ C(\lambda,\Lambda)|Du^\ez-e_n|^2[1+ | Du^\ez |^2 ] .\end{align*}

%
We conclude that
\begin{align*}
L_\ez(\zeta^2|Du^\ez-e_n|^2) \le C(\lambda,\Lambda)|Du^\ez-e_n|^2[1+ | Du^\ez |^2 ].
\end{align*}
In view of \eqref{FL1}, we conclude that
\begin{equation}\label{FL5}
|Du^\ez(x_0)^2-e_n|^2\le C(\lambda,\Lambda)\|Du^\ez\|_{L^\fz(V)}
\int_{B(0,2)}|Du^\ez-e_n|^2\Theta^\ez\,dx.
\end{equation}
Since $H$ is strongly convex,
\begin{equation*}
H(e_n)\ge H(Du^\ez)+\langle D_pH(Du^\ez),e_n-Du^\ez\rangle+
\frac{\lambda |Du^\ez-e_n|^2}{2}.
\end{equation*}
This implies that
\begin{align*}
\frac{\lambda }{2}\int_{B(0,2)}|Du^\ez-e_n|^2\Theta^\ez\,dx
&\le -\int_{B(0,2)}\langle D_pH(Du^\ez),Du^\ez-e_n\rangle\Theta^\ez\,dx +\int_{B(0,2)}[H(e_n)-H(Du^\ez)]_+\Theta^\ez\,dx\\
&\le  C(\lambda,\Lambda)[\tau+\dz+\frac1\ez  e^{ -\frac{\mu}{\ez}\dz  }].
\end{align*}
The proof of Theorem \ref{Fla} is complete.

\bigskip

 \noindent {\bf Acknowledgment.}
The authors
 would like to thank the supports of
 National Natural Science of Foundation of China (No. 11522102\&11871088).

\end{document}